\newtheorem{theorem}{Theorem}[section]
\newtheorem{lemma}[theorem]{Lemma}
\numberwithin{equation}{section}
\newenvironment{proof}[1][Proof]{\textbf{#1.} }{\ \rule{0.5em}{0.5em}}
\begin{document}
\baselineskip=18pt

\pagenumbering{arabic}

\begin{center}
{\Large {\bf On a Vector-host Epidemic Model with Spatial Structure}}

\vspace{.3in}

Pierre Magal \textsuperscript{1},
G.F. Webb \textsuperscript{2},
Yixiang Wu \textsuperscript{2}

\vspace{.3in}

\begin{footnotesize}
\textbf{1} Mathematics Department, University of Bordeaux, Bordeaux, France
\\
\textbf{2} Mathematics Department, Vanderbilt University, Nashville, TN\\
\end{footnotesize}

\bigskip

\vspace{0.2in}
\end{center}

\begin{abstract}
In this paper, we study a reaction-diffusion vector-host epidemic model. We define the basic reproduction number $R_0$ and show that $R_0$ is a threshold parameter: if $R_0\le 1$ the disease free steady state is globally stable; if $R_0>1$ the model has a unique globally stable positive steady state. Our proof combines arguments from monotone dynamical system theory, persistence theory, and the theory of asymptotically autonomous semiflows.
\end{abstract}

\noindent 2000 Mathematics Subject Classification: 35B40, 35P05, 35Q92.
\medskip

\noindent Keywords: reaction-diffusion, epidemic models, global stability, basic reproduction number.  

\section{Introduction}
In recent years, many authors (e.g. \cite{allen2008asymptotic, capasso1978global, cui2017spatial, cui2017dynamics, deng2016dynamics, fitzgibbon1994diffusive, fitzgibbon1995diffusion, fitzgibbon2004reaction, fitzgibbon2005modelling, fitzgibbon2008simple, fitzgibbon2017outbreak, kuto2017concentration, lai2014repulsion, li2017varying, lou2011reaction, magal2017spatial, peng2012reaction, vaidya2012avian, wang2015dynamics, wang2016reaction, webb1981reaction, yu2016nonlocal}) propose reaction-diffusion models to study the transmission of diseases. Among them, Fitzgibbon et al. \cite{fitzgibbon2004reaction, fitzgibbon2005modelling} apply a reaction-diffusion system on non-coincident domains to describe the circulation of diseases between two hosts; Lou and Zhao \cite{lou2011reaction} propose a reaction-diffusion model with delay and nonlocal term to study the spread of malaria; Vaidya, Wang and Zou \cite{vaidya2012avian} study the transmission of avian influenza in wild birds by a reaction-diffusion model with spatial heterogeneous coefficients. 

Although diffusive epidemic models are studied extensively, they have only recently been used to describe a real world situation \cite{fitzgibbon2017outbreak, magal2017spatial}. In \cite{magal2017spatial}, we simulate the spatial spread of seasonal influenza in Puerto Rico using a diffusive SIR model based on the geographical and population data. In \cite{fitzgibbon2017outbreak}, the authors demonstrate the effectiveness of a diffusive vector-host epidemic model in understanding the Zika outbreak in Rio De Janeiro.  Our objective in this manuscript is to provide a rigorous analysis of the reaction-diffusion model proposed in  \cite{fitzgibbon2017outbreak}. 

Suppose that individuals are living in a bounded domain $\Omega\subset \mathbb{R}^n$ with smooth boundary $\partial\Omega$. 
Let $H_i(x, t), V_u(x, t)$ and $V_i(x, t)$ be the density of infected hosts, uninfected vectors, and infected vectors at position $x$ and time $t$, respectively. Then the model proposed in \cite{fitzgibbon2017outbreak} to study the outbreak of Zika in Rio De Janerio is the following reaction-diffusion system
\begin{equation}
\left\{
\begin{array}{lll} \label{main}
&\frac{\partial}{\partial t}H_i-\triangledown\cdot \delta_1(x)\triangledown H_i=-\lambda(x) H_i+\sigma_1(x) H_u(x)V_i,\ \ \ &x\in\Omega, t>0,\\
&\frac{\partial}{\partial t}V_u-\triangledown\cdot \delta_2(x)\triangledown V_u=-\sigma_2(x)V_u H_i+\beta(x)(V_u+V_i)-\mu(x) (V_u+V_i)V_u,\ \ \ &x\in\Omega, t>0,\\
&\frac{\partial}{\partial t}V_i-\triangledown\cdot \delta_2(x)\triangledown V_i=\sigma_2(x) V_uH_i-\mu(x)(V_u+V_i)V_i,\ \ \ &x\in\Omega, t>0.
\end{array}
\right.
\end{equation}
with homogeneous Neumann boundary condition
\begin{equation}\label{mainb}
\frac{\partial}{\partial n}H_i= \frac{\partial}{\partial n}V_u= \frac{\partial}{\partial n}V_i=0,\ \ \ x\in\partial\Omega, t>0,
\end{equation}
and initial condition
\begin{equation}\label{maini}
\left(H_i(.,0),V_u(.,  0),V_i(x, 0)\right) =\left( H_{i0},V_{u0},V_{i0}(x) \right) \in C(\bar{\Omega}; \mathbb{R}_+^3).
\end{equation}
where $\delta_1, \delta_2 \in C^{1+ \alpha}(\bar{\Omega})$ are strictly positive, the functions $H_u, \lambda, \beta,\sigma_1, \sigma_2$ and $\mu$ are strictly positive and belong to $C(\bar{\Omega})$. 
The flux of new infected human is given by $\sigma_1(x) H_u(x)V_i(t,x)$ in which $H_u(x)$ is the density of susceptible population depending on the spatial location $x$. The main idea of this model is to assume the susceptible human is (almost) not affected by the epidemic during a relatively short period of time and therefore flux of new infected is remaining (almost) constant. Such a  functional response mainly permit to take care of realistic density of population distributed in space. For Zika in Rio De Janerio the number infected is fairly small in comparison with the number of the all population (less than $1\%$ accordingly to \cite{Bastos2016}). Therefore the density of susceptible can be considered to be constant without been altered by the epidemic.       


In section 2, we define the basic reproductive number $R_0$ as the spectral radius of $-CB^{-1}$, i.e. $R_0=r(-CB^{-1})$, where $B:D(B) \subset C(\bar\Omega; \mathbb{R}^2) \to C(\bar\Omega; \mathbb{R}^2)$ and $C: C(\bar\Omega; \mathbb{R}^2) \to C(\bar\Omega; \mathbb{R}^2)$ are linear operators on $C(\bar\Omega; \mathbb{R}^2)$ with
\begin{equation*}
B=
\begin{pmatrix}
\triangledown\cdot\delta_1\triangledown & 0\\
0 & \triangledown\cdot\delta_2\triangledown
\end{pmatrix}+
 \begin{pmatrix}
-\lambda & \sigma_1 H_u \\
0 & -\mu\hat V
\end{pmatrix}\ \ \ \text{ and }\ \ \ 
C=\begin{pmatrix}
0 & 0 \\
\sigma_2\hat V & 0
\end{pmatrix},
\end{equation*}
with the suitable domain $D(B)$ (see \cite{Stewart80,Mora83}). 
 
The equilibria of \eqref{main}-\eqref{maini} are solutions of the following elliptic system:
\begin{equation}\label{maine}
\left\{
\begin{array}{lll}
-\triangledown\cdot \delta_1(x)\triangledown H_i=-\lambda(x) H_i+\sigma_1(x) H_u(x)V_i,\ \ \ &x\in\Omega, \\
-\triangledown\cdot \delta_2(x)\triangledown V_u=-\sigma_2(x)V_u H_i+\beta(x)(V_u+V_i)-\mu(x) (V_u+V_i)V_u,\ \ \ &x\in\Omega,\\
-\triangledown\cdot \delta_2(x)\triangledown V_i=\sigma_2(x) V_uH_i-\mu(x)(V_u+V_i)V_i,\ \ \ &x\in\Omega,\\
\frac{\partial}{\partial n}H_i= \frac{\partial}{\partial n}V_u= \frac{\partial}{\partial n}V_i=0,\ \ \ &x\in\partial\Omega.
\end{array}
\right.
\end{equation}
The system always has one trivial equilibrium $E_0$ and a unique semi-trivial equilibrium $E_1=(0, \hat V, 0)$. In section 2, we prove that $E_1$ is globally asymptotically stable if $R_0<1$ in Theorem \ref{theorem_Rless}.

Our main result is in section 3, where we show that \eqref{main}-\eqref{maini} has a unique globally asymptotically stable positive steady state $E_2=(\hat H_i, \hat V_u, \hat V_i)$ if $R_0>1$ (see Theorem \ref{theorem_Rbigger}). We remark that it is usually not an easy task to prove the global stability of the positive steady state for a three-equation parabolic system when there is no clear Lyapunov type functional.  Our proof combines arguments from monotone dynamical system theory, persistence theory, and the theory of asymptotically autonomous semiflows. 

We briefly summarize our idea of proof here. Adding up the second and third equations in \eqref{main} and letting $V:=V_u+V_i$, $V$ satisfies the diffusive logistic equation $\partial_ t V_u-\triangledown\cdot \delta_2\triangledown V=\beta V-\mu V^2$. Since this equation has a globally stable positive steady state $\hat V$, it is tempting to assume that the dynamics of \eqref{main}-\eqref{maini} is determined by the limit system
\begin{equation}
\left\{
\begin{array}{lll} \label{main_reducedi}
&\frac{\partial}{\partial t}\tilde H_i-\triangledown\cdot \delta_1\triangledown \tilde H_i=-\lambda \tilde H_i+\sigma_1 \tilde H_u\tilde V_i,\ \ \ &x\in\Omega, t>0,\\
&\frac{\partial}{\partial t}\tilde V_i-\triangledown\cdot \delta_2\triangledown \tilde V_i=\sigma_2 (\hat V-\tilde V_i)^+\tilde H_i-\mu\hat V \tilde V_i,\ \ \ &x\in\Omega, t>0.
\end{array}
\right.
\end{equation}
However even for ordinary differential equation (ODE) systems, Thieme \cite{thieme1994asymptotically} gives many examples where the dynamics of the limit and original systems are quite different. A remedy to this is the theory of asymptotically autonomous semiflows (see \cite[Theorem 4.1]{thieme1992convergence}), which is generalized from the well-known theory by Markus on asymptotically autonomous ODE systems. Applying this theory,  to prove the convergence of $(H_i(\cdot, t), V_i(\cdot, t))$, it suffices to show: (a) system \eqref{main_reducedi} has a  unique positive steady state $(\hat H_i, \hat V_i)$; (b) The steady state $(\hat H_i, \hat V_i)$ of \eqref{main_reducedi} is globally stable in $W:=\{(H_{i0}, V_{i0})\in C(\bar\Omega; \mathbb{R}_+^2):  \ H_{i0}+V_{i0}\neq 0\}$; (c) The  $\omega-$limit set of $(H_i(\cdot, t), V_i(\cdot, t))$ intersects $W$.  The proof of (a) is given in section 3.1.1. The proof of (b) is provided in section 3.1.2, where we take advantage of the monotonicity of \eqref{main_reducedi}. To show (c), we use the uniform persistence theory in \cite{hale1989persistence} to obtain $\liminf_{t\rightarrow\infty} \|H_i(\cdot, t)\|_\infty+\|V_i(\cdot, t)\|_\infty \ge \epsilon$ for some $\epsilon>0$ (see Lemma \ref{lemma_persistence}). Interested readers may read the appendix on the ODE system for the idea of the proof first.

In section 4, we prove the global stability of $E_1$ for the critical case $R_0=1$. Here the main difficulty is to prove the local stability of $E_1$ as the linearized system at $E_1$ has principal eigenvalue equaling zero. In section 5, we give some concluding remarks. In particular, we summarize our results on the basic reproduction number $R_0$, which will be presented in a forthcoming paper.   We also remark that our idea is applicable to other models (e.g. \cite{lai2014repulsion, lai2016reaction,  ren2017reaction, pankavich2016mathematical}).



\section{Disease free equilibria}
The objective of this section is to define the basic reproduction number and investigate the stability of the trivial and semi-trivial steady states. The existence, uniqueness, and positivity of global classical solutions of \eqref{main}-\eqref{maini} have been shown in \cite{fitzgibbon2017outbreak}. Let $V=V_u+V_i$. Then $V(x, t)$ satisfies 
 \begin{equation} \label{v}
\left\{
\begin{array}{ll}
V_t-\triangledown\cdot \delta_2(x)\triangledown V=\beta(x) V-\mu(x) V^2,\ \ \ &x\in\Omega, t>0,\\
\frac{\partial}{\partial n}V=0,\ \ \ &x\in\partial\Omega, t>0,\\
V(., 0)=V_0\in C(\bar \Omega; \mathbb{R}_+).
\end{array}
\right.
\end{equation}
The following result about \eqref{v} is well-known (see, e.g., \cite{cantrell2004spatial}).
\begin{lemma}\label{lemma_V}
For any nonnegative nontrivial initial data $V_0\in C(\bar \Omega)$, \eqref{v} has a unique global classic solution $V(x, t)$. Moreover, $V(x, t)>0$ for all $(x, t)\in \bar\Omega\times (0, \infty)$ and
\begin{equation}
\lim_{t\rightarrow +\infty} \|V(\cdot, t)-\hat V\|_\infty =0,
\end{equation}
where $\hat V$ is the unique positive solution of the elliptic problem
\begin{equation} 
\label{vhat}
\left\{
\begin{array}{ll}
-\triangledown\cdot \delta_2(x)\triangledown V=\beta(x) V-\mu(x) V^2,\ \ \ &x \in \Omega,\\
\frac{\partial}{\partial n}V=0,\ \ \ &x\in\partial\Omega.
\end{array}
\right.
\end{equation}
\end{lemma}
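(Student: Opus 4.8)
The plan is to treat \eqref{v} as a semilinear parabolic problem and to obtain the three conclusions—global existence, positivity, and convergence—in that order, the last being the substantive one. First I would establish local well-posedness. The operator $A:=\nabla\cdot\delta_2\nabla$ subject to the Neumann boundary condition generates an analytic $C_0$-semigroup on $C(\bar\Omega)$ (the setting of \cite{Stewart80,Mora83} already invoked in the excerpt), and the reaction term $f(x,V)=\beta(x)V-\mu(x)V^2$ is locally Lipschitz in $V$ uniformly in $x$. Standard semigroup theory then yields a unique mild (hence, by parabolic regularity, classical) solution on a maximal interval $[0,T_{\max})$. To upgrade to global existence I would produce a priori $L^\infty$ bounds by the comparison principle: the constant $\bar M:=\max_{\bar\Omega}(\beta/\mu)$ satisfies $\beta\bar M-\mu\bar M^2\le 0$ pointwise and is therefore a supersolution, while $0$ is a subsolution. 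Hence $0\le V(\cdot,t)\le\max\{\bar M,\|V_0\|_\infty\}$ on $[0,T_{\max})$, which rules out blow-up and forces $T_{\max}=+\infty$.

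Positivity would follow from the strong maximum principle. Writing the equation as $V_t-\nabla\cdot\delta_2\nabla V-c(x,t)V=0$ with $c(x,t)=\beta(x)-\mu(x)V(x,t)$ bounded (by the a priori bound just obtained), and noting that $V_0$ is nonnegative and nontrivial, the parabolic strong maximum principle together with the Hopf lemma for the Neumann condition gives $V(x,t)>0$ for every $(x,t)\in\bar\Omega\times(0,\infty)$.

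The main work is the convergence $\|V(\cdot,t)-\hat V\|_\infty\to 0$, which I would carry out by a monotone sub/supersolution sandwich. The logistic structure makes the semiflow order-preserving, and two ingredients are needed. (a) \emph{Existence and uniqueness of the positive steady state} $\hat V$: existence follows because the principal eigenvalue of $-\nabla\cdot\delta_2\nabla-\beta$ under Neumann conditions is negative (the principal eigenvalue of $-\nabla\cdot\delta_2\nabla$ is $0$, attained at constants, and subtracting the strictly positive $\beta$ pushes it below zero), so the zero state is linearly unstable and an upper-lower solution argument produces a positive equilibrium; uniqueness follows from the standard observation that $f(x,V)/V=\beta-\mu V$ is strictly decreasing in $V$, which precludes two distinct positive equilibria. (b) \emph{The convergence itself}: for $\epsilon>0$ small enough that $\epsilon\le\beta(x)/\mu(x)$ on $\bar\Omega$, the constant $\epsilon$ is a subsolution, so the solution $\underline V(\cdot,t)$ it generates is nondecreasing in $t$ and bounded above, hence increases to a positive equilibrium, necessarily $\hat V$; symmetrically, the solution $\overline V(\cdot,t)$ started from a large constant $M\ge\bar M$ decreases to $\hat V$. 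Given the actual solution, positivity and compactness furnish a time $t_0>0$ and constants $0<\epsilon\le M$ with $\epsilon\le V(\cdot,t_0)\le M$; comparison on $[t_0,\infty)$ then traps $V$ between $\underline V$ and $\overline V$, and since both envelopes converge to $\hat V$ the squeeze yields the claim.

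The step I expect to be the genuine obstacle is ingredient (a)—pinning down existence and, above all, uniqueness of $\hat V$—since the monotone convergence machinery is routine once the unique positive equilibrium is known to exist. The sign of the principal eigenvalue and the strict monotonicity of $V\mapsto f(x,V)/V$ are exactly the hypotheses that render the diffusive logistic equation tractable, and for these facts I would lean on the standard treatment in \cite{cantrell2004spatial}.
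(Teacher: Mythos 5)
Your proposal is correct and, in substance, matches the paper's treatment: the paper gives no proof of this lemma at all, stating only that the result is well known and citing \cite{cantrell2004spatial}, and your argument — constant sub/supersolutions for global bounds, the strong maximum principle for positivity, the negative principal eigenvalue of $-\triangledown\cdot\delta_2\triangledown-\beta$ plus strict monotonicity of $V\mapsto\beta-\mu V$ for existence and uniqueness of $\hat V$, and the squeeze between the monotone orbits started from constant barriers for convergence — is precisely the standard treatment found in that reference. The only detail left implicit is that the monotone-in-time orbits converge to $\hat V$ uniformly on $\bar\Omega$ (via Dini's theorem or parabolic regularity and precompactness), which is routine.
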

By Lemma \ref{lemma_V}, $V_u(x, t)+V_i(x, t)\rightarrow \hat V(x)$ uniformly for $x\in\bar\Omega$ as $t\rightarrow\infty$ if $V_{u0}+V_{i0}\neq 0$.

 As usual, we consider two types of equilibria for \eqref{main}-\eqref{mainb}: Disease Free Equilibrium (DFE) and Endemic Equilibrium (EE).  A nonnegative solution $(\tilde  H_i, \tilde V_u, \tilde V_i)$ of \eqref{maine} is a DFE if $\tilde H_i=\tilde V_i=0$, and otherwise it is an EE. By Lemma \ref{lemma_V}, we must have $\tilde V_u+\tilde V_i=\hat V$ or $\tilde V_u+\tilde V_i=0$. It is then not hard to show that \eqref{main}-\eqref{mainb} has two DFE: trivial equilibrium $E_0=(0, 0, 0)$ and semi-trivial equilibrium $E_1=(0, \hat V, 0)$. We denote the 
 EE by $E_2=(\hat H_i, \hat V_u, \hat V_i)$, which will be proven to be unique if exists. 
\begin{lemma}
 $E_0$  is always unstable.
\end{lemma}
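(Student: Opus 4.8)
The plan is to establish instability by exhibiting an unstable mode in the linearization at $E_0$, exploiting the decoupling already emphasized in the introduction. First I would linearize \eqref{main}--\eqref{mainb} at $E_0=(0,0,0)$. Writing $(h,u,w)$ for the perturbation of $(H_i,V_u,V_i)$, every quadratic term---namely $-\sigma_2 V_uH_i$, $-\mu(V_u+V_i)V_u$, and $-\mu(V_u+V_i)V_i$---vanishes to first order, while the coupling $\sigma_1H_uV_i$ is already linear in $V_i$ and survives. The linearized system is therefore
\begin{align*}
\partial_t h &= \triangledown\cdot\delta_1\triangledown h-\lambda h+\sigma_1 H_u w,\\
\partial_t u &= \triangledown\cdot\delta_2\triangledown u+\beta(u+w),\\
\partial_t w &= \triangledown\cdot\delta_2\triangledown w,
\end{align*}
with homogeneous Neumann boundary conditions.

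Second, I would add the second and third equations and set $v:=u+w$, which obeys the scalar linear equation $\partial_t v=\triangledown\cdot\delta_2\triangledown v+\beta v$ under Neumann conditions---precisely the linearization of the logistic equation \eqref{v} at $0$. The key point is that the principal eigenvalue $\lambda_1$ of $\triangledown\cdot\delta_2\triangledown+\beta$ with Neumann boundary conditions is strictly positive: testing the variational characterization $\lambda_1=\sup_{\phi\neq 0}\big(\int_\Omega(-\delta_2|\triangledown\phi|^2+\beta\phi^2)\big)\big/\int_\Omega\phi^2$ with the constant function $\phi\equiv 1$ gives $\lambda_1\ge(\int_\Omega\beta)/|\Omega|>0$ since $\beta>0$. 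Hence the linearized semigroup has an eigenvalue with positive real part, and by the principle of linearized instability for the analytic semigroups generated by these sectorial operators (as in \cite{Stewart80,Mora83}), $E_0$ is unstable.

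Alternatively, and perhaps more in the spirit of the paper, I would argue directly from Lemma \ref{lemma_V}: for any initial datum with $V_{u0}+V_{i0}\neq 0$, however small, Lemma \ref{lemma_V} forces $V_u(\cdot,t)+V_i(\cdot,t)\to\hat V$ uniformly on $\bar\Omega$, and since $\hat V>0$ the trajectory eventually leaves a fixed neighborhood of $E_0$, contradicting Lyapunov stability. This route bypasses the eigenvalue computation and reuses a result already established.

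I expect no serious obstacle; the statement is genuinely easy. The only point needing slight care is the strict positivity of $\lambda_1$---i.e. confirming that adding a strictly positive zeroth-order term to the Neumann Laplacian pushes the principal eigenvalue above zero---and, if one adopts the nonlinear route instead, being explicit that ``unstable'' is meant in the Lyapunov sense so that Lemma \ref{lemma_V} alone suffices.
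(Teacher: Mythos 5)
Your proposal is correct, and it contains two distinct arguments worth separating. Your first (linearized) route is essentially the paper's proof: the paper linearizes at $E_0$ to get \eqref{eig1}, takes the principal eigenpair $(\tilde\kappa,\phi_0)$ of $\triangledown\cdot\delta_2\triangledown+\beta$ with Neumann conditions, and observes that $(0,\phi_0,0)$ is then an eigenvector of the full linearization with eigenvalue $\tilde\kappa>0$. Your substitution $v:=u+w$ produces exactly this same unstable mode (with $w\equiv 0$ one has $u=v=e^{\lambda_1 t}\phi_0$), so it is the same argument in slightly different packaging; the one thing you add is an explicit justification that the principal eigenvalue is positive, via the Rayleigh quotient tested against $\phi\equiv 1$, a point the paper asserts without comment (it follows there from monotonicity of $\kappa_1(\delta,f)$ in $f$ and $\kappa_1(\delta_2,0)=0$). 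Your second route, via Lemma \ref{lemma_V}, is genuinely different from the paper: starting from $(0,\delta',0)$ with $\delta'>0$ arbitrarily small, the sum $V_u+V_i$ converges uniformly to $\hat V$, whose minimum over $\bar\Omega$ is a fixed positive constant, so every such trajectory leaves a fixed neighborhood of $E_0$, contradicting Lyapunov stability. This nonlinear argument is more elementary and in a sense stronger: it needs no spectral theory and avoids invoking the principle of linearized instability for a parabolic system (which is itself a nontrivial citation), whereas the paper's spectral approach has the advantage of reusing the same eigenvalue machinery that the rest of Section 2 employs for $E_1$.
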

\begin{proof}
We linearize \eqref{main} around $E_0$ by letting $H_i(x, t)=\varphi(x) e^{\kappa t}$, $V_u(x, t)=\phi(x) e^{\kappa t}$, and $V_i(x, t)=\psi(x) e^{\kappa t}$ and ignoring the high order terms:
\begin{equation}
\left\{
\begin{array}{lll}
\kappa\varphi=\triangledown\cdot \delta_1\triangledown \varphi-\lambda \varphi+\sigma_1 H_u\psi,\ \ \ &x\in\Omega, \\
\kappa\phi=\triangledown\cdot \delta_2\triangledown \phi+\beta(\phi+\psi),\ \ \ &x\in\Omega,\\
\kappa\psi=\triangledown\cdot \delta_2\triangledown \psi,\ \ \ &x\in\Omega,\\
\frac{\partial}{\partial n}\varphi= \frac{\partial}{\partial n}\phi= \frac{\partial}{\partial n}\psi=0,\ \ \ &x\in\partial\Omega.
\end{array} \label{eig1}
\right.
\end{equation}
Let $\phi_0>0$ be a positive eigenvector corresponding to the principal eigenvalue $\tilde\kappa$ of the following problem
\begin{equation*}
\left\{
\begin{array}{lll}
\tilde\kappa \phi=\triangledown\cdot \delta_2\triangledown \phi+\beta\phi,\ \ \ &x\in\Omega,\\
\frac{\partial}{\partial n}\phi=0,\ \ \ &x\in\partial\Omega.
\end{array}
\right.
\end{equation*}
Then $\tilde\kappa>0$ is an eigenvalue of \eqref{eig1} with a corresponding eigenvector $(0, \phi_0, 0)$. Therefore $E_0$ is linearly unstable. By the principle of linearized instability, $E_0$ is unstable.
\end{proof}

Linearizing \eqref{main} around $E_1$, we arrive at the following eigenvalue problem:
\begin{equation}
\left\{
\begin{array}{lll} \label{eig}
\kappa\varphi&=\triangledown\cdot \delta_1\triangledown \varphi-\lambda \varphi+\sigma_1 H_u\psi,\ \ \ &x\in\Omega, \\
\kappa\phi&=\triangledown\cdot \delta_2\triangledown \phi-\sigma_2\hat V\varphi+\beta(\phi+\psi)-2\mu\hat V\phi-\mu\hat V\psi,\ \ \ &x\in\Omega,\\
\kappa\psi&=\triangledown\cdot \delta_2\triangledown \psi+\sigma_2\hat V\varphi-\mu \hat V\psi,\ \ \ &x\in\Omega,\\
\frac{\partial}{\partial n}\varphi&= \frac{\partial}{\partial n}\phi= \frac{\partial}{\partial n}\psi=0,\ \ \ &x\in\partial\Omega.
\end{array}
\right.
\end{equation}
Since the second equation of \eqref{eig} is decoupled from the system, we consider the problem
\begin{equation}
\left\{
\begin{array}{lll}  \label{eigg}
\kappa\varphi&=\triangledown\cdot \delta_1\triangledown \varphi-\lambda\varphi+\sigma_1 H_u\psi,\ \ \ &x\in\Omega, \\
\kappa\psi&=\triangledown\cdot \delta_2\triangledown \psi+\sigma_2\hat V\varphi-\mu \hat V\psi,\ \ \ &x\in\Omega,\\
\frac{\partial}{\partial n}\varphi&=\frac{\partial}{\partial n}\psi=0,\ \ \ &x\in\partial\Omega.
\end{array}
\right.
\end{equation}
Problem \eqref{eigg} is cooperative, so it has a principal eigenvalue $\kappa_0$ associated with a positive eigenvector $(\varphi_0, \psi_0)$ (e.g. see \cite{lam2016asymptotic}).

For $\delta\in C^1(\bar\Omega)$ being strictly positive on $\bar\Omega$ and $f\in C(\bar\Omega)$, let $\kappa_1(\delta, f)$ be the principal eigenvalue of 
\begin{equation} \label{EVP}
\left\{
\begin{array}{ll}
\kappa\phi=\triangledown\cdot \delta(x)\triangledown \phi+f \phi,\ \ \ &x\in\Omega,\\
\frac{\partial}{\partial n}\phi=0,\ \ \ &x\in\partial\Omega.
\end{array}
\right.
\end{equation}
It is well known that  $\kappa_1(\delta, f)$ is the only eigenvalue associated with a positive eigenvector, and it is monotone in the sense that if $f_1\ge(\neq) f_2$ then $\kappa_1(\delta, f_1)>\kappa_2(\delta, f_2)$.

\begin{lemma}
$E_1$ is locally asymptotically stable if $\kappa_0<0$ and unstable if $\kappa_0>0$.
\end{lemma}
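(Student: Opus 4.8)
The plan is to exploit the triangular structure already visible in the eigenvalue problem \eqref{eig}. Note that the perturbation $\phi$ of the uninfected-vector component appears only in the second equation, while the first and third equations involve solely $(\varphi,\psi)$. Thus, writing $A$ for the linear operator obtained by linearizing \eqref{main}--\eqref{mainb} at $E_1$ (on the space $C(\bar\Omega;\mathbb{R}^3)$ with domain determined as in the references cited for $B$), and ordering the unknowns as $(\varphi,\psi,\phi)$, the operator $A$ is block lower-triangular,
\begin{equation*}
A=\begin{pmatrix}\mathcal{L}_1 & 0\\ M & \mathcal{L}_2\end{pmatrix},
\end{equation*}
where $\mathcal{L}_1$ is the cooperative $2\times 2$ operator governing \eqref{eigg}, $\mathcal{L}_2\phi:=\triangledown\cdot\delta_2\triangledown\phi+(\beta-2\mu\hat V)\phi$ is a scalar operator of the form \eqref{EVP}, and $M(\varphi,\psi)=-\sigma_2\hat V\varphi+(\beta-\mu\hat V)\psi$ is the coupling. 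Because the system $(A-\kappa)u=g$ can be solved triangularly, $(A-\kappa)^{-1}$ exists precisely when both $(\mathcal{L}_1-\kappa)^{-1}$ and $(\mathcal{L}_2-\kappa)^{-1}$ do, so $\sigma(A)=\sigma(\mathcal{L}_1)\cup\sigma(\mathcal{L}_2)$ and hence the spectral bound satisfies $s(A)=\max\{s(\mathcal{L}_1),\,s(\mathcal{L}_2)\}$. I would reduce the stability question to the signs of these two spectral bounds.

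The next step is to show that the $\phi$-direction is \emph{always} stable, i.e. $s(\mathcal{L}_2)=\kappa_1(\delta_2,\beta-2\mu\hat V)<0$. The key observation is that $\hat V$ solves \eqref{vhat}, which rearranges to $\triangledown\cdot\delta_2\triangledown\hat V+(\beta-\mu\hat V)\hat V=0$. Since $\hat V>0$ on $\bar\Omega$ by Lemma \ref{lemma_V}, this exhibits $\hat V$ as a positive eigenfunction of the operator in \eqref{EVP} with potential $f=\beta-\mu\hat V$ and eigenvalue $0$; by uniqueness of the eigenvalue admitting a positive eigenvector, $\kappa_1(\delta_2,\beta-\mu\hat V)=0$. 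As $\mu$ and $\hat V$ are strictly positive, we have $\beta-\mu\hat V\ge(\neq)\beta-2\mu\hat V$, so the strict monotonicity of $\kappa_1(\delta,\cdot)$ quoted after \eqref{EVP} yields $\kappa_1(\delta_2,\beta-2\mu\hat V)<\kappa_1(\delta_2,\beta-\mu\hat V)=0$.

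It remains to identify $s(\mathcal{L}_1)$ with $\kappa_0$. Since $\mathcal{L}_1$ is the cooperative operator of \eqref{eigg}, its spectral bound is exactly its principal eigenvalue $\kappa_0$, which by construction dominates the real parts of all its spectral values. Combining the two blocks gives $s(A)=\max\{\kappa_0,\,\kappa_1(\delta_2,\beta-2\mu\hat V)\}$. If $\kappa_0<0$ then $s(A)<0$ and $E_1$ is locally asymptotically stable, while if $\kappa_0>0$ then $s(A)=\kappa_0>0$ and $E_1$ is unstable. The conclusion then follows from the principle of linearized (in)stability, exactly as was invoked for $E_0$.

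I expect the main obstacle to be the functional-analytic bookkeeping rather than the eigenvalue computations. One must verify that $A$ is sectorial and generates an analytic semigroup on the chosen space, so that the spectral bound $s(A)$ actually controls the growth bound of the semigroup and linearized stability applies; and one must justify rigorously that $M$ is relatively bounded with respect to $\mathcal{L}_2$ so that the triangular resolvent identity $\sigma(A)=\sigma(\mathcal{L}_1)\cup\sigma(\mathcal{L}_2)$ is valid (not merely for the point spectrum). Granting the standard theory for such reaction–diffusion operators, the remaining estimates are the elementary eigenvalue comparisons above.
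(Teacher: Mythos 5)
Your proposal is correct and takes essentially the same route as the paper: both exploit the block-triangular structure of \eqref{eig}, reduce the stability question to the cooperative block \eqref{eigg} together with a scalar problem in $\phi$, show the scalar block is stable using $\kappa_1(\delta_2,\beta-\mu\hat V)=0$ (from $\hat V>0$ solving \eqref{vhat}) plus monotonicity of the principal eigenvalue, and conclude by the principle of linearized (in)stability. The paper implements your spectral identity concretely rather than abstractly---any eigenvalue of \eqref{eig} is an eigenvalue of \eqref{eigg} or of the decoupled scalar problem, and conversely a positive eigenvector of \eqref{eigg} with $\kappa_0>0$ is extended to an eigenvector of \eqref{eig} by solving the $\phi$-equation via the Fredholm alternative---which is precisely the ``only if'' half of the identity $\sigma(A)=\sigma(\mathcal{L}_1)\cup\sigma(\mathcal{L}_2)$ that you flag as requiring justification.
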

\begin{proof}
Noticing that $\hat V$ is a positive solution of \eqref{vhat}, we have $\kappa_1(\delta_2, \beta-\mu \hat V)=0$. Therefore, $\kappa_1(\delta_2, -\sigma_2\hat V+\beta-2\mu\hat V)<0$.

Suppose $\kappa_0<0$. Let $\kappa$ be an eigenvalue of \eqref{eig}. Then $\kappa$ is an eigenvalue of either \eqref{eigg} or
the following eigenvalue problem:
 \begin{equation*}
 \left\{
\begin{array}{lll}
\kappa\phi&=\triangledown\cdot \delta_2\triangledown \phi-\sigma_2\hat V\phi+\beta\phi-2\mu\hat V\phi,\ \ \ &x\in\Omega,\\
\frac{\partial}{\partial n}\phi&=0,\ \ \ &x\in\partial\Omega.
\end{array}
\right.
\end{equation*}
Since $\kappa_0<0$ and $\kappa_1(\delta_2, -\sigma_2\hat V+\beta-2\mu\hat V)<0$,  we have $\mathbb{R} \kappa<0$. Since $\kappa$ is arbitrary,  $E_1$ is linearly stable. By the principle of linearized stability, $E_1$ is locally asymptotically stable.

Suppose $\kappa_0>0$. Let $(\varphi_0, \psi_0)$ be a positive eigenvector associated with $\kappa_0$. By  $\kappa_1(\delta_2, -\sigma_2\hat V+\beta-2\mu\hat V)<0$ and the Fredholm alternative, the following problem has a unique solution $\phi_0$:
 \begin{equation*}
 \left\{
\begin{array}{lll}
\kappa_0 \phi&=\triangledown\cdot \delta_2\triangledown \phi-\sigma_2\hat V\varphi+\beta(\phi+\psi_0)-2\mu\hat V\phi-\mu\hat V\psi_0,\ \ \ &x\in\Omega,\\
\frac{\partial}{\partial n}\phi&=0,\ \ \ &x\in\partial\Omega.
\end{array}
\right.
\end{equation*}
Hence \eqref{eig} has an eigenvector $(\varphi_0, \phi_0, \psi_0)$ corresponding to eigenvalue $\kappa_0>0$. So $E_1$ is linearly unstable. By the principle of linearized instability, $E_1$ is unstable.
\end{proof}

We adopt the approach of \cite{thieme2009spectral, wang2012basic} to define the basic reproduction number of \eqref{main}.
Let $B: [C(\bar\Omega)]^2 \rightarrow [C(\bar\Omega)]^2$ be the operator such that
$$D(B):= \left\{(\varphi, \psi)\in C^2(\bar\Omega; \mathbb{R}^2): \frac{\partial }{\partial n}\varphi=\frac{\partial}{\partial n}\psi=0 \ \text{ on } \partial\Omega \right \}$$
and
\begin{equation*}
B(\varphi, \psi)=
\begin{pmatrix}
\triangledown\cdot\delta_1\triangledown\varphi\\
\triangledown\cdot\delta_2\triangledown\psi
\end{pmatrix}+
 \begin{pmatrix}
-\lambda & \sigma_1 H_u \\
0 & -\mu\hat V
\end{pmatrix}
\begin{pmatrix}
\varphi\\
\psi
\end{pmatrix}, \ \ (\varphi, \psi)\in D(B).
\end{equation*}
Define
\begin{equation*}
C=\begin{pmatrix}
0 & 0 \\
\sigma_2\hat V & 0
\end{pmatrix}.
\end{equation*}
Let $A=B+C$. Then $A$ and $B$ are resolvent positive (see \cite{thieme2009spectral} for the definition), and $A$ is a positive perturbation of $B$. It is easy to check that the spectral bound of $B$ is negative, i.e. $S(B)<0$. By  \cite[Theorem 3.5]{thieme2009spectral}, $\kappa_0=S(A)$ has the same sign with $r(-CB^{-1})-1$, where $r(-CB^{-1})$ is the spectral radius of $-CB^{-1}$. Then we define the {\em basic reproduction number} $R_0$ by
$$
 R_0=r(-CB^{-1}).
$$
We immediately have the following result:
\begin{lemma}\label{lemma_R}
$R_0-1$ and $\kappa_0$ have the same sign.  Moreover, $E_1$ is locally asymptotically stable if $R_0<1$ and unstable if $R_0>1$.
\end{lemma}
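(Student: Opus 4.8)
The plan is to read off both assertions from the spectral machinery assembled immediately above the statement, so the argument is essentially bookkeeping. First I would record the identification $\kappa_0 = S(A)$. The cooperative eigenvalue problem \eqref{eigg} is exactly the eigenvalue equation $A(\varphi,\psi) = \kappa(\varphi,\psi)$ for $A = B + C$; since $A$ is resolvent positive and \eqref{eigg} is cooperative, the Krein--Rutman theory for such operators guarantees that the principal eigenvalue $\kappa_0$ (the one carrying the positive eigenvector $(\varphi_0,\psi_0)$) is precisely the spectral bound $S(A)$. This is the one place where I would take a little care, since one must confirm that the principal eigenvalue of the elliptic system coincides with the spectral bound of the realization of $A$ on $[C(\bar\Omega)]^2$.

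Next I would verify the hypotheses of \cite[Theorem 3.5]{thieme2009spectral}. The operator $B$ is block upper triangular with scalar diagonal parts $\triangledown\cdot\delta_1\triangledown - \lambda$ and $\triangledown\cdot\delta_2\triangledown - \mu\hat V$, so $S(B) = \max\{\kappa_1(\delta_1, -\lambda),\, \kappa_1(\delta_2, -\mu\hat V)\}$. Because $\lambda$ and $\mu\hat V$ are strictly positive on $\bar\Omega$, the monotonicity of $\kappa_1$ recorded above yields $\kappa_1(\delta_1, -\lambda) < \kappa_1(\delta_1, 0) = 0$ and $\kappa_1(\delta_2, -\mu\hat V) < \kappa_1(\delta_2, 0) = 0$ (the Neumann operator with zero potential has principal eigenvalue $0$, realized by constants), so $S(B) < 0$. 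With $B$ resolvent positive and $C \ge 0$ a positive perturbation, Thieme's theorem then applies and gives that $S(A)$ and $r(-CB^{-1}) - 1 = R_0 - 1$ have the same sign. Combined with $\kappa_0 = S(A)$, this is the first assertion.

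For the ``moreover'' clause I would simply invoke the previous lemma, which states that $E_1$ is locally asymptotically stable when $\kappa_0 < 0$ and unstable when $\kappa_0 > 0$. The sign equivalence just established turns $\kappa_0 < 0$ into $R_0 < 1$ and $\kappa_0 > 0$ into $R_0 > 1$, which is exactly the stated dichotomy.

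The main obstacle here is not a computation but a matter of matching definitions cleanly: pinning down that the principal eigenvalue $\kappa_0$ of the cooperative system genuinely equals the spectral bound $S(A)$, and checking that every hypothesis of \cite[Theorem 3.5]{thieme2009spectral} (resolvent positivity of $A$ and $B$, the sign $S(B) < 0$, and positivity of the perturbation $C$) is in force so that the sign-preservation conclusion transfers verbatim to $\kappa_0$ and $R_0 - 1$.
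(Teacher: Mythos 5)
Your proposal is correct and follows exactly the paper's route: the paper presents this lemma with no separate proof, stating that it is immediate from the preceding paragraph, where $\kappa_0=S(A)$ is identified, $S(B)<0$ is asserted, and \cite[Theorem 3.5]{thieme2009spectral} is invoked to give the sign equivalence, with the stability dichotomy then read off from the preceding lemma on $\kappa_0$. Your only addition is to spell out the details the paper leaves as ``easy to check,'' namely the computation $S(B)=\max\{\kappa_1(\delta_1,-\lambda),\kappa_1(\delta_2,-\mu\hat V)\}<0$ via the triangular structure of $B$ and the identification of the principal eigenvalue of the cooperative system with the spectral bound of $A$.
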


We then consider the global dynamics of the model when $R_0<1$.
\begin{theorem}\label{theorem_Rless}
If $\mathcal{R}_0<1$, then $E_1$ is globally asmyptototically stable, i.e. $E_1$ is locally  stable and, for any  initial data $(H_{i0}, V_{u0}, V_{i0})\in C(\bar\Omega; \mathbb{R}_+^3)$ with $V_{u0}+V_{i0}\neq 0$, we have
\begin{equation}
\lim_{t\rightarrow\infty}\|(H_i(\cdot, t), V_u(\cdot, t), V_i(\cdot, t))-E_1\|_\infty=0.
\end{equation}
\end{theorem}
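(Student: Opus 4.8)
The plan is to take local asymptotic stability for granted—it is exactly the content of Lemma \ref{lemma_R} when $R_0<1$, since then $\kappa_0<0$—and to concentrate on global attractivity of $E_1$. Because $V_{u0}+V_{i0}\neq 0$, Lemma \ref{lemma_V} applies to $V:=V_u+V_i$ and gives $\|V(\cdot,t)-\hat V\|_\infty\to 0$. By the spectral-continuity step below I would first fix a small $\epsilon>0$; since $\hat V$ is continuous and strictly positive on the compact set $\bar\Omega$, I may assume $\hat V-\epsilon>0$, and there is then $T>0$ with $\hat V(x)-\epsilon\le V(x,t)\le \hat V(x)+\epsilon$ for all $x\in\bar\Omega$ and $t\ge T$.

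The first step is to trap the infected components $(H_i,V_i)$ by a linear cooperative comparison system. Using $0\le V_u\le V\le \hat V+\epsilon$ in the $H_i$-equation (already linear in the coupling variable $V_i$), and using $V_u\le \hat V+\epsilon$ together with $V=V_u+V_i\ge \hat V-\epsilon$ in the $V_i$-equation so that the nonlinear term $-\mu V V_i$ is bounded above by $-\mu(\hat V-\epsilon)V_i$, I obtain for $t\ge T$
\begin{equation*}
\left\{
\begin{array}{l}
\partial_t H_i-\triangledown\cdot\delta_1\triangledown H_i=-\lambda H_i+\sigma_1 H_u V_i,\\
\partial_t V_i-\triangledown\cdot\delta_2\triangledown V_i\le \sigma_2(\hat V+\epsilon) H_i-\mu(\hat V-\epsilon) V_i.
\end{array}
\right.
\end{equation*}
The right-hand sides are cooperative (the off-diagonal coefficients $\sigma_1 H_u$ and $\sigma_2(\hat V+\epsilon)$ are nonnegative), so the parabolic comparison principle for cooperative systems applies to the associated linear problem with coefficient matrix $M_\epsilon$. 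It is essential to retain the mortality term $-\mu(\hat V-\epsilon)V_i$ rather than discard it, because the negativity of the relevant principal eigenvalue depends on it; this is precisely why both the upper and lower bounds on $V$ from Lemma \ref{lemma_V} are needed.

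Next I would invoke the spectral input. At $\epsilon=0$ the matrix $M_\epsilon$ is exactly the coefficient matrix of the cooperative eigenvalue problem \eqref{eigg}, whose principal eigenvalue is $\kappa_0<0$. By continuous dependence of the principal eigenvalue of a cooperative elliptic system on its zeroth-order coefficients, the principal eigenvalue $\kappa_0(\epsilon)$ of the perturbed problem satisfies $\kappa_0(\epsilon)\to\kappa_0$ as $\epsilon\to 0$; hence, after possibly shrinking $\epsilon$ (and enlarging $T$), I may assume $\kappa_0(\epsilon)<0$ with a strictly positive eigenvector $(\varphi_\epsilon,\psi_\epsilon)$. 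Choosing $C>0$ large enough that $C(\varphi_\epsilon,\psi_\epsilon)\ge (H_i(\cdot,T),V_i(\cdot,T))$—possible since the eigenvector is bounded away from zero—the function $(\bar H_i,\bar V_i)(x,t)=C\,e^{\kappa_0(\epsilon)(t-T)}(\varphi_\epsilon(x),\psi_\epsilon(x))$ solves the cooperative comparison system exactly and dominates $(H_i,V_i)$ at $t=T$. The comparison principle then yields $0\le (H_i,V_i)(\cdot,t)\le (\bar H_i,\bar V_i)(\cdot,t)$ for $t\ge T$, and since $\kappa_0(\epsilon)<0$ the dominating pair decays to $0$ uniformly, so $\|H_i(\cdot,t)\|_\infty+\|V_i(\cdot,t)\|_\infty\to 0$.

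Finally, $V_i\to 0$ together with $V_u+V_i\to\hat V$ forces $V_u(\cdot,t)\to\hat V$ uniformly, so $(H_i,V_u,V_i)(\cdot,t)\to(0,\hat V,0)=E_1$; combined with the local asymptotic stability from Lemma \ref{lemma_R}, this gives global asymptotic stability of $E_1$. I expect the main obstacle to be the spectral-continuity step—verifying that the principal eigenvalue of the cooperative system varies continuously under the $O(\epsilon)$ perturbation of its potential so that it remains negative—together with the careful bookkeeping of inequality directions that makes the cooperative comparison principle applicable: the $H_i$-equation is kept as an equality, while only the $V_i$-equation is relaxed to an inequality in the sign-correct direction.
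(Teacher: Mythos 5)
Your proposal is correct and follows essentially the same path as the paper's own proof: local stability from Lemma \ref{lemma_R}, the two-sided bound $\hat V-\epsilon<V_u+V_i<\hat V+\epsilon$ from Lemma \ref{lemma_V}, the cooperative differential inequality for $(H_i,V_i)$ with coefficients $\sigma_2(\hat V+\epsilon)$ and $-\mu(\hat V-\epsilon)$, and comparison with the exactly-solvable exponential supersolution built from the perturbed principal eigenpair $(\kappa_\epsilon,\varphi_\epsilon,\psi_\epsilon)$ with $\kappa_\epsilon<0$. The only difference is presentational: you spell out the continuity of the principal eigenvalue in the zeroth-order coefficients, which the paper invokes implicitly when it asserts that $\epsilon$ can be chosen small enough that $\kappa_\epsilon<0$.
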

\begin{proof}
By Lemma \ref{lemma_R}, $E_1$ is locally asymptotically stable and $\kappa_0<0$. Then we can choose $\epsilon>0$ small such that the following eigenvalue problem
\begin{equation*}
\left\{
 \begin{array}{lll} \label{eigg2}
\kappa\varphi&=\triangledown\cdot \delta_1\triangledown \varphi-\lambda \varphi+\sigma_1 H_u\psi,\ \ \ &x\in\Omega, \\
\kappa\psi&=\triangledown\cdot \delta_2\triangledown \psi+\sigma_2(\hat V+\epsilon)\varphi-\mu (\hat V-\epsilon)\psi,\ \ \ &x\in\Omega,\\
\frac{\partial}{\partial n}\varphi&=\frac{\partial}{\partial n}\psi=0,\ \ \ &x\in\partial\Omega,
\end{array}
\right.
\end{equation*}
has a principal eigenvalue $\kappa_{\epsilon}<0$ with a corresponding positive eigenvector $(\varphi_\epsilon, \psi_\epsilon)$. By \eqref{main} and Lemma \ref{lemma_V}, we know that $V_u(x, t)+V_i(x, t)\rightarrow\hat V(x)$ uniformly on $\bar\Omega$  as $t\rightarrow\infty$ . Hence there exists $t_0>0$ such that $\hat V(x)-\epsilon<V_u(x, t)+V_i(x, t)<\hat V(x)+\epsilon$ for $x\in\bar\Omega$ and $t>t_0$. It then follows that
\begin{equation*}
\left\{
 \begin{array}{lll}
\frac{\partial}{\partial t}H_i-\triangledown\cdot \delta_1\triangledown H_i&=-\lambda H_i+\sigma_1 H_u(x)V_i,\ \ \ &x\in\Omega, t>t_0,\\
\frac{\partial}{\partial t}V_i-\triangledown\cdot \delta_2\triangledown V_i&\le\sigma_2(\hat V+\epsilon)H_i-\mu(\hat V-\epsilon)V_i,\ \ \ &x\in\Omega, t>t_0.
\end{array}
\right.
\end{equation*}
So $(H_i, V_i)$ is a lower solution of the following problem
\begin{equation}
\left\{
\begin{array}{lll} \label{comp}
&\frac{\partial}{\partial t}\hat H_i-\triangledown\cdot \delta_1\triangledown \hat H_i=-\lambda \hat H_i+\sigma_1 H_u\hat V_i,\ \ \ &x\in\Omega, t>t_0,\\
&\frac{\partial}{\partial t}\hat V_i-\triangledown\cdot \delta_2\triangledown \hat V_i =\sigma_2(\hat V+\epsilon)\hat H_i-\mu(\hat V-\epsilon)\hat V_i,\ \ \ &x\in\Omega, t>t_0,\\
&\frac{\partial}{\partial n}\hat H_i=\frac{\partial}{\partial n}\hat V_i=0, \ \ \ &x\in\partial\Omega, t>t_0,\\
&\hat H_i(x, t_0)=M\varphi_\epsilon(x),\ \ \ \ \hat V_i(x, t_0)=M\psi_\epsilon(x), \ \ \ &x\in\Omega,
\end{array}
\right.
\end{equation}
where $M$ is large such that $H_i(x, t_0)\le\hat H_i(x, t_0)$ and $V_i(x, t_0)\le\hat V_i(x, t_0)$. By the comparison principle for cooperative systems (e.g. \cite{smith1995monotone}), $H_i(x, t)\le \hat H_i(x, t)$ and  $V_i(x, t)\le \hat V_i(x, t)$ for all $x\in\bar\Omega$ and $t\ge t_0$. It is easy to check that the unique solution of the linear problem \eqref{comp} is $(\hat H_i(x, t), \hat V_i(x, t))=(M\varphi(x) e^{\kappa_\epsilon t}, M\psi(x) e^{\kappa_\epsilon t})$. Since $\kappa_\epsilon<0$, we have $\hat H_i(x, t)\rightarrow 0$ and $\hat V_i(x, t)\rightarrow 0$ uniformly for $x\in\bar\Omega$ as $t\rightarrow\infty$. Hence $H_i(x, t)\rightarrow 0$ and $V_i(x, t)\rightarrow 0$ uniformly for $x\in\bar\Omega$ as $t\rightarrow\infty$. By $V_{u0}+V_{i0}\neq 0$ and Lemma \ref{lemma_V}, $V_u(\cdot, t)+V_i(\cdot, t)\rightarrow\hat V$ in $C(\bar\Omega)$. So we have $V_u(x, t)\rightarrow\hat V(x)$  uniformly for $x\in\bar\Omega$ as $t\rightarrow\infty$.
\end{proof}

\section{Global dynamics when $R_0>1$}
The objective in this section is to prove the convergence of solutions of \eqref{main}-\eqref{maini} to the unique positive steady state when $R_0>1$.
\subsection{The limit problem}
By Lemma \ref{lemma_V}, we have $V_u(\cdot, t)+V_i(\cdot, t)\rightarrow \hat V$ in $C(\bar\Omega)$ as $t\rightarrow \infty$ if $V_{u0}+V_{i0}\neq 0$. This suggests us to study the following limit problem of \eqref{main}-\eqref{maini}:
\begin{equation}
\left\{
\begin{array}{lll} \label{main_reduced}
&\frac{\partial}{\partial t}H_i-\triangledown\cdot \delta_1\triangledown H_i=-\lambda H_i+\sigma_1 H_uV_i,\ \ \ &x\in\Omega, t>0,\\
&\frac{\partial}{\partial t}V_i-\triangledown\cdot \delta_2\triangledown V_i=\sigma_2 (\hat V-V_i)^+H_i-\mu\hat V V_i,\ \ \ &x\in\Omega, t>0,\\
&\frac{\partial}{\partial n}H_i = \frac{\partial}{\partial n}V_i=0,\ \ \ &x\in\partial\Omega, t>0,\\
&H_i(x, 0)=H_{i0}(x),\ V_i(x, 0)=V_{i0}(x),\ \ \ &x\in\Omega.
\end{array}
\right.
\end{equation}
The steady states of \eqref{main_reduced} are nonnegative solutions of the problem:
\begin{equation}
\left\{
\begin{array}{lll} \label{mainep}
-\triangledown\cdot \delta_1\triangledown H_i=-\lambda H_i+\sigma_1 H_uV_i,\ \ \ &x\in\Omega, \\
-\triangledown\cdot \delta_2\triangledown V_i=\sigma_2 (\hat V-V_i)^+H_i-\mu\hat V V_i,\ \ \ &x\in\Omega,\\
\frac{\partial}{\partial n}H_i= \frac{\partial}{\partial n}V_i=0,\ \ \ &x\in\partial\Omega.
\end{array}
\right.
\end{equation}
Clearly $(0, 0)$ is a steady state. In this section, we prove that if a positive steady state of \eqref{main_reduced} exists, it is globally stable in $\{(H_{i0}, V_{i0}):  H_{i0}+V_{i0}\neq 0\}$.

\subsubsection{Uniqueness of positive steady state}
In the following lemmas, we prove the uniqueness of the positive steady state of \eqref{main_reduced}.
 
\begin{lemma}\label{lemma_po}
If $(\hat H_i, \hat V_i)$ is a nontrivial nonnegative steady state, then $\hat H_i(x), \hat V_i(x)>0$ for all $x\in\bar\Omega$ and $\hat V_i(x_0)<\hat V(x_0)$ for some  $x_0\in\overline\Omega$.
\end{lemma}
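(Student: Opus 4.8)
The plan is to prove the three assertions in sequence: first that neither component is identically zero, then that both are strictly positive on $\bar\Omega$, and finally that $\hat V_i$ drops strictly below $\hat V$ somewhere. Before applying any pointwise tool I would record a regularity remark: the reaction term $(\hat V-\hat V_i)^+$ is only Lipschitz in $\hat V_i$, but it is continuous, so elliptic regularity upgrades a nonnegative solution of \eqref{mainep} to $\hat H_i,\hat V_i\in C^{1,\alpha}(\bar\Omega)$, which is ample for the strong maximum principle and Hopf's lemma.

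Step one rules out the semi-degenerate cases. If $\hat H_i\equiv 0$, the first equation of \eqref{mainep} forces $\sigma_1 H_u\hat V_i\equiv 0$, and since $\sigma_1,H_u>0$ this gives $\hat V_i\equiv 0$, contradicting nontriviality; symmetrically, if $\hat V_i\equiv 0$ then $(\hat V-\hat V_i)^+=\hat V>0$ and the second equation forces $\sigma_2\hat V\hat H_i\equiv 0$, hence $\hat H_i\equiv 0$, again a contradiction. So $\hat H_i\not\equiv 0$ and $\hat V_i\not\equiv 0$.

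Step two is the positivity, which I would obtain by treating the two equations as separate scalar problems rather than as a coupled system. Writing the first equation as $-\triangledown\cdot\delta_1\triangledown\hat H_i+\lambda\hat H_i=\sigma_1 H_u\hat V_i$, the right-hand side is nonnegative and, by step one, not identically zero, while the zeroth-order coefficient $\lambda$ is nonnegative. Hence if $\hat H_i$ vanished at an interior point it would attain there its minimum value $0\le 0$, and the strong maximum principle would force $\hat H_i\equiv 0$, a contradiction; thus $\hat H_i>0$ in $\Omega$, and Hopf's lemma together with the Neumann condition excludes a boundary zero, giving $\hat H_i>0$ on $\bar\Omega$. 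The identical argument applied to $-\triangledown\cdot\delta_2\triangledown\hat V_i+\mu\hat V\hat V_i=\sigma_2(\hat V-\hat V_i)^+\hat H_i\ge 0$, whose zeroth-order coefficient $\mu\hat V$ is nonnegative, yields $\hat V_i>0$ on $\bar\Omega$.

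Step three produces the point $x_0$. Integrating the second equation of \eqref{mainep} over $\Omega$ and using the divergence theorem together with the Neumann boundary condition kills the diffusion term, leaving $\int_\Omega\sigma_2(\hat V-\hat V_i)^+\hat H_i\,dx=\int_\Omega\mu\hat V\hat V_i\,dx$. The right-hand side is strictly positive, since $\mu,\hat V>0$ and $\hat V_i>0$ by step two; therefore $(\hat V-\hat V_i)^+$ cannot vanish identically, i.e. there is a point $x_0$ where $\hat V_i<\hat V$. I expect the only delicate part to be step two: one must phrase the maximum principle correctly for each scalar equation, getting the sign of the zeroth-order coefficient and of the source right, and handle the merely Lipschitz nonlinearity through the preliminary regularity upgrade; after that, steps one and three are routine.
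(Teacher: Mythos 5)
Your proof is correct and follows essentially the same route as the paper: nontriviality of both components from the equations, the strong maximum principle (with Hopf's lemma at the Neumann boundary) for strict positivity, and the structure of the second equation to force $(\hat V-\hat V_i)^+\not\equiv 0$. The only cosmetic difference is that you integrate the second equation directly and use the positivity of $\hat V_i$, whereas the paper assumes $\hat V_i\ge \hat V$ on all of $\bar\Omega$, reduces the equation to $-\triangledown\cdot\delta_2\triangledown\hat V_i=-\mu\hat V\hat V_i$, and concludes $\hat V_i\equiv 0$ as a contradiction --- the same computation in contrapositive form.
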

\begin{proof}
Since $(\hat H_i, \hat V_i)$ is nontrivial, $\hat H_i\neq 0$ or $\hat V_i\neq 0$. Noticing $(\lambda-\triangledown\cdot \delta_1\triangledown)\hat H_i=\sigma_1H_u \hat V_i$, we must have $\hat H_i\neq 0$ and $\hat V_i\neq 0$. By the maximum principle, we have $\hat H_i(x), \hat V_i(x)>0$ for all $x\in\bar\Omega$. Assume to the contrary that $\hat V_i(x)\ge \hat V(x)$ for all $x\in\bar\Omega$, then 
$$
-\triangledown\cdot \delta_2\triangledown \hat V_i=\sigma_2 (\hat V-\hat V_i)^+\hat H_i-\mu\hat V \hat V_i=-\mu\hat V \hat V_i.
$$
This implies $\hat V_i=0$, which is a contradiction. 
\end{proof}

By the previous lemma, any nontrivial nonnegative steady state must be positive.  For any $C_1, C_2>0$, define 
$$
S=\{V_i\in C(\bar\Omega; \mathbb{R}_+): \ \ \|V_i\|_\infty \le C_1 \text{ and } V_i(x_0)<\hat V(x_0) \text{ for some } x_0\in\bar\Omega\},
$$ 
and $f: S\subset C(\bar\Omega) \rightarrow C(\bar\Omega)$ by
$$
f(V_i)=(C_2-\triangledown\cdot\delta_2\triangledown)^{-1}\left[ \sigma_2(\hat V-V_i)^+ (\lambda-\triangledown\cdot\delta_1\triangledown)^{-1}\sigma_1H_uV_i + (C_2-\mu\hat V)V_i  \right], \  \  V_i\in S.
$$

\begin{lemma}
If $(\hat H_i, \hat V_i)$ is a positive steady state, then there exists $C_1^*>0$ such that $\hat V_i$ is a nontrivial fixed point of $f$ for all $C_1>C_1^*$ and $C_2>0$. 
\end{lemma}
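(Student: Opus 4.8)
The plan is to recognize that the identity $f(\hat V_i)=\hat V_i$ is simply an equivalent reformulation of the steady-state system \eqref{mainep}: one eliminates $\hat H_i$ by inverting the first equation and then rewrites the second equation as a resolvent identity. Consequently the heart of the argument is a direct substitution, and the only points requiring attention are that the resolvent operators defining $f$ make sense on $C(\bar\Omega)$ and that $\hat V_i$ genuinely belongs to $S$.

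First I would record that, because $\lambda>0$ and $C_2>0$, the Neumann elliptic operators $\lambda-\triangledown\cdot\delta_1\triangledown$ and $C_2-\triangledown\cdot\delta_2\triangledown$ are invertible on $C(\bar\Omega)$ with bounded, positive inverses (the positivity of $\lambda$ and $C_2$ pushes the relevant principal eigenvalue strictly below zero, and positivity of the inverse follows from the maximum principle). Hence $f$ is well defined on $S$. Next, from the first equation of \eqref{mainep} I would solve for the host component, $\hat H_i=(\lambda-\triangledown\cdot\delta_1\triangledown)^{-1}\sigma_1 H_u\hat V_i$. Adding $C_2\hat V_i$ to both sides of the second equation of \eqref{mainep} gives $(C_2-\triangledown\cdot\delta_2\triangledown)\hat V_i=\sigma_2(\hat V-\hat V_i)^+\hat H_i+(C_2-\mu\hat V)\hat V_i$; applying $(C_2-\triangledown\cdot\delta_2\triangledown)^{-1}$ and substituting the expression for $\hat H_i$ reproduces exactly the definition of $f$, so that $\hat V_i=f(\hat V_i)$. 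Since this computation is valid for every $C_2>0$, the fixed-point identity holds for all $C_2>0$.

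It then remains to check $\hat V_i\in S$ and that the fixed point is nontrivial. Taking $C_1^*:=\|\hat V_i\|_\infty$ ensures $\|\hat V_i\|_\infty\le C_1$ whenever $C_1>C_1^*$, and this threshold does not depend on $C_2$, as the statement demands. By Lemma \ref{lemma_po}, $\hat V_i>0$ on $\bar\Omega$, so $\hat V_i\in C(\bar\Omega;\mathbb{R}_+)$ is nontrivial, and $\hat V_i(x_0)<\hat V(x_0)$ for some $x_0\in\bar\Omega$, which is precisely the last defining property of $S$. Therefore $\hat V_i\in S$ and $f(\hat V_i)=\hat V_i\neq 0$.

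I do not anticipate a serious obstacle: once the resolvents are in place the verification is purely algebraic. The single delicate point is the well-definedness and positivity of $(\lambda-\triangledown\cdot\delta_1\triangledown)^{-1}$ and $(C_2-\triangledown\cdot\delta_2\triangledown)^{-1}$ on $C(\bar\Omega)$, which is exactly where the hypotheses $\lambda>0$ and $C_2>0$ enter and which legitimizes the substitution rather than leaving it merely formal.
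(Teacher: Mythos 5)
Your proposal is correct and follows essentially the same route as the paper: solve the first steady-state equation for $\hat H_i$ via the resolvent $(\lambda-\triangledown\cdot\delta_1\triangledown)^{-1}$, substitute into the second equation rewritten as a resolvent identity with $C_2$, and invoke Lemma \ref{lemma_po} to place $\hat V_i$ in $S$ once $C_1>\|\hat V_i\|_\infty$. The paper's own proof is just a terser version of exactly this argument; your additional checks (well-definedness and positivity of the resolvents, the explicit choice $C_1^*=\|\hat V_i\|_\infty$) are details the paper leaves implicit.
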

\begin{proof}
By the first equation of \eqref{mainep}, $\hat H_i=(\lambda-\triangledown\cdot\delta_1\triangledown)^{-1}\sigma_1H_u \hat V_i$. Substituting it into the second equation, we obtain
$$
-\triangledown\cdot \delta_2\triangledown \hat V_i=\sigma_2 (\hat V-V_i)^+(\lambda-\triangledown\cdot\delta_1\triangledown)^{-1}\sigma_1H_u \hat V_i-\mu\hat V \hat V_i.
$$
By Lemma \ref{lemma_po},  $V_i$ is a nontrivial fixed point of $f$ if $C_1$ is large. 
\end{proof}

\begin{lemma}
For any $C_1>0$, there exists $C_2^*>0$ such that $f$ is monotone for all $C_2>C_2^*$ in the sense that $f(V_i)\le f(\hat V_i)$ for all $V_i, \hat V_i\in S$ with $V_i\le \hat V_i$. 
\end{lemma}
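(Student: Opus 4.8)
The plan is to factor the map as a composition $f = K \circ g$, where
\[
K := (C_2-\triangledown\cdot\delta_2\triangledown)^{-1}, \qquad T := (\lambda-\triangledown\cdot\delta_1\triangledown)^{-1}\sigma_1 H_u,
\]
and $g(V_i) := \sigma_2\,(\hat V-V_i)^+\,T V_i + (C_2-\mu\hat V)V_i$. The point of this splitting is that for every $C_2>0$ the resolvent $K$ is a \emph{positive} operator: since $0$ is the principal eigenvalue of $-\triangledown\cdot\delta_2\triangledown$ under Neumann conditions, $C_2-\triangledown\cdot\delta_2\triangledown$ is invertible and its inverse preserves the cone $C(\bar\Omega;\mathbb{R}_+)$ by the maximum principle. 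Likewise $T$ is a bounded positive operator, because $\lambda>0$ gives $\kappa_1(\delta_1,-\lambda)<0$, so $(\lambda-\triangledown\cdot\delta_1\triangledown)^{-1}$ is positive, and multiplication by $\sigma_1 H_u>0$ is positive. Because $K$ is order preserving, it suffices to show that $g$ itself is monotone: for $u\le v$ in $S$ I must establish $g(u)\le g(v)$ pointwise, after which applying $K$ yields $f(u)\le f(v)$.

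The main obstacle is the product term $\sigma_2(\hat V-V_i)^+\,T V_i$: as $V_i$ increases the factor $(\hat V-V_i)^+$ is non-increasing while $T V_i$ is non-decreasing, so the product is \emph{not} manifestly monotone. To handle it I would write the difference $g(v)-g(u)$ and decompose the product difference as
\[
(\hat V-v)^+\bigl(Tv-Tu\bigr) + \bigl[(\hat V-v)^+-(\hat V-u)^+\bigr]Tu .
\]
The first bracket is $\ge 0$, since $T$ is positive and $u\le v$ give $Tv\ge Tu$, while $(\hat V-v)^+\ge0$. The second bracket is $\le 0$, but it is controllable: the map $s\mapsto(\hat V-s)^+$ is $1$-Lipschitz, so $\bigl|(\hat V-v)^+-(\hat V-u)^+\bigr|\le v-u$, and $Tu\ge0$ with $\|Tu\|_\infty\le\|T\|\,\|u\|_\infty\le\|T\|\,C_1$ using $u\in S$. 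I note one subtlety forced by the \emph{nonlocal} nature of $T$: even at points where $u=v$, the values $Tu$ and $Tv$ differ; this causes no harm precisely because $Tv\ge Tu$ there keeps the first bracket nonnegative.

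Collecting the estimates and using $\sigma_2>0$ and $v-u\ge0$, I obtain the pointwise bound
\[
g(v)-g(u)\ \ge\ \bigl[C_2-\|\mu\hat V\|_\infty-\|\sigma_2\|_\infty\,\|T\|\,C_1\bigr](v-u).
\]
Hence, setting $C_2^* := \|\mu\hat V\|_\infty+\|\sigma_2\|_\infty\,\|T\|\,C_1$, for every $C_2>C_2^*$ the bracket is positive and $g(v)-g(u)\ge0$. Applying the positive operator $K$ gives $f(u)=Kg(u)\le Kg(v)=f(v)$, which is the desired monotonicity. The only quantitative input needed is the operator norm $\|T\|$, for which the maximum principle yields the explicit bound $\|T\|\le\|\sigma_1 H_u\|_\infty/\min_{\bar\Omega}\lambda$, making $C_2^*$ fully explicit in terms of the data and $C_1$; I expect the Lipschitz control of the second bracket, absorbed by the large linear term $(C_2-\mu\hat V)V_i$, to be the crux of the argument.
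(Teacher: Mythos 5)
Your proof is correct and takes essentially the same route as the paper's: the same decomposition of the product difference into a nonnegative part plus a term controlled by the Lipschitz bound $|(\hat V-v)^+-(\hat V-u)^+|\le v-u$, absorbed by the linear term $(C_2-\mu\hat V)V_i$, and then the positivity of $(C_2-\triangledown\cdot\delta_2\triangledown)^{-1}$ to conclude. The only difference is cosmetic: where the paper cites elliptic estimates for the boundedness of $\{(\lambda-\triangledown\cdot\delta_1\triangledown)^{-1}\sigma_1 H_u V_i:\ V_i\in S\}$, you use the explicit operator-norm bound $\|T\|\le\|\sigma_1 H_u\|_\infty/\min_{\bar\Omega}\lambda$, which makes $C_2^*$ explicit.
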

\begin{proof}
It suffices to prove that $f(V_i)\le f(V_i+h)$ for any $V_i\in S$ and $0\le h \le C_1- V_i$. Define 
$$
\tilde f(V_i)= \sigma_2(\hat V-V_i)^+ (\lambda-\triangledown\cdot\delta_1\triangledown)^{-1}\sigma_1H_uV_i + (C_2-\mu\hat V)V_i. 
$$
 Then, we have
\begin{equation*}
\begin{split}
&\tilde f(V_i+h) -\tilde f(V_i)= \sigma_2( (\hat V-V_i-h)^+ - (\hat V-V_i)^+ )(\lambda-\triangledown\cdot\delta_1\triangledown)^{-1}\sigma_1H_uV_i  \\
&\hspace{3.7cm} +\sigma_2(\hat V-V_i-h)^+(\lambda-\triangledown\cdot\delta_1\triangledown)^{-1}\sigma_1H_uh+ (C_2-\mu\hat V)h\\
&\hspace{3.7cm} \ge [-\sigma_2(\lambda-\triangledown\cdot\delta_1\triangledown)^{-1}\sigma_1H_uV_i+C_2-\mu\hat V]h,
\end{split}
\end{equation*}
where we used 
$$
|(\hat V-V_i-h)^+ - (\hat V-V_i)^+| \le h.
$$ 
By the elliptic estimate,  the following set is bounded: 
$$
\{ (\lambda-\triangledown\cdot\delta_1\triangledown)^{-1}\sigma_1H_uV_i, \ \ V_i\in S\}.
$$ 
Hence, $\tilde f(V_i+h)-\tilde f(V_i)\ge 0$ if $C_2$ is large. Therefore,  $f(V_i+h)-f(V_i)\ge 0$, and $f$ is monotone. 
\end{proof}

For any $f_1, f_2\in C(\bar\Omega)$, we say $f_1<<f_2$ if $f_1(x)<f_2(x)$ for all $x\in\bar\Omega$.
\begin{lemma}
For any $k\in (0, 1)$ and $V_i\in S$ with $V_i>> 0$, $kf(V_i)<<f(k V_i)$.
\end{lemma}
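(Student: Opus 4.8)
The plan is to exploit the factorization $f = (C_2 - \triangledown\cdot\delta_2\triangledown)^{-1}\circ \tilde f$ together with the strong positivity of the resolvent $(C_2 - \triangledown\cdot\delta_2\triangledown)^{-1}$, so that the strict inequality $kf(V_i) \ll f(kV_i)$ follows from a merely nonnegative, nontrivial inequality at the level of $\tilde f$. Since this resolvent is linear, it suffices to show that $\tilde f(kV_i) - k\tilde f(V_i) \ge 0$ and $\tilde f(kV_i) - k\tilde f(V_i) \neq 0$; applying $(C_2 - \triangledown\cdot\delta_2\triangledown)^{-1}$, which by the maximum principle for the Neumann problem maps nonnegative nontrivial functions to strictly positive ones, then yields $f(kV_i) - kf(V_i) \gg 0$.

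The computation of the difference is the heart of the matter. Writing $L_1 := (\lambda - \triangledown\cdot\delta_1\triangledown)^{-1}$ and expanding both terms, the linear part $(C_2 - \mu\hat V)V_i$ scales exactly by $k$ and therefore cancels, leaving
$$\tilde f(kV_i) - k\tilde f(V_i) = k\sigma_2\left[(\hat V - kV_i)^+ - (\hat V - V_i)^+\right]L_1(\sigma_1 H_u V_i).$$
Here I use the homogeneity $L_1(\sigma_1 H_u (kV_i)) = kL_1(\sigma_1 H_u V_i)$ and pull the factor $k$ out front. The nonlinearity is thus entirely concentrated in the factor $(\hat V - kV_i)^+ - (\hat V - V_i)^+$, and the whole question reduces to the sign and nontriviality of this factor.

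For the sign, since $V_i \gg 0$ and $0 < k < 1$ we have $kV_i < V_i$ pointwise, hence $\hat V - kV_i > \hat V - V_i$; as $t \mapsto t^+$ is nondecreasing, $(\hat V - kV_i)^+ - (\hat V - V_i)^+ \ge 0$ everywhere, while the remaining factors $k$, $\sigma_2$, and $L_1(\sigma_1 H_u V_i)$ are nonnegative (indeed $L_1(\sigma_1 H_u V_i) \gg 0$ by strong positivity of $L_1$). For nontriviality I will invoke the defining property of $S$: there is a point $x_0$ with $V_i(x_0) < \hat V(x_0)$, so by continuity $V_i < \hat V$ on a neighborhood $U$ of $x_0$. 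On $U$ one has $kV_i < V_i < \hat V$, so both positive parts are active and the bracket equals $(\hat V - kV_i) - (\hat V - V_i) = (1-k)V_i > 0$ there. Hence $\tilde f(kV_i) - k\tilde f(V_i)$ is nonnegative on $\bar\Omega$ and strictly positive on $U$, so it is nontrivial, and the conclusion $kf(V_i) \ll f(kV_i)$ follows by applying the strongly positive resolvent as above.

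I expect the only delicate point to be the nontriviality step: one must use the membership $V_i \in S$ (the existence of $x_0$ with $V_i(x_0) < \hat V(x_0)$) rather than assume $V_i \le \hat V$ globally, and check that it is exactly on the region where $V_i \le \hat V$ that the positive-part difference is genuinely positive. Everything else --- cancellation of the linear term, monotonicity of $t \mapsto t^+$, and strong positivity of the two resolvents via the maximum principle --- is routine.
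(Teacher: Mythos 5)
Your proof is correct and follows essentially the same route as the paper's: both reduce the strict inequality to nonnegativity plus nontriviality of $\tilde f(kV_i)-k\tilde f(V_i)$ (using that the linear term scales exactly and the positive-part factor is monotone, strictly so at a point where $V_i<\hat V$), and then conclude via strong positivity of $(C_2-\triangledown\cdot\delta_2\triangledown)^{-1}$. Your version merely makes the cancellation formula and the neighborhood argument more explicit than the paper does.
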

\begin{proof}
By the definition of $S$, there exists $x_0\in\bar\Omega$ such that $\hat V(x_0)> V_i(x_0)$. So $(\hat V(x_0)- V_i(x_0))^+<(\hat V(x_0)-k V_i(x_0))^+$ and $(\hat V(x)- V_i(x))^+\le (\hat V(x)-k V_i(x))^+$ for all $x\in\bar\Omega$. It then follows that $k\tilde f(V_i)(x_0)<\tilde f(kV_i)(x_0)$ and $k\tilde f(V_i)\le \tilde f(kV_i)$. The assertion now just follows from the fact that $(C_2-\triangledown\cdot\delta_2\triangledown)^{-1}$ is strongly positive (i.e. if $g\in C(\bar\Omega)$ such that $g\ge 0$ and $g(x_0)>0$ for some $x_0\in\bar\Omega$, then $(C_2-\triangledown\cdot\delta_2\triangledown)^{-1}g>>0$).
\end{proof}

\begin{lemma}\label{lemma_unique}
The positive steady state, if exists, is unique.
\end{lemma}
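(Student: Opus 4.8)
The plan is to exploit the three structural properties of the map $f$ isolated in the preceding lemmas---that for suitably large constants $C_1, C_2$ every positive steady state is a nontrivial fixed point of $f$ in $S$, that $f$ is monotone, and that $f$ is strictly sublinear in the sense $kf(V_i)<<f(kV_i)$ for $k\in(0,1)$ and $V_i>>0$---and to run the classical uniqueness argument for monotone, strongly subhomogeneous operators. No new analytic estimates should be needed; everything reduces to a comparison between two fixed points.

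First I would fix the constants once and for all. Suppose $(\hat H_i^{(1)}, \hat V_i^{(1)})$ and $(\hat H_i^{(2)}, \hat V_i^{(2)})$ are two positive steady states, and write $V_1=\hat V_i^{(1)}$, $V_2=\hat V_i^{(2)}$. By Lemma \ref{lemma_po} both $V_1, V_2>>0$ on the compact set $\bar\Omega$ and each takes a value strictly below $\hat V$ somewhere, so they both lie in $S$ as soon as $C_1>\max\{\|V_1\|_\infty, \|V_2\|_\infty, C_1^*\}$; choosing in addition $C_2>C_2^*$ makes $f$ monotone on $S$. With these common choices $V_1$ and $V_2$ are both nontrivial fixed points of the one map $f$.

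Next comes the core comparison. I would set $\kappa=\sup\{k>0:\ V_1\ge kV_2\}$. Since $V_1, V_2$ are continuous and strictly positive on the compact $\bar\Omega$, this supremum is finite and positive and is attained, so $V_1\ge\kappa V_2$. The goal is to show $\kappa\ge 1$. Suppose instead $\kappa\in(0,1)$. Then $\kappa V_2\in S$ (its sup-norm lies below that of $V_2$, and $\kappa V_2\le V_2$ preserves the requirement that $V_2(x_0)<\hat V(x_0)$ for some $x_0$) and $\kappa V_2>>0$, so applying monotonicity followed by strict sublinearity gives
\begin{equation*}
V_1=f(V_1)\ge f(\kappa V_2)>>\kappa f(V_2)=\kappa V_2 .
\end{equation*}
Thus $V_1(x)-\kappa V_2(x)>0$ for every $x\in\bar\Omega$; by continuity this difference attains a positive minimum $m>0$ on $\bar\Omega$, and since $V_2$ is bounded above, setting $\epsilon=m/\max_{\bar\Omega}V_2$ yields $V_1\ge(\kappa+\epsilon)V_2$, contradicting the maximality of $\kappa$. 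Hence $\kappa\ge 1$, i.e. $V_1\ge V_2$. Interchanging the roles of $V_1$ and $V_2$ gives $V_2\ge V_1$, so $V_1=V_2$. Substituting back into the first equation of \eqref{mainep}, $\hat H_i=(\lambda-\triangledown\cdot\delta_1\triangledown)^{-1}\sigma_1 H_u\hat V_i$ is then also uniquely determined, which yields uniqueness of the full positive steady state.

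The main obstacle I anticipate is essentially book-keeping rather than a deep difficulty: one must verify that both candidate fixed points genuinely live in a single domain $S$ for one common pair $(C_1,C_2)$, so that monotonicity and sublinearity may be invoked simultaneously, and one must upgrade the strict pointwise inequality $V_1>>\kappa V_2$ to the uniform bound $V_1\ge(\kappa+\epsilon)V_2$. It is precisely here that the strict positivity of $V_2$ on the compact $\bar\Omega$---and hence its being bounded away from $0$---is used; without compactness the supremum defining $\kappa$ could fail to be attained or fail to be strictly positive, and the argument would break down.
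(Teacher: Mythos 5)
Your proof is correct and follows essentially the same route as the paper: both exploit the fixed-point, monotonicity, and strict sublinearity lemmas for $f$ and run the maximal-scaling comparison $kf(V)\,{<<}\,f(kV)\le f(V')=V'$. The only cosmetic difference is how the contradiction is extracted---the paper lets the scaled function touch the other fixed point at some $x_0$ and contradicts the strict inequality there, while you bump the critical ratio up by $\epsilon$ (using compactness of $\bar\Omega$) and then invoke symmetry; both are valid implementations of the same argument.
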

\begin{proof}
Suppose to the contrary that $(H_i^1, V_i^1)$ and $(H_i^2, V_i^2)$ are two distinct positive steady states. Then $V_i^1\neq V_i^2$ by the first equation of \eqref{mainep}.  Without loss of generality, we may assume $V_i^1\not\le V_i^2$. Define
$$
k=\max\{\tilde k\ge 0: \ \ \tilde k V_i^1\le V_i^2 \}.
$$
Then $k\in (0, 1)$. By the definition of $k$, $ k V_i^1\le V_i^2$ and $k V_i^1(x_0)= V_i^2(x_0)$ for some $x_0\in\bar\Omega$. We can choose $C_1$ and $C_2$ such that they are fixed points of $f$, i.e. $f(V^1_i)=V^1_i$ and $f(V^2_i)=V^2_i$. By the previous lemmas, we have
$$
kV^1_i=kf(V^1_i)<< f(k V^1_i)\le f(V^2_i)=V^2_i.
$$
Thus $kV^1_i<<V^2_i$, which contradicts $k V_i^1(x_0)= V_i^2(x_0)$.
\end{proof}

\subsubsection{Global stability of positive steady state}
Let $F_1(H_i, V_i)=-\lambda H_i+\sigma_1 H_u V_i$ and $F_2(H_i, V_i)=\sigma_2 (\hat V-V_i)^+H_i-\mu \hat V V_i$. Noticing $\partial F_1/\partial V_i\ge 0$ and $\partial F_2/\partial H_i\ge 0$,  system \eqref{main_reduced} is cooperative. Let $\tilde \Phi(t): C(\bar\Omega; \mathbb{R}^2)\rightarrow C(\bar\Omega; \mathbb{R}^2)$ be the semiflow induced by the solution of \eqref{main_reduced}, i.e. $\tilde \Phi(t)(H_{i0}, V_{i0})=(H_i(\cdot, t), V_i(\cdot, t))$ for all $t\ge 0$. Then $\tilde \Phi(t)$ is monotone (e.g. see \cite{smith1995monotone}). 

\begin{lemma}\label{lemma_positive}
For any nonnegative nontrivial initial data $(H_{i0}, V_{i0})$, the solution of \eqref{main_reduced} satisfies that $H_i(x, t)>0$ and $V_i(x, t)>0$ for all $x\in\bar\Omega$ and $t>0$.
\end{lemma}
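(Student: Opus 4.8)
The plan is to first establish that the solution stays in the nonnegative cone, and then upgrade this to strict positivity by applying the scalar parabolic strong maximum principle to each equation separately. Nonnegativity is essentially built into the structure: writing the reaction terms as $F_1(H_i,V_i)=-\lambda H_i+\sigma_1 H_u V_i$ and $F_2(H_i,V_i)=\sigma_2(\hat V-V_i)^+H_i-\mu\hat V V_i$, we have $F_1(0,V_i)=\sigma_1 H_u V_i\ge 0$ for $V_i\ge 0$ and $F_2(H_i,0)=\sigma_2\hat V H_i\ge 0$ for $H_i\ge 0$, since $H_u,\sigma_1,\sigma_2>0$ and $\hat V>0$. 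Thus the reaction is quasi-positive, the nonnegative cone $C(\bar\Omega;\mathbb{R}_+^2)$ is positively invariant under $\tilde\Phi(t)$ (consistent with the monotonicity already noted), and so $H_i(\cdot,t)\ge 0$ and $V_i(\cdot,t)\ge 0$ for all $t\ge 0$.

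For strict positivity I would rewrite each equation as a single linear uniformly parabolic equation with a nonnegative source by moving the linear decay term to the left:
$$
\frac{\partial}{\partial t}H_i-\triangledown\cdot\delta_1\triangledown H_i+\lambda H_i=\sigma_1 H_u V_i\ge 0,\qquad
\frac{\partial}{\partial t}V_i-\triangledown\cdot\delta_2\triangledown V_i+\mu\hat V V_i=\sigma_2(\hat V-V_i)^+H_i\ge 0.
$$
Each has bounded continuous coefficients, a continuous nonnegative right-hand side, and the homogeneous Neumann boundary condition, so the parabolic strong maximum principle together with Hopf's lemma applies (the latter handling boundary points, where a vanishing normal derivative rules out a boundary minimum unless the solution is constant). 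Suppose $H_i(x_1,t_1)=0$ for some $x_1\in\bar\Omega$ and $t_1>0$; then $H_i\equiv 0$ on $\bar\Omega\times[0,t_1]$, and substituting back into its equation forces $\sigma_1 H_u V_i\equiv 0$, hence $V_i\equiv 0$ on $\bar\Omega\times[0,t_1]$. In particular $H_{i0}=V_{i0}=0$, contradicting nontriviality. Symmetrically, if $V_i(x_2,t_2)=0$ for some $t_2>0$, then $V_i\equiv 0$ on $\bar\Omega\times[0,t_2]$, so $(\hat V-V_i)^+=\hat V>0$ there and the equation forces $\sigma_2\hat V H_i\equiv 0$, hence $H_i\equiv 0$, again contradicting nontriviality. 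Therefore $H_i(x,t)>0$ and $V_i(x,t)>0$ for all $x\in\bar\Omega$ and $t>0$.

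The step requiring the most care is the propagation of positivity through the coupling. The coefficient $\sigma_2(\hat V-V_i)^+$ in the $V_i$ equation can vanish (precisely where $V_i\ge\hat V$), so the usual shortcut of invoking strong positivity for an \emph{irreducible} cooperative system is not directly available. The scalar-equation argument above circumvents this difficulty: the only scenario we must exclude is a vanishing set of $V_i$, and on $\{V_i\equiv 0\}$ the positive part collapses to $\hat V>0$, which restores the coupling and yields the contradiction. A secondary technical point is that $(\hat V-V_i)^+$ is merely Lipschitz, so the source is continuous but not $C^1$; this does not affect the maximum principle, which needs only a continuous nonnegative source, but it means one should invoke the version of the strong maximum principle valid for strong (or classical) solutions with continuous coefficients.
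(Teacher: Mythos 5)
Your proof is correct, but it is organized differently from the paper's. The paper argues forward in time with a case split on the initial data: if $V_{i0}\neq 0$, the comparison principle applied to the $V_i$-equation (whose source $\sigma_2(\hat V-V_i)^+H_i$ is nonnegative) gives $V_i>0$ for $t>0$, and this strict positivity is then fed into the $H_i$-equation as a strictly positive source; if $V_{i0}=0$, it first gets $H_i>0$ from $H_{i0}\neq 0$, then uses continuity of $V_i$ at $t=0$ to guarantee $(\hat V-V_i)^+>0$ on a short interval $(0,t_0]$, which makes the source of the $V_i$-equation strictly positive there, and finally propagates $V_i>0$ past $t_0$ with the supersolution inequality $\partial_t V_i-\triangledown\cdot \delta_2\triangledown V_i\ge -\mu\hat V V_i$. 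You instead run a contradiction argument based on the backward-in-time rigidity of the parabolic strong maximum principle (with Hopf's lemma handling Neumann boundary points): a zero of either component at a positive time forces that component to vanish identically on the entire earlier cylinder, and then the equation itself forces the other component to vanish, contradicting nontriviality of the data. Your route avoids both the case analysis and the small-time continuity argument, and it handles the degeneracy of the coefficient $(\hat V-V_i)^+$ especially cleanly, since on the set where $V_i\equiv 0$ the positive part collapses to $\hat V>0$ and the coupling is restored; the paper's route is more constructive, exhibiting explicitly how positivity propagates from one component to the other by comparison with solutions of linear problems. Both arguments rest on the same underlying scalar maximum-principle tools, so the difference is one of architecture rather than substance, and your version is, if anything, the more streamlined of the two.
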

\begin{proof}
By the comparison principle for cooperative systems, $H_i(x, t)\ge 0$ and $V_i(x, t)\ge 0$ for all $x\in\bar\Omega$ and $t\ge 0$. 
Suppose $V_{i0}\neq 0$. Noticing 
\begin{equation}\label{e11}
\frac{\partial}{\partial t}V_i-\triangledown\cdot \delta_2\triangledown V_i\ge -\mu\hat V V_i
\end{equation}
and by the comparison principle, we have $V_i(x, t)>0$ for all $x\in\bar\Omega$ and $t>0$. By
$$
\frac{\partial}{\partial t}H_i-\triangledown\cdot \delta_1\triangledown H_i > -\lambda H_i,
$$
and the comparison principle, $H_i(x, t)>0$ for all $x\in \bar\Omega$ and $t>0$.  

Suppose  $V_{i0}= 0$. Since $(H_{i0}, V_{i0})$ is nontrivial, we have $H_{i0}\neq 0$. By
$$
\frac{\partial}{\partial t}H_i-\triangledown\cdot \delta_1\triangledown H_i \ge -\lambda H_i,
$$
and the comparison principle, we have $H_i(x, t)>0$ for all $x\in \bar\Omega$ and $t>0$.  By the continuity  of $V_i(x, t)$ and $V_i(x, 0)=0$,  $(\hat V-V_i(x, t))^+>0$ for all $(x, t)\in \bar\Omega\times (0, t_0]$ for some $t_0>0$. Then by
\begin{equation*}
\frac{\partial}{\partial t}V_i-\triangledown\cdot \delta_2\triangledown V_i> -\mu\hat V V_i, \ \ \ x\in\bar\Omega, t\in (0, t_0]
\end{equation*}
and the comparison principle, we have $V_i(x, t)>0$ for all $(x, t)\in\bar\Omega\times (0, t_0]$. Finally by \eqref{e11}, we have $V_i(x, t)>0$ for all $x\in\bar\Omega$ and $t>0$.
\end{proof}

\begin{lemma}
For any nonnegative initial data $(H_{i0}, V_{i0})$, there exists $M>0$ such that the solution of \eqref{main_reduced} satisfies
$$
0\le H_i(x, t), V_i(x, t)\le M, \ \ \text{ for all } x\in\bar\Omega, t>0.
$$
\end{lemma}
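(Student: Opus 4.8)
The plan is to obtain the two upper bounds separately, exploiting the truncation $(\hat V-V_i)^+$ in the $V_i$-equation, which is precisely what prevents a destabilizing positive feedback between $H_i$ and $V_i$. The nonnegativity $0\le H_i,V_i$ is already contained in Lemma \ref{lemma_positive} (or follows from comparison with the zero subsolution, since \eqref{main_reduced} is cooperative), so the real content is the upper bound. I would establish the bound on $V_i$ \emph{first and independently of} $H_i$, and only then feed it into the linear $H_i$-equation.

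For $V_i$: set $\bar V:=\|\hat V\|_\infty$ and $M_2:=\max\{\|V_{i0}\|_\infty,\bar V\}$, and check that the constant function $M_2$ is a supersolution of the second equation of \eqref{main_reduced} for \emph{every} nonnegative $H_i$. Indeed, since $M_2\ge \bar V\ge \hat V(x)$ on $\bar\Omega$, we have $(\hat V-M_2)^+\equiv 0$, so plugging $V_i=M_2$ into the reaction gives $\sigma_2(\hat V-M_2)^+H_i-\mu\hat V M_2=-\mu\hat V M_2\le 0$, while the time derivative and diffusion of the constant vanish; together with $M_2\ge V_{i0}$ this makes $M_2$ a supersolution. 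The scalar comparison principle (applied to the $V_i$-equation with the known coefficient $H_i(\cdot,t)\ge 0$, see \cite{smith1995monotone}) then yields $V_i(x,t)\le M_2$ for all $x\in\bar\Omega$, $t>0$. The essential point is that the truncation switches off the source exactly at the level $V_i=M_2$, so the estimate holds no matter how large $H_i$ is.

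For $H_i$: let $\lambda_*:=\min_{\bar\Omega}\lambda>0$ and use $V_i\le M_2$ to bound the source, $\sigma_1 H_u V_i\le \|\sigma_1 H_u\|_\infty M_2$. Then the constant $M_1:=\max\{\|H_{i0}\|_\infty,\ \|\sigma_1 H_u\|_\infty M_2/\lambda_*\}$ is a supersolution of the first equation, because at $H_i=M_1$ the reaction satisfies $-\lambda M_1+\sigma_1 H_u V_i\le -\lambda_* M_1+\|\sigma_1 H_u\|_\infty M_2\le 0$ by the choice of $M_1$. Comparison gives $H_i\le M_1$, and taking $M=\max\{M_1,M_2\}$ finishes the proof.

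The only delicate point, and the one I would emphasize, is the first step: a crude estimate that discards the $(\cdot)^+$ truncation would leave a cooperative linear system whose reaction matrix has off-diagonal entries $\|\sigma_1 H_u\|_\infty$ and $\|\sigma_2\|_\infty\bar V$, and whose spectral bound is \emph{not} a priori negative, so it would fail to produce a uniform bound. Recognizing that the truncation makes the constant $M_2$ a supersolution uniformly in $H_i$ is exactly what decouples the two estimates; the remainder is a routine application of the comparison principle for scalar parabolic equations with Neumann boundary conditions.
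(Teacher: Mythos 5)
Your proof is correct and follows essentially the same route as the paper: bound $V_i$ first by the constant $\max\{\|V_{i0}\|_\infty,\|\hat V\|_\infty\}$, exploiting that the truncation $(\hat V-V_i)^+$ vanishes at that level uniformly in $H_i$, and then feed this bound into the linear $H_i$-equation to get a constant supersolution with the constant $\|\sigma_1 H_u\|_\infty M_2/\min_{\bar\Omega}\lambda$. The only cosmetic difference is that the paper compares $H_i$ with the solution of an auxiliary linear parabolic problem (which converges to a bounded steady state) rather than directly with a constant supersolution, but the constants and the underlying comparison argument are the same.
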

\begin{proof}
 Let  $M_1=\max\{\|\hat V\|_\infty, \|V_{i0}\|_\infty\}$. By the second equation of \eqref{main_reduced} and the comparison principle, we have $V_i(x, t)\le M_1$ for all $x\in \bar\Omega$ and $t>0$. Then by the first equation of \eqref{main_reduced}, we have
$$
\frac{\partial}{\partial t}H_i-\triangledown\cdot \delta_1(x)\triangledown H_i\le -\lambda(x) H_i+\sigma_1(x) H_u(x)M_1,\ \ \ x\in\Omega, t>0.
$$
So $H_i$ is a lower solution of the problem:
\begin{equation*}
\left\{
\begin{array}{lll} \label{compH}
\frac{\partial}{\partial t}w-\triangledown\cdot \delta_1(x)\triangledown w=-\lambda(x)w+\sigma_1(x) H_u(x)M_1,\ \ \ &x\in\Omega, t>0,\\
\frac{\partial}{\partial n}w = 0,\ \ \ &x\in\partial\Omega, t>0,\\
w(x, 0)=H_{i0}(x),\ \ \ &x\in\Omega.
\end{array}
\right.
\end{equation*}
Let $M_2=\max\{\|\sigma_1\|_\infty\|H_u\|_\infty M_1/\lambda_m,  \  \|H_{i0}\|_\infty \}$, where $\lambda_m=\min\{\lambda(x): x\in\bar\Omega\}$. Then we have $0\le w(x, t)\le M$ for all $(x, t)\in \bar\Omega\times (0, \infty)$. Hence by the comparison principle,  we have $0\le H_i(x, t)\le w(x, t)<M_2$. Therefore, the claim holds for  $M=\max\{M_1, M_2\}$.
\end{proof}

\begin{lemma}\label{lemma_reduced}
If the positive steady state $(\hat H_i, \hat V_i)$ of \eqref{main_reduced} exists, it is globally asymptotically stable, i.e. it is locally  stable and, for any nonnegative nontrivial initial data $(H_{i0}, V_{i0})$,
$$
\lim_{t\rightarrow\infty} H_i(\cdot, t)=\hat H_i \ \text{ and } \ \ \lim_{t\rightarrow\infty} V_i(\cdot, t)=\hat V_i  \ \ \text{ in } C(\bar\Omega).
$$
\end{lemma}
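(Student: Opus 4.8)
The plan is to exploit the monotonicity of the semiflow $\tilde\Phi(t)$ generated by the cooperative system \eqref{main_reduced} and to squeeze an arbitrary orbit between two distinguished orbits, one issuing from a sub-solution and one from a super-solution, both of which converge to $(\hat H_i,\hat V_i)$. The uniqueness of the positive steady state (Lemma \ref{lemma_unique}) is what will force both bounding orbits to share the same limit, and the sub-solution will be engineered so that its orbit is pushed off the trivial state.

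First I would construct the two barriers. Since $R_0>1$, Lemma \ref{lemma_R} gives $\kappa_0>0$, where $\kappa_0$ is the principal eigenvalue of the cooperative problem \eqref{eigg} with positive eigenvector $(\varphi_0,\psi_0)$; note that \eqref{eigg} is exactly the linearization of \eqref{main_reduced} at $(0,0)$, since $(\hat V-V_i)^+=\hat V$ near $V_i=0$. A direct computation shows that for small $\epsilon>0$ the function $\underline{u}_\epsilon:=\epsilon(\varphi_0,\psi_0)$ is a strict lower solution of \eqref{main_reduced}: substituting into the reaction and using $\kappa_0>0$ together with $(\hat V-\epsilon\psi_0)^+=\hat V-\epsilon\psi_0$ leaves a remainder of sign $-\kappa_0$. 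For a super-solution I would take the spatial constant $\bar{u}:=(H^*,V^*)$ with $V^*:=\|\hat V\|_\infty$ and $H^*:=\|\sigma_1H_u\|_\infty V^*/\lambda_m$, where $\lambda_m=\min_{\bar\Omega}\lambda$; then $(\hat V-V^*)^+\equiv 0$ annihilates the coupling in the second equation, while the first inequality reduces to $\lambda H^*\ge\sigma_1H_uV^*$, which holds by the choice of $H^*$. Moreover $k\bar{u}$ is a super-solution for every $k\ge 1$.

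Because \eqref{main_reduced} is cooperative, $t\mapsto\tilde\Phi(t)\underline{u}_\epsilon$ is nondecreasing and $t\mapsto\tilde\Phi(t)(k\bar{u})$ is nonincreasing; both orbits are bounded by the a priori estimate proved above, hence precompact in $C(\bar\Omega;\mathbb{R}^2)$, and a monotone precompact orbit converges to an equilibrium. The increasing orbit stays $\ge\underline{u}_\epsilon>0$, so its limit is a nontrivial, hence (Lemma \ref{lemma_po}) positive, steady state, which equals $(\hat H_i,\hat V_i)$ by uniqueness. Choosing $k$ so large that $k\bar{u}\ge(\hat H_i,\hat V_i)$, monotonicity gives $\tilde\Phi(t)(k\bar{u})\ge\tilde\Phi(t)(\hat H_i,\hat V_i)=(\hat H_i,\hat V_i)$, so the decreasing orbit converges to a steady state $\ge(\hat H_i,\hat V_i)>0$, again $(\hat H_i,\hat V_i)$ by uniqueness. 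Now for any nonnegative nontrivial $(H_{i0},V_{i0})$, Lemma \ref{lemma_positive} makes the solution strictly positive at some $t_1>0$, and it is bounded, so I can choose $\epsilon$ small and $k$ large with $\underline{u}_\epsilon\le(H_i(\cdot,t_1),V_i(\cdot,t_1))\le k\bar{u}$; applying $\tilde\Phi(t)$ and using monotonicity,
\[
\tilde\Phi(t)\underline{u}_\epsilon\le(H_i(\cdot,t+t_1),V_i(\cdot,t+t_1))\le\tilde\Phi(t)(k\bar{u}),
\]
and letting $t\to\infty$ both ends tend to $(\hat H_i,\hat V_i)$, which proves global attractivity. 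For Lyapunov stability I would reuse the orbits as shrinking barriers: $\underline{w}_t:=\tilde\Phi(t)\underline{u}_\epsilon\uparrow(\hat H_i,\hat V_i)$ and $\bar{w}_s:=\tilde\Phi(s)(k\bar{u})\downarrow(\hat H_i,\hat V_i)$; each order interval $[\underline{w}_t,\bar{w}_s]$ is forward invariant because $\tilde\Phi$ raises $\underline{w}_t$ and lowers $\bar{w}_s$, and the strong maximum principle gives $\underline{w}_t\ll(\hat H_i,\hat V_i)\ll\bar{w}_s$, so $[\underline{w}_t,\bar{w}_s]$ contains a sup-norm ball about $(\hat H_i,\hat V_i)$ and shrinks to it as $t,s\to\infty$. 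Given any neighborhood of $(\hat H_i,\hat V_i)$, a sufficiently advanced such interval lies inside it and traps every orbit starting in a small ball, which yields stability; combined with attractivity this gives global asymptotic stability.

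The step I expect to be the main obstacle is the rigorous passage from ``monotone and bounded'' to ``convergent in $C(\bar\Omega)$ to a genuine steady state,'' and in particular confirming that the limits are positive rather than trivial. The lower barrier is built precisely to push the increasing orbit off $0$ using $\kappa_0>0$, so that Lemmas \ref{lemma_po} and \ref{lemma_unique} can identify both limits with $(\hat H_i,\hat V_i)$. Care is also needed with the non-smooth term $(\hat V-V_i)^+$, which I avoid differentiating by keeping each barrier on the side where it is either identically $\hat V-V_i$ (near $0$) or identically $0$ (above $\hat V$), so that every sub- and super-solution inequality is verified in the favorable direction.
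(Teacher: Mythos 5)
Your overall strategy is the same as the paper's: exploit monotonicity of the cooperative system \eqref{main_reduced}, trap an arbitrary orbit between an increasing orbit issuing from a sub-solution and a decreasing orbit issuing from a super-solution, and identify both limits with the unique positive steady state via Lemmas \ref{lemma_po} and \ref{lemma_unique}. Your super-solution (a large constant vector that annihilates the $(\hat V-V_i)^+$ coupling), the sandwiching step, and the shrinking order-interval argument for Lyapunov stability are all sound; the stability argument is in fact spelled out in more detail than in the paper, which handles it with one sentence after the same squeeze.

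There is, however, one genuine gap: your lower barrier is $\epsilon(\varphi_0,\psi_0)$, built from the principal eigenvector of \eqref{eigg}, and its sub-solution inequality needs $\kappa_0>0$, which you justify by writing ``Since $R_0>1$.'' But $R_0>1$ is not a hypothesis of this lemma; the only assumption is that a positive steady state of \eqref{main_reduced} exists. The lemma is deliberately phrased as a conditional statement (compare Lemma \ref{lemma_unique}), and its proof must not invoke $\kappa_0>0$ unless that is first derived from the existence of the steady state --- an implication that is plausible (the steady state is a positive strict sub-solution of the linearization at $(0,0)$, which together with irreducibility of the cooperative structure should force $\kappa_0>0$) but is itself nontrivial and is nowhere supplied in your argument. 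The paper avoids the issue entirely by taking the sub-solution to be $\epsilon(\hat H_i,\hat V_i)$ itself: since $\epsilon\hat V_i\le\hat V_i$ implies $(\hat V-\epsilon\hat V_i)^+\ge(\hat V-\hat V_i)^+$, scaling the steady state down by $\epsilon\in(0,1]$ preserves the sub-solution inequalities, and scaling up by $k\ge 1$ gives a super-solution, so only existence is used. Replacing your eigenvector barrier by this one repairs your proof with no other changes; otherwise you would need to add a separate proof that existence of a positive steady state implies $\kappa_0>0$.
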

\begin{proof}
By Lemma \ref{lemma_positive}, we have $H_i(x, t)>0$ and $V_i(x, t)>0$ for all $x\in\bar\Omega$ and $t>0$. So without loss of generality, we may assume $H_{i0}(x)>0$ and $V_{i0}(x)>0$ for all $x\in\bar\Omega$.

Suppose that $(\hat H_i, \hat V_i)$ is a positive steady state of \eqref{main_reduced}, which is unique by Lemma \ref{lemma_unique}. Let $(\underline H_i, \underline V_i)= (\epsilon\hat H_i, \epsilon\hat V_i)$ for some $\epsilon>0$. We may choose $\epsilon$ small such that the following is satisfied:
 \begin{equation}
 \left\{
\begin{array}{lll} \label{compless}
-\triangledown\cdot \delta_1(x)\triangledown\underline H_i\le -\lambda(x) \underline H_i+\sigma_1(x) H_u(x)\underline V_i,\ \ \ &x\in\Omega, \\
-\triangledown\cdot \delta_2(x)\triangledown\underline V_i\le \sigma_2(x) (\hat V-\underline V_i)^+\underline H_i-\mu(x)\hat V \underline V_i,\ \ \ &x\in\Omega,\\
\frac{\partial}{\partial n}\underline H_i= \frac{\partial}{\partial n}\underline V_i=0,\ \ \ &x\in\partial\Omega,\\
\underline H_i(x)\le H_{i0}(x),\ \underline V_i(x)\le V_{i0}(x),\ \ \ &x\in\Omega.
\end{array}
\right.
\end{equation}
Hence by \cite[Corollary 7.3.6]{smith1995monotone}, $\tilde \Phi(t)(\underline H_i, \underline V_i)$ is monotone increasing in $t$ and converges to a positive steady state of \eqref{main_reduced}. Since $(\hat H_i, \hat V_i)$ is the unique positive steady state of \eqref{main_reduced}, we must have $\tilde \Phi(t)(\underline H_i, \underline V_i)\rightarrow (\hat H_i, \hat V_i)$ in $C(\bar\Omega)$ as $t\rightarrow \infty$.

Similarly, we may  define $(\overline H_i, \overline V_i)= (k\hat H_i, k\hat V_i)$ with $k$ large  such that  \eqref{compless} is satisfied with inverse inequalities, and then  $\tilde \Phi(t)(\overline H_i, \overline V_i)\rightarrow (\hat H_i, \hat V_i)$ in $C(\bar\Omega)$ as $t\rightarrow \infty$. Since $(\underline H_i, \underline V_i)\le (H_{i0}, V_{i0}) \le (\overline H_i, \overline V_i)$ and $\tilde\Phi(t)$ is monotone, we have  $\tilde\Phi(t)(\underline H_i, \underline V_i)\le \tilde\Phi(t)(H_{i0}, V_{i0})\le \tilde\Phi(t)(\overline H_i, \overline V_i)$ for all $t\ge 0$. Therefore, $\tilde\Phi(t)(H_{i0}, V_{i0})\rightarrow (\hat H_i, \hat V_i)$ in $C(\bar\Omega)$ as $t\rightarrow \infty$, and $(\hat H_i, \hat V_i)$ is locally stable. This proves the lemma.
\end{proof}


\subsection{Global stability of $E_2$}
 In this section, we prove the convergence of solutions of \eqref{main}-\eqref{maini} to the unique positive steady state $E_2$ when $R_0>1$. We begin by proving the ultimate boundedness of the solutions. 
\begin{lemma}\label{lemma_bound}
There exists $M>0$, independent of initial data, such that any solution $(H_i, V_u, V_i)$ of\eqref{main}-\eqref{maini} satisfies that
\begin{equation*}
0\le H_i(x, t), V_u(x, t), V_i(x, t)\le M, \ \ \ x\in\bar\Omega, t\ge t_0,
\end{equation*}
where $t_0$ is dependent on initial data.
\end{lemma}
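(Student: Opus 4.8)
The plan is to bound the three components in a cascade, exploiting the structure of the system and Lemma \ref{lemma_V}. First I would handle the vector total $V=V_u+V_i$. Adding the second and third equations of \eqref{main} shows that $V$ solves the logistic equation \eqref{v}, so by Lemma \ref{lemma_V} we have $\|V(\cdot,t)-\hat V\|_\infty\to 0$ as $t\to\infty$ (when $V_{u0}+V_{i0}\neq 0$; the trivial case $V\equiv 0$ is handled separately and forces $V_u=V_i=0$). Consequently there is a time $t_0$, depending on the initial data, and a constant $M_V:=\|\hat V\|_\infty+1$ such that $0\le V_u(x,t),V_i(x,t)\le V(x,t)\le M_V$ for all $x\in\bar\Omega$ and $t\ge t_0$. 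This already bounds both vector components uniformly in a way that is independent of initial data (only the threshold time $t_0$ depends on them), since $M_V$ depends only on $\hat V$.

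Next I would bound $H_i$ using the now-controlled $V_i$. For $t\ge t_0$ the first equation of \eqref{main} gives the differential inequality
\begin{equation*}
\frac{\partial}{\partial t}H_i-\triangledown\cdot\delta_1(x)\triangledown H_i\le -\lambda(x)H_i+\sigma_1(x)H_u(x)M_V,\ \ \ x\in\Omega,\ t\ge t_0,
\end{equation*}
so that $H_i$ is a subsolution of the scalar linear problem whose forcing term $\sigma_1 H_u M_V$ is bounded. The comparison principle (as in the preceding boundedness lemma for \eqref{main_reduced}) then yields $H_i(x,t)\le w(x,t)$, where $w$ solves the corresponding linear equation. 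Since $\lambda$ has a positive minimum $\lambda_m$ on $\bar\Omega$, the steady state of $w$ is bounded by $\|\sigma_1\|_\infty\|H_u\|_\infty M_V/\lambda_m$, and $w$ converges to it; hence $\limsup_{t\to\infty}\|H_i(\cdot,t)\|_\infty\le \|\sigma_1\|_\infty\|H_u\|_\infty M_V/\lambda_m=:M_H$, a constant independent of initial data. Enlarging $t_0$ if necessary, we get $H_i(x,t)\le M_H+1$ for $t\ge t_0$. Taking $M=\max\{M_V,\,M_H+1\}$ then gives the claimed uniform bound for all three components.

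The only delicate point is the bookkeeping about what "independent of initial data" means: the bound $M$ itself must be a universal constant, whereas the threshold time $t_0$ is permitted to depend on the initial data. This is exactly what the convergence in Lemma \ref{lemma_V} provides — the limit $\hat V$ is independent of the data, so every bound built from it is universal, while the time needed to enter a neighborhood of $\hat V$ is not. I expect no genuine analytic obstacle here; the main care is simply to order the estimates so that each bound is expressed through $\hat V$ and the fixed coefficient functions rather than through the initial data, and to treat the degenerate case $V_{u0}+V_{i0}=0$ (where the system collapses to $H_i$ alone driven by $V_i\equiv0$) separately.
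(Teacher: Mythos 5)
Your proposal is correct and follows essentially the same route as the paper's proof: use Lemma \ref{lemma_V} to bound $V_u+V_i$ by $\|\hat V\|_\infty+1$ after a data-dependent time, then compare $H_i$ with the solution of the linear scalar problem with bounded forcing $\sigma_1 H_u(\|\hat V\|_\infty+1)$, whose convergence to its (data-independent) steady state yields the universal bound $M$. The only differences are cosmetic: you bound the limiting elliptic solution explicitly by $\|\sigma_1\|_\infty\|H_u\|_\infty(\|\hat V\|_\infty+1)/\lambda_m$ rather than citing its sup-norm, and you explicitly treat the degenerate case $V_{u0}+V_{i0}=0$, which the paper's proof passes over silently.
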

\begin{proof}
By Lemma \ref{lemma_V}, we have $V_u(x, t)+V_i(x, t)\rightarrow\hat V(x)$ uniformly on $\bar\Omega$ as $t\rightarrow\infty$ if $V_{u0}+V_{i0}\neq 0$. Hence there exists $t_1>0$ depending on initial data such that $V_u(x, t)+V_i(x, t)\le\|\hat V\|_\infty+1$ for $t>t_1$ and $x\in\bar\Omega$. By the first equation of \eqref{main} and the comparison principle, we have $H_i\le \hat H_i$ on $\bar\Omega\times [t_1, \infty)$, where  $\hat H_i$ is the solution of the problem
 \begin{equation*}
 \left\{
 \begin{array}{lll}
\frac{\partial}{\partial t}\hat H_i-\triangledown\cdot \delta_1(x)\triangledown \hat H_i=-\lambda(x) \hat H_i+\sigma_1(x) H_u(x)(\|\hat V\|_\infty+1),\ \ \ &x\in\Omega, t>t_1,\\
\frac{\partial}{\partial n}\hat H_i=0, \ \ \ &x\in\partial\Omega, t>t_1,\\
\hat H_i(x, t_1)=H_i(x, t_1), \ \ \ &x\in\Omega,
\end{array}
\right.
\end{equation*}
We know that $\hat H_i(x, t)\rightarrow \hat H_{i}^*(x)$  uniformly on $\bar\Omega$ as $t\rightarrow \infty$, where $\hat H_{i}^*$ is the unique solution of the problem
\begin{equation*}
\left\{
 \begin{array}{lll}
-\triangledown\cdot \delta_1(x)\triangledown \hat H_i=-\lambda(x) \hat H_i+\sigma_1(x) H_u(x)(\|\hat V\|_\infty+1),\ \ \ &x\in\Omega, \\
\frac{\partial}{\partial n}\hat H_i=0, \ \ \ &x\in\partial\Omega.
\end{array}
\right.
\end{equation*}
Therefore there exists $t_0>t_1$ such that $H_i(x, t)\le\hat H_i(x, t)<\|\hat H_{i}^*\|_\infty+1$ for all $x\in\bar\Omega$ and $t\ge t_0$. Therefore, the claim holds with $M=\max\{\|\hat V\|_\infty+1, \|\hat H_{i}^*\|_\infty+1\}$.
\end{proof}

Let $(X, d)$ be a complete metric space and $\Phi(t): X\rightarrow X$ be a continuous semiflow. The distance from a subset $B$ of $X$ to a subset $A$ of $X$ is defined as $d(B, A):=\sup_{y\in B}\inf_{x\in A}d(y, x)$. Suppose that $X=\bar X_0$, where  $X_0$ is an open subset of $X$. Then $X=X_0\cup \partial X_0$ with the boundary $\partial X_0=X-X_0$ being closed in $X$. The semiflow $\Phi(t)$ is said to be uniformly persistent with respect to $(X_0, \partial X_0)$ if there is an $\epsilon>0$ such that $\liminf_{t\rightarrow \infty} d(T(t)x, \partial X_0)\ge \epsilon$ for all $x\in X_0$.

In the following of this section, let $X=C(\bar\Omega; \mathbb{R}_+^3)$ with the metric induced by the supremum norm $\|\cdot\|_\infty$. Define
$$
\partial X_0:=\lbrace (H_i,V_u,V_i) \in X: H_i+V_i=0 \text{ or }  V_u+V_i=0  \rbrace
$$
and
$$
X_0:=\lbrace (H_i,V_u,V_i) \in X: H_i+V_i>0 \text{ and }  V_u+V_i>0  \rbrace.
$$
Then $X=X_0\cup \partial X_0$, $X_0$ is relatively open with $\bar X_0=X$, and $\partial X_0$ is relatively closed in $X$. Let $w(x, t)=(H_i(x, t), V_u(x, t), V_i(x, t))$ be the solution of \eqref{main}-\eqref{maini} with initial data $w_0=(H_{i0}, V_{u0}, V_{i0})\in X$. Let $\Phi(t): X\rightarrow X$ be the semiflow induced by the solution of \eqref{main}-\eqref{maini}, i.e. $\Phi(t)w_0=w(\cdot, t)$ for $t\ge 0$. Then $\Phi(t)$ is point dissipative by Lemma \ref{lemma_bound} (see, e.g., \cite{hale1989persistence} for the definition). Moreover, $\Phi(t)$ is compact for any $t>0$, since \eqref{main}-\eqref{maini} is a standard reaction-diffusion system.

We prove the following persistence result when $R_0>1$, which is necessary for proving the convergence of solutions to the positive steady state.
\begin{lemma}\label{lemma_persistence}
If $R_0>1$, then \eqref{main}-\eqref{maini} is uniformly persistent in the sense that there exists $\epsilon>0$ such that, for any initial data $(H_{i0}, V_{u0}, V_{i0})\in X_0$,
\begin{equation}\label{strong1}
\liminf_{t\rightarrow\infty} \inf_{w\in \partial X_0}\|(H_i(\cdot, t), V_u(\cdot, t), V_i(\cdot, t))- w\|_\infty\ge \epsilon.
\end{equation}
Moreover, \eqref{main}-\eqref{maini} has at least one EE.
\end{lemma}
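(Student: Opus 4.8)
The plan is to apply the persistence theory of Hale and Waltman \cite{hale1989persistence} to the semiflow $\Phi(t)$ on $X=X_0\cup\partial X_0$. Since $\Phi(t)$ is point dissipative by Lemma \ref{lemma_bound} and compact for $t>0$, it admits a global attractor, so the remaining ingredients are a description of the dynamics on $\partial X_0$ together with a weak-repeller estimate for each boundary invariant set. First I would record that $X_0$ is positively invariant: if $V_{u0}+V_{i0}\neq 0$ then $V_u+V_i>0$ for $t>0$ by Lemma \ref{lemma_V}, and if $H_{i0}+V_{i0}\neq 0$ then $H_i+V_i>0$ for $t>0$ by the comparison/maximum-principle argument already used in Lemma \ref{lemma_positive}. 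Hence an orbit starting in $X_0$ never meets $\partial X_0$, and the relevant boundary flow is carried only by initial data in $\partial X_0$.

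Next I would identify the boundary invariant sets. If $V_{u0}+V_{i0}=0$ then $V_u\equiv V_i\equiv 0$ and $H_i$ solves a linear decaying equation, so its $\omega$-limit set is $\{E_0\}$; if $H_{i0}=V_{i0}=0$ but $V_{u0}\neq 0$ then $H_i\equiv V_i\equiv 0$ and $V_u\to\hat V$ by Lemma \ref{lemma_V}, so its $\omega$-limit set is $\{E_1\}$. Thus the set $\Omega_\partial$ of $\omega$-limit points of orbits remaining in $\partial X_0$ is contained in $\{E_0,E_1\}$. These are isolated invariant sets; within the boundary the only connection is the heteroclinic $E_0\to E_1$ (the total vector population grows from $0$ to $\hat V$) and no orbit returns from $E_1$ to $E_0$, so $\{E_0,E_1\}$ is an acyclic covering of $\Omega_\partial$.

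It then remains to show $E_0$ and $E_1$ are weak repellers for $X_0$, i.e. $W^s(E_j)\cap X_0=\emptyset$. For $E_0$ this is immediate: any orbit in $X_0$ has $V_u+V_i\to\hat V\not\equiv 0$ by Lemma \ref{lemma_V}, so cannot converge to $E_0=(0,0,0)$. The main obstacle is the weak-repeller property of $E_1$, where $R_0>1$ enters. Suppose for contradiction some orbit in $X_0$ satisfies $(H_i(\cdot,t),V_u(\cdot,t),V_i(\cdot,t))\to E_1$. Since $R_0>1$ we have $\kappa_0>0$ by Lemma \ref{lemma_R}, so by continuity of the principal eigenvalue of a cooperative elliptic operator I would fix $\eta>0$ small so that
\begin{equation*}
\left\{
\begin{array}{ll}
\kappa\varphi=\triangledown\cdot\delta_1\triangledown\varphi-\lambda\varphi+\sigma_1 H_u\psi, &x\in\Omega,\\
\kappa\psi=\triangledown\cdot\delta_2\triangledown\psi+\sigma_2(\hat V-2\eta)\varphi-\mu(\hat V+\eta)\psi, &x\in\Omega,\\
\frac{\partial}{\partial n}\varphi=\frac{\partial}{\partial n}\psi=0, &x\in\partial\Omega,
\end{array}
\right.
\end{equation*}
still has a positive principal eigenvalue $\kappa_\eta>0$ with positive eigenvector $(\varphi_\eta,\psi_\eta)$. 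Convergence to $E_1$ yields $t_0$ with $\|V_i(\cdot,t)\|_\infty<\eta$ and $\hat V-\eta<V_u+V_i<\hat V+\eta$ for $t\ge t_0$, hence $V_u\ge\hat V-2\eta$ and $V_u+V_i\le\hat V+\eta$ there. Then on $\bar\Omega\times[t_0,\infty)$ the pair $(H_i,V_i)$ satisfies the first equation with equality and $\frac{\partial}{\partial t}V_i-\triangledown\cdot\delta_2\triangledown V_i\ge\sigma_2(\hat V-2\eta)H_i-\mu(\hat V+\eta)V_i$, so $(H_i,V_i)$ is a super-solution of the cooperative linear system above. Since the orbit lies in $X_0$, $H_i(\cdot,t_0),V_i(\cdot,t_0)>0$ on $\bar\Omega$, so I may pick $c>0$ with $(H_i(\cdot,t_0),V_i(\cdot,t_0))\ge c(\varphi_\eta,\psi_\eta)$; the comparison principle for cooperative systems \cite{smith1995monotone} then forces $(H_i(\cdot,t),V_i(\cdot,t))\ge c\,e^{\kappa_\eta(t-t_0)}(\varphi_\eta,\psi_\eta)$, which blows up as $t\to\infty$, contradicting $(H_i,V_i)\to 0$. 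Hence $W^s(E_1)\cap X_0=\emptyset$.

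With $\Phi(t)$ point dissipative and compact, $\{E_0,E_1\}$ an acyclic isolated covering of $\Omega_\partial$, and both sets weak repellers, \cite{hale1989persistence} yields the uniform persistence estimate \eqref{strong1}. Finally, uniform persistence together with compactness and point dissipativity gives a global attractor for $\Phi(t)$ in $X_0$, and a standard attractor-based fixed-point argument for the restricted semiflow produces a steady state inside $X_0$, which is by definition an EE. I expect the third step — converting $R_0>1$ into the exponential lower bound through the perturbed principal eigenvalue and the comparison principle — to be the technical heart of the argument.
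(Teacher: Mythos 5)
Your proposal is correct and takes essentially the same route as the paper's own proof: the Hale--Waltman acyclicity framework with boundary attractor $\{E_0\}\cup\{E_1\}$, the weak-repeller property of $E_1$ obtained by perturbing the principal eigenvalue problem at $E_1$ (using $\kappa_0>0$ from Lemma \ref{lemma_R}) and comparing with the exponentially growing solution $c\,e^{\kappa_\eta (t-t_0)}(\varphi_\eta,\psi_\eta)$, and the EE obtained from standard persistence theory (the paper cites \cite[Theorem 1.3.7]{zhao2013dynamical}). No substantive gaps.
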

\begin{proof}
We prove this result in several steps.

Step 1.  $X_0$ is invariant under $\Phi(t)$.

Let $w_0=(H_{i0}, V_{u0}, V_{i0})\in X_0$. Then  $H_{i0}+V_{i0}>$ and $V_{u0}+V_{i0}> 0$. Suppose $V_{i0}=0$. Then  $H_{i0}\neq 0$ and $V_{u0}\neq 0$.
By the first equation of \eqref{main}, we have
$$\frac{\partial}{\partial t}H_i-\triangledown\cdot \delta_1\triangledown H_i\ge -\lambda H_i.$$
Then  by $H_{i0}\neq 0$ and the maximum principle, we have $H_i(x, t)>0$ for $x\in\bar\Omega$ and $t>0$.
 By the second equation of \eqref{main}, we have
$$\frac{\partial}{\partial t}V_u-\triangledown\cdot \delta_2\triangledown V_u\ge V_u(-\sigma_2H_i+\beta-\mu(V_u+V_i)).$$
Then by $V_{u0}\neq 0$ and the maximum principle, we have $V_u(x, t)>0$ for $x\in\bar\Omega$ and $t>0$. Noticing the third equation of \eqref{main}, we have
 $$\frac{\partial}{\partial t}V_i-\triangledown\cdot \delta_2\triangledown V_i> -\mu(V_u+V_i) V_i, \ \ \ x\in\Omega, t>0.$$
Then by the maximum principle, we have $V_i(x, t)>0$ for $x\in\bar\Omega$ and $t>0$.\\
Suppose $V_{i0}\neq 0$. Noticing
 $$
 \frac{\partial}{\partial t}V_i-\triangledown\cdot \delta_2\triangledown V_i\ge  -\mu(V_u+V_i) V_i
 $$
 and by the maximum principle, we have $V_i(x, t)>0$ for all $x\in\bar\Omega$ and $t>0$. By the first equation of \eqref{main}, we have
$$\frac{\partial}{\partial t}H_i-\triangledown\cdot \delta_1\triangledown H_i> -\lambda H_i, \ \ \  x\in\Omega, t>0,$$
which implies that  $H_i(x, t)>0$ for all $x\in\bar\Omega$ and $t>0$. By the second equation of \eqref{main}, we have
$$
\frac{\partial}{\partial t}V_u-\triangledown\cdot \delta_2\triangledown V_u> V_u(-\sigma_2H_i+\beta-\mu(V_u+V_i)), \ \ \  x\in\Omega, t>0.
$$
Therefore, we have $\Phi(t)w_0\in X_0$ for all $t>0$. Hence $X_0$ is invariant under $\Phi(t)$.

Step 2. $\partial X_0$ is invariant under $\Phi(t)$. For any $w_0\in \partial X_0$, the $\omega$-limit set $\omega(w_0)$ is either $\{E_0\}$ or $\{E_1\}$.

Suppose $w_0=(H_{i0}, V_{u0}, V_{i0})\in \partial X_0$. Then, $H_{i0}+V_{i0}= 0$ or $V_{u0}+V_{i0}= 0$. If $H_{i0}+V_{i0}= 0$ and  $V_{u0}\neq 0$, then we have $H_i(\cdot, t)=V_i(\cdot, t)= 0$ for all $t\ge 0$ by the first and third equations of \eqref{main}. Then the second equation of \eqref{main} is
$$
\frac{\partial}{\partial t}V_u-\triangledown\cdot \delta_2\triangledown V_u= V_u(\beta-\mu V_u).
$$
Hence by Lemma \ref{lemma_V}, we have $V_u(x, t)>0$ for $x\in\bar\Omega$ and $t>0$, and $V_u(\cdot, t)\rightarrow \hat V$ uniformly on $\bar\Omega$ as $t\rightarrow \infty$. So $\Phi(t)w_0\in\partial X_0$ with $\omega(w_0)=\{E_1\}$.  

 If $V_{u0}+V_{i0}= 0$, then by the second and third equations of \eqref{main}, we have $V_u(\cdot, t)=V_i(\cdot, t)= 0$ for all $t\ge 0$. Then the first equation of \eqref{main} is
$$
\frac{\partial}{\partial t}H_i-\triangledown\cdot \delta_1\triangledown H_i= -\lambda H_i, 
$$
which implies that $H_i(x, t)\rightarrow 0$ uniformly on $\bar\Omega$ as $t\rightarrow 0$. Therefore, we have $\Phi(t)w_0\in\partial X_0$ with $\omega(w_0)=\{E_0\}$.

By Step 2, the semiflow $\Phi_\partial (t):=\Phi(t)|_{\partial X_0}$, the restriction of $\Phi(t)$ on $\partial X_0$,  admits a compact global attractor $A_\partial$.  Moreover, it is clear that
$$\tilde A_\partial:=\cup_{w_0\in A_\partial} \omega(w_0)=\{E_0, E_1\}.$$

 Step 3. $\tilde A_\partial$ has an acyclic covering $M=\{E_0\}\cup \{E_1\}$.

It suffices to show that $\{E_1\}\not\rightarrow \{E_0\}$, i.e. $W^u(E_1)\cap W^s(E_0)=\varnothing$. Suppose to the contrary that there exists $w_0=(H_{i0}, V_{u0}, V_{i0})\in W^u(E_1)\cap W^s(E_0)$. Let $(H_i(\cdot, t), V_u(\cdot, t), V_i(\cdot, t))$ be a complete orbit through $w_0$. By $w_0\in W^s(E_0)$ and Lemma \ref{lemma_V},  we have $V_{u0}=V_{i0}=0$, and hence $V_u(\cdot, t)=V_i(\cdot, t)=0$ for all $t\in (-\infty, \infty)$. Therefore $V_u(\cdot, t)\not\rightarrow \hat V$ as $t\rightarrow -\infty$, contradicting $w_0\in W^u(E_1)$.  Therefore $M=\{E_0\}\cup \{E_1\}$ is an acyclic covering of $\tilde A_\partial$. 

Step 4. $W^s(E_0)\cap X_0=\varnothing$ and $W^s(E_1)\cap X_0=\varnothing$.

We will actually show:
\begin{equation}\label{E0s}
W^s(E_0)=\{(H_{i0}, V_{u0}, V_{i0})\in\partial X_0: \ V_{u0}=V_{i0}=0\}
\end{equation}
and
$$
W^s(E_1)=\{(H_{i0}, V_{u0}, V_{i0})\in\partial X_0: \  H_{i0}=V_{i0}=0 \ \ \text{and}\ \ V_{u0}\not\equiv 0\}.
$$

By Step 2, it suffices to show that there exists $\epsilon>0$ such that, for any  initial data $(H_{i0}, V_{u0}, V_{i0})\in X_0$, we have
  \begin{equation}\label{weak1}
\limsup_{t\rightarrow\infty}\|(H_i(\cdot, t), V_u(\cdot, t), V_i(\cdot, t))-E_0\|_\infty\ge \epsilon
\end{equation}
and
 \begin{equation}\label{weak2}
\limsup_{t\rightarrow\infty}\|(H_i(\cdot, t), V_u(\cdot, t), V_i(\cdot, t))-E_1\|_\infty\ge \epsilon.
\end{equation}

We first prove \eqref{weak2}. By Lemma \ref{lemma_R} and $R_0>1$, we have $\kappa_0>0$. Hence there exists $\epsilon_0>0$ such that the following problem has a principal eigenvalue $\kappa_{\epsilon_0}>0$ corresponding to a positive eigenvector $(\phi_{\epsilon_0}, \psi_{\epsilon_0})$
\begin{equation*}
\left\{
 \begin{array}{lll}
\kappa\varphi=\triangledown\cdot \delta_1\triangledown \varphi-\lambda \varphi+\sigma_1 H_u\psi,\ \ \ &x\in\Omega, \\
\kappa\psi=\triangledown\cdot \delta_2\triangledown \psi+\sigma_2(\hat V-\epsilon_0)\varphi-\mu (\hat V+2\epsilon_0)\psi,\ \ \ &x\in\Omega,\\
\frac{\partial}{\partial n}\varphi=\frac{\partial}{\partial n}\psi=0,\ \ \ &x\in\partial\Omega.
\end{array}
\right.
\end{equation*}

Assume to the contrary that \eqref{weak2} does not hold. Then there exists some $w_0=(H_{i0}, V_{u0}, V_{i0})\in X_0$ such that the corresponding solution satisfies
\begin{equation*}
\limsup_{t\rightarrow\infty}\|(H_i(\cdot, t), V_u(\cdot, t), V_i(\cdot, t))-E_1\|_\infty< \epsilon_0.
\end{equation*}
Hence there exists $t_0>0$ such that $\hat V-\epsilon_0<V_u(\cdot, t)<\hat V+\epsilon_0$ and $V_i(\cdot, t)<\epsilon_0$ for all $t\ge t_0$. It then follows from the second and third equation of \eqref{main} that
\begin{equation*}
\left\{
\begin{array}{lll}
&\frac{\partial}{\partial t}H_i-\triangledown\cdot \delta_1\triangledown H_i=-\lambda H_i+\sigma_1 H_uV_i,\ \ \ &x\in\Omega, t\ge t_0,\\
&\frac{\partial}{\partial t}V_i-\triangledown\cdot \delta_2\triangledown V_i\ge \sigma_2(\hat V-\epsilon_0)H_i-\mu(\hat V+2\epsilon_0)V_i,\ \ \ &x\in\Omega, t\ge t_0.
\end{array}
\right.
\end{equation*}
In Step 1, we have shown that $H_i(x, t), V_i(x, t)>0$ for all $x\in\bar\Omega$ and $t>0$. Thus we can choose $m>0$ small such that $H_i(\cdot, t_0)\ge m\phi_{\epsilon 0}$ and $V_i(\cdot, t_0)\ge m\psi_{\epsilon 0}$. Hence $(H_i, V_i)$ is an upper solution of the problem
\begin{equation*}
\left\{
\begin{array}{lll}
&\frac{\partial}{\partial t}\bar H_i-\triangledown\cdot \delta_1\triangledown \bar H_i=-\lambda \bar H_i+\sigma_1 H_u\bar V_i,\ \ \ &x\in\Omega, t\ge t_0,\\
&\frac{\partial}{\partial t}\bar V_i-\triangledown\cdot \delta_2\triangledown \bar V_i= \sigma_2 (\hat V-\epsilon_0)\bar H_i-\mu(\hat V+2\epsilon_0)\bar V_i,\ \ \ &x\in\Omega, t\ge t_0,\\
&\frac{\partial }{\partial n}\bar H_i=\frac{\partial}{\partial n}\bar V_i=0,\ \ \ &x\in\partial\Omega, t\ge t_0,\\
&\bar H_i(\cdot, t_0)=m\phi_{\epsilon 0},\  \  \bar V_i(\cdot, t_0)=m\psi_{\epsilon 0}.
\end{array}
\right.
\end{equation*}
We observe that the solution of this problem is $(\bar H_i, \bar V_i)=m e^{\kappa_{\epsilon 0}t}(\phi_{\epsilon_0}, \psi_{\epsilon_0})$. By the comparison principle of the cooperative systems, we have $H_i(\cdot, t)\ge \bar H_i(\cdot, t)$ and $V_i(\cdot, t)\ge \bar V_i(\cdot, t)$ for $t\ge t_0$. Since $\kappa_{\epsilon 0}>0$, we have $H_i(\cdot, t)\rightarrow \infty$ and $V_i(\cdot, t)\rightarrow\infty$ as $t\rightarrow\infty$, which contradicts the boundedness of the solution.  This proves \eqref{weak2}.

We then prove \eqref{weak1}. Suppose to the contray that \eqref{weak1} does not hold. Then for given small $\epsilon_1>0$, there exists initial data $(H_{i0}, V_{u0}, V_{i0})\in X_0$ such that
  \begin{equation*}
\limsup_{t\rightarrow\infty}\|(H_i(\cdot, t), V_u(\cdot, t), V_i(\cdot, t))-E_0\|_\infty< \epsilon_1.
\end{equation*}
Hence there exists $t_1>0$ such that $V_u(\cdot, t)<\epsilon_1$ and $V_i(\cdot, t)<\epsilon_1$ for all $t\ge t_1$. However by Lemma \ref{lemma_V}, we know that $V_u(\cdot, t)+V_i(\cdot, t)\rightarrow \hat V$ uniformly on $\bar\Omega$ as $t\rightarrow\infty$, which is a contradiction as $\epsilon_1$ is small.

Finally by Steps 1-4 and \cite[Theorem 4.1]{hale1989persistence}, there exists $\epsilon>0$ such that \eqref{strong1}  holds. Moreover by \cite[Theorem1.3.7]{zhao2013dynamical}, \eqref{main}-\eqref{maini} has an EE.
\end{proof}

Combing Lemmas \ref{lemma_reduced} and \ref{lemma_persistence}, we can prove the main result in this section. 
\begin{theorem}\label{theorem_Rbigger}
If $R_0>1$, then for any initial data $(H_{i0}, V_{u0}, V_{i0})\in X_0$, the solution $(H_i, V_u, V_i)$ of \eqref{main}-\eqref{maini} satisfies that 
\begin{equation*}
\lim_{t\rightarrow\infty} (H_i(x, t), V_u(x, t), V_i(x, t))=(\hat H_i, \hat V_u, \hat V_i) \ \ \text{uniformly on } \bar\Omega,
\end{equation*}
where $E_2=(\hat H_i, \hat V_u, \hat V_i)$ is the unique EE of \eqref{main}.
\end{theorem}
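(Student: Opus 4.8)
The plan is to combine the three ingredients the paper has already assembled: the global stability of the positive steady state for the \emph{limit} system \eqref{main_reduced} (Lemma \ref{lemma_reduced}), the uniform persistence of the full system \eqref{main}-\eqref{maini} when $R_0>1$ (Lemma \ref{lemma_persistence}), and the theory of asymptotically autonomous semiflows (\cite[Theorem 4.1]{thieme1992convergence}). First I would fix initial data $(H_{i0},V_{u0},V_{i0})\in X_0$ and recall from Lemma \ref{lemma_V} that, since $V_{u0}+V_{i0}\neq 0$, we have $V_u(\cdot,t)+V_i(\cdot,t)\to\hat V$ in $C(\bar\Omega)$. Substituting $V_u=(V_u+V_i)-V_i$ into the first and third equations of \eqref{main} shows that $(H_i(\cdot,t),V_i(\cdot,t))$ solves a non-autonomous system whose coefficients converge (uniformly in $x$) to those of the autonomous limit system \eqref{main_reduced}; this is precisely the setting in which the theory of asymptotically autonomous semiflows applies, with \eqref{main_reduced} as the limiting semiflow $\tilde\Phi$.

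Before invoking that theory I must verify its hypotheses, which reduce to the three points (a), (b), (c) outlined in the introduction. Point (a), the existence of a positive steady state $(\hat H_i,\hat V_i)$ of \eqref{main_reduced}, follows from the last assertion of Lemma \ref{lemma_persistence} (existence of an EE) together with the reduction $\hat V_u=\hat V-\hat V_i$, while uniqueness is Lemma \ref{lemma_unique}; point (b), global stability of $(\hat H_i,\hat V_i)$ in $W=\{(H_{i0},V_{i0}):H_{i0}+V_{i0}\neq 0\}$, is exactly Lemma \ref{lemma_reduced}. The essential remaining point is (c): I must show that the $\omega$-limit set of the full trajectory $(H_i(\cdot,t),V_i(\cdot,t))$ meets $W$, i.e. does not collapse onto the trivial state $(0,0)$. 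This is where persistence enters. By Lemma \ref{lemma_persistence} there is $\epsilon>0$ with $\liminf_{t\to\infty}\inf_{w\in\partial X_0}\|(H_i,V_u,V_i)(\cdot,t)-w\|_\infty\ge\epsilon$; since the point $(H_i,0,V_i)$ lies in $\partial X_0$ when $H_i+V_i=0$, this bound forces $\liminf_{t\to\infty}(\|H_i(\cdot,t)\|_\infty+\|V_i(\cdot,t)\|_\infty)\ge\epsilon$, so every point of the $\omega$-limit set has nonzero $(H_i,V_i)$-component and hence lies in $W$.

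With (a)--(c) established, \cite[Theorem 4.1]{thieme1992convergence} yields that the $\omega$-limit set of $(H_i(\cdot,t),V_i(\cdot,t))$ under the non-autonomous flow is contained in the $\omega$-limit set, under the limiting semiflow $\tilde\Phi$, of some point of $W$; by the global stability in Lemma \ref{lemma_reduced} this latter set is the singleton $\{(\hat H_i,\hat V_i)\}$. Therefore $H_i(\cdot,t)\to\hat H_i$ and $V_i(\cdot,t)\to\hat V_i$ uniformly on $\bar\Omega$. To finish I would recover the $V_u$ component: writing $V_u=(V_u+V_i)-V_i$ and using $V_u+V_i\to\hat V$ (Lemma \ref{lemma_V}) together with $V_i\to\hat V_i$ gives $V_u(\cdot,t)\to\hat V-\hat V_i=:\hat V_u$ uniformly. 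Since the limiting triple $(\hat H_i,\hat V_u,\hat V_i)$ is a positive steady state of the full elliptic system \eqref{maine}, it is an EE, and by the uniqueness built into the limit system it must equal $E_2$; this identifies the limit and completes the proof.

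I expect the main obstacle to be the careful verification that the non-autonomous $(H_i,V_i)$-system is genuinely asymptotically autonomous with limit \eqref{main_reduced} in the precise sense required by \cite{thieme1992convergence} — in particular checking the compactness and continuity conditions on the semiflow and confirming that the convergence $V_u+V_i\to\hat V$ provides the uniform coefficient convergence the theorem demands. Once that abstract machinery is correctly set up, steps (a)--(c) are supplied verbatim by the preceding lemmas and the argument closes quickly.
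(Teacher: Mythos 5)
Your proposal is correct and follows essentially the same route as the paper's own proof: persistence (Lemma \ref{lemma_persistence}) forces the $\omega$-limit set of $(H_i,V_i)$ into $W$, Lemma \ref{lemma_reduced} supplies global stability of the unique positive steady state of the limit system, and \cite[Theorem 4.1]{thieme1992convergence} closes the argument, with $V_u$ recovered from $V_u+V_i\to\hat V$. The single step you flag as an obstacle --- that the limit is genuinely \eqref{main_reduced} with the positive part $(\hat V-V_i)^+$, rather than what naive substitution of $V_u=(V_u+V_i)-V_i$ suggests --- is resolved in the paper by rewriting the $V_i$-equation as the autonomous right-hand side of \eqref{main_reduced} plus a forcing term $F(x,t)=\sigma_2\left(V_u-(\hat V-V_i)^+\right)H_i-\mu\left(V_u+V_i-\hat V\right)V_i$, and using the pointwise inequality $|V_u-(\hat V-V_i)^+|\le |V_u+V_i-\hat V|$ together with the ultimate boundedness of solutions (Lemma \ref{lemma_bound}) to conclude $F\to 0$ uniformly on $\bar\Omega$.
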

\begin{proof}
By Lemma \ref{lemma_persistence}, there exists an EE, $E_2:=(\hat H_i, \hat V_u, \hat V_i)$, of \eqref{main}-\eqref{maini} when $R_0>1$. By Lemma \ref{lemma_V},  $\hat V_u+\hat V_i=\hat V$. So $(\hat H_i, \hat V_i)$ is a positive solution of \eqref{mainep}, which is unique by Lemma \ref{lemma_unique}. Hence, $E_2$ is the unique EE of \eqref{main}-\eqref{maini}.

 Let $(H_{i0}, V_{u0}, V_{i0})\in X_0$. Then $V_{u0}+V_{i0}\neq 0$ and $H_{i0}+V_{i0}\neq 0$.  By Lemma \ref{lemma_V}, we have $V_u(\cdot, t)+V_i(\cdot, t)\rightarrow \hat V$ in $C(\bar\Omega)$ as $t\rightarrow\infty$.  By Lemma \ref{lemma_persistence}, there exists $\epsilon>0$ such that 
 \begin{equation}\label{per}
 \liminf_{t\rightarrow\infty} \|H_i(\cdot, t)\|_\infty+\|V_i(\cdot, t)\|_\infty \ge \epsilon.
 \end{equation}
 
 We focus on the first and third equations of \eqref{main} and rewrite them as: 
 \begin{equation}
\left\{
\begin{array}{lll} \label{main_reduced1}
\frac{\partial}{\partial t}H_i-\triangledown\cdot \delta_1\triangledown H_i=-\lambda H_i+\sigma_1 H_uV_i,\ \ \ &x\in\Omega, t>0,\\
\frac{\partial}{\partial t}V_i-\triangledown\cdot \delta_2\triangledown V_i=\sigma_2 (\hat V-V_i)^+H_i-\mu\hat V V_i+F(x, t),\ \ \ &x\in\Omega, t>0,\\
\frac{\partial}{\partial n}H_i = \frac{\partial}{\partial n}V_i=0,\ \ \ &x\in\partial\Omega, t>0,\\
H_i(x, 0)=H_{i0}(x),\ V_i(x, 0)=V_{i0}(x),\ \ \ &x\in\Omega,
\end{array}
\right.
\end{equation}
 where 
 $$
 F(x, t)=\sigma_2(V_u(\cdot, t)-(\hat V-V_i(\cdot, t))^+)H_i-\mu(V_u(\cdot, t)+V_i(\cdot, t)-\hat V).
 $$
Noticing 
$$
|V_u(\cdot, t)-(\hat V-V_i(\cdot, t))^+|\le |V_u(\cdot, t)+V_i(\cdot, t)-\hat V|, 
$$
we have  $F(x, t)\rightarrow 0$ uniformly on $\bar\Omega$ as $t\rightarrow\infty$. Then by  \cite[Proposition 1.1]{mischaikow1995asymptotically}, \eqref{main_reduced1} is asymptotically autonomous with limit system \eqref{main_reduced}. By \eqref{per}, the $\omega-$limit set of \eqref{main_reduced1} is contained in $W:=\{(H_i, V_i)\in C(\bar\Omega; \mathbb{R}_+^2):  \ H_i+V_i\neq 0\}$. By Lemma \ref{lemma_reduced}, $W$ is the stable set (or basin of attraction) of the equilibrium $(\hat H_i, \hat V_i)$ of \eqref{main_reduced}. Hence by the theory of asymptotically autonomous semiflows (originally due to Markus. See \cite[Theorem 4.1]{thieme1992convergence} for the generalization to asymptotically autonomous semiflows), we have $(H_i(\cdot, t), V_i(\cdot, t))\rightarrow (\hat H_i, \hat V_i)$ in $C(\bar\Omega; \mathbb{R}^2)$ as $t\rightarrow\infty$. Moreover, by $V_u(\cdot, t)+V_i(\cdot, t)\rightarrow \hat V$ and $\hat V_i+\hat V_u=\hat V$, we have $V_u(\cdot, t)\rightarrow \hat V_u$ in $C(\bar\Omega)$ as $t\rightarrow\infty$. This completes the proof.
\end{proof}

\section{Global stability when $R_0=1$}
In this section, we prove the global stability of $E_1$ for the critical case $R_0=1$. The following result is well known. Since we can not locate a reference and for the convenience of readers, we attach a proof. 
\begin{lemma}\label{lemma_exp}
The positive steady state $\hat V$ of \eqref{v} is exponentially asymptotically stable.
\end{lemma}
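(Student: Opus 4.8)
The plan is to prove exponential stability by the principle of linearized stability: linearize \eqref{v} at $\hat V$, show that the linearized elliptic operator has a strictly negative principal eigenvalue, and then transfer this spectral gap to the nonlinear flow through the analytic semigroup framework. Writing $W=V-\hat V$ and using that $\hat V$ solves \eqref{vhat}, the perturbation satisfies
\begin{equation*}
W_t-\triangledown\cdot\delta_2\triangledown W=(\beta-2\mu\hat V)W-\mu W^2,
\end{equation*}
with homogeneous Neumann boundary condition, so the linearization is governed by the operator $L\phi=\triangledown\cdot\delta_2\triangledown\phi+(\beta-2\mu\hat V)\phi$ and the remainder $-\mu W^2$ is quadratic in $W$.

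First I would pin down the sign of the principal eigenvalue of $L$. Since $\hat V>0$ solves \eqref{vhat}, it is a positive eigenfunction of \eqref{EVP} with $f=\beta-\mu\hat V$ and eigenvalue $0$; because the principal eigenvalue is the unique one admitting a positive eigenfunction, $\kappa_1(\delta_2,\beta-\mu\hat V)=0$. As $\mu\hat V>0$ on $\bar\Omega$, we have $\beta-\mu\hat V\ge(\neq)\beta-2\mu\hat V$, so the strict monotonicity of $\kappa_1(\delta_2,\cdot)$ recalled after \eqref{EVP} gives
\begin{equation*}
\kappa_1(\delta_2,\beta-2\mu\hat V)<\kappa_1(\delta_2,\beta-\mu\hat V)=0.
\end{equation*}
Thus the linearization has a spectral gap strictly below $0$.

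Next I would promote this to exponential decay of the semigroup. The operator $\triangledown\cdot\delta_2\triangledown$ with Neumann boundary condition generates an analytic semigroup on $C(\bar\Omega)$ (or on $L^p$, passing to a fractional power space), and adding the bounded multiplication operator $\beta-2\mu\hat V$ leaves $L$ sectorial with compact resolvent; its spectrum is real and bounded above by $\kappa_1(\delta_2,\beta-2\mu\hat V)$, so the spectral bound is $s(L)=\kappa_1(\delta_2,\beta-2\mu\hat V)<0$. Fixing $\omega\in(0,-s(L))$, one has $\|e^{tL}\|\le Me^{-\omega t}$ for all $t\ge0$. Combining this decay estimate with the fact that $-\mu W^2$ vanishes to second order at $W=0$, a standard fixed-point and Gronwall argument in the appropriate fractional power space (the principle of linearized stability) yields constants $\delta,C>0$ and $\omega'\in(0,\omega]$ such that $\|V_0-\hat V\|_\infty<\delta$ implies $\|V(\cdot,t)-\hat V\|_\infty\le Ce^{-\omega' t}\|V_0-\hat V\|_\infty$, which is the claimed exponential asymptotic stability; together with the global attraction in Lemma \ref{lemma_V} this fixes the decay rate near $\hat V$.

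I expect the only genuinely delicate point to be the functional-analytic bookkeeping of the last step: selecting a space in which $L$ is sectorial with the stated spectral bound, verifying that the quadratic term is a locally Lipschitz (indeed $C^1$) map on the chosen fractional power space, and controlling it against the semigroup decay so that the linear rate survives for the nonlinear flow. The eigenvalue computation itself is immediate from the monotonicity of $\kappa_1$ already recorded in the excerpt, so it is not the obstacle.
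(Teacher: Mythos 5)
Your proposal is correct and follows essentially the same route as the paper: both identify $\kappa_1(\delta_2,\beta-\mu\hat V)=0$ from the positivity of $\hat V$, deduce $\kappa_1(\delta_2,\beta-2\mu\hat V)<0$ by strict monotonicity of the principal eigenvalue, and then convert the linear semigroup decay plus the quadratic remainder $-\mu W^2$ into nonlinear exponential decay. The only difference is presentational — you outsource the last step to the abstract principle of linearized stability in a fractional power space, while the paper carries out the Duhamel and Gronwall estimate explicitly in $C(\bar\Omega)$ (and, incidentally, your form of the remainder $-\mu W^2$ is the correct one; the paper's $-2\mu w^2$ is a harmless typo).
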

\begin{proof}
It is easy to see that $\hat V$ is locally asymptotically stable. To see this, linearizing \eqref{v} around $\hat V$, we obtain
\begin{equation}
\left\{
\begin{array}{ll}
\kappa \phi=\triangledown\cdot \delta_2\triangledown \phi+\beta \phi-2\mu \hat V\phi,\ \ \ &x\in\Omega,\\
\frac{\partial}{\partial n}\phi=0,\ \ \ &x\in\partial\Omega.
\end{array}
\label{vhats}
\right.
\end{equation}
Since $\hat V$ satisfies \eqref{vhat}, we have $\kappa_1(\delta_2, \beta-\mu\hat V)=0$. Hence  $a:=\kappa_1(\delta_2, \beta-2\mu\hat V)<0$, i.e. the principal eigenvalue of \eqref{vhats} is negative. Therefore, $\hat V$ is linearly stable. By the principle of linearized stability, it is locally asymptotically stable. 

Let $\epsilon>0$ be given. Since $\hat V$ is locally asymptotically stable, there exists $\delta>0$ such that $\|V(\cdot, t)-\hat V\|_\infty<\epsilon$ for all $V_0\in C_+(\bar\Omega)$ with $\|V_0-\hat V\|_\infty<\delta$. Let $w(\cdot, t)=V(\cdot, t)-\hat V$. Then $w$ satisfies 
 \begin{equation}
\left\{
\begin{array}{ll} \label{vw}
w_t=\triangledown\cdot \delta_2\triangledown w+ (\beta -2\mu \hat V)w-2\mu w^2,\ \ \ &x\in\Omega, t>0,\\
\frac{\partial}{\partial n}w=0,\ \ \ &x\in\partial\Omega, t>0,\\
w(x, 0)=V_0-\hat V,\ \ \ &x\in\Omega.
\end{array}
\right.
\end{equation}
Let $S(t)$ be the semigroup generated by $\triangledown\cdot \delta_2\triangledown+ (\beta -2\mu \hat V)$ (associated with Neumann boundary condition) in $C(\bar\Omega)$. Then there exists $M_1>0$ such that $\|S(t)\|\le M_1e^{-at}$ for all $t\ge 0$. Then by \eqref{vw}, we have
$$
w(\cdot, t)=S(t)w(\cdot, 0) - \int_0^t S(t-s)\mu w(\cdot, s)^2 ds.
$$
It then follows that
\begin{eqnarray*}
\|w(\cdot, t)\|_\infty &\le& \|S(t)w(\cdot, 0)\|_\infty +\int_0^t \|S(t-s)\mu w(\cdot, s)^2\|_\infty ds\\
&\le&M_1e^{-at}\|w(\cdot, 0)\|_\infty + \epsilon M_1\|\mu\|_\infty \int_0^t e^{-a(t-s)}\|w(\cdot, t)\|_\infty ds.
\end{eqnarray*}
By the Gronwall's inequality, if $\epsilon\le a/2\|\mu\|_\infty M_1$, we have
$$
\|w(\cdot, t)\|_\infty\le M_1\|V_0-\hat V\|_\infty e^{(M_1\|\mu\|_\infty\epsilon-a)t}\le M_1\|V_0-\hat V\|_\infty e^{-at/2}.
$$
Therefore, $\hat V$ is exponentially asymptotically stable. 
\end{proof}

We then prove the local stability of $E_1$ when $R_0=1$.
\begin{lemma}\label{lemma_locals}
If $R_0=1$, then $E_1$ is locally  stable.
\end{lemma}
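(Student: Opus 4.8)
The plan is to exploit the neutral principal eigenfunction of the decoupled cooperative problem \eqref{eigg} together with the stabilizing quadratic nonlinearity to build a time-independent supersolution that traps the solution near $E_1$. Since $R_0=1$, Lemma \ref{lemma_R} gives $\kappa_0=0$, so the positive principal eigenvector $(\varphi_0,\psi_0)\gg 0$ of \eqref{eigg} satisfies $\nabla\cdot\delta_1\nabla\varphi_0=\lambda\varphi_0-\sigma_1 H_u\psi_0$ and $\nabla\cdot\delta_2\nabla\psi_0=-\sigma_2\hat V\varphi_0+\mu\hat V\psi_0$. First I would observe that $V:=V_u+V_i$ solves the autonomous logistic equation \eqref{v} independently of $H_i$; by the stability of $\hat V$ (Lemma \ref{lemma_exp}) one can, for any prescribed $\eta_0>0$, force $\sup_{t\ge0}\|V(\cdot,t)-\hat V\|_\infty\le\eta_0$ by taking $\|V_0-\hat V\|_\infty$ small. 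This reduces matters to the non-autonomous cooperative system for $(H_i,V_i)$ obtained by substituting $V_u=V(\cdot,t)-V_i$ in the first and third equations of \eqref{main}, where $V(\cdot,t)$ is now a known coefficient staying uniformly $\eta_0$-close to $\hat V$.

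Next I would verify that $\bar u:=s(\varphi_0,\psi_0)$ is a time-independent supersolution of this reduced system for small $s>0$. Using the eigenfunction identities, the $H_i$-inequality holds with equality, while for the $V_i$-equation a direct computation gives $-s\nabla\cdot\delta_2\nabla\psi_0-\big[\sigma_2(V-s\psi_0)s\varphi_0-\mu V s\psi_0\big]=s(\hat V-V)(\sigma_2\varphi_0-\mu\psi_0)+s^2\sigma_2\varphi_0\psi_0$. The crucial point is that the quadratic term $s^2\sigma_2\varphi_0\psi_0$ is strictly positive — this is exactly where the stabilizing sign of the nonlinearity $-\sigma_2 V_iH_i$ enters — and it dominates the coupling error $s(\hat V-V)(\sigma_2\varphi_0-\mu\psi_0)$, which is bounded by $s\eta_0\|\sigma_2\varphi_0-\mu\psi_0\|_\infty$, as soon as $s\,\min_{\bar\Omega}(\sigma_2\varphi_0\psi_0)\ge \eta_0\|\sigma_2\varphi_0-\mu\psi_0\|_\infty$. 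Thus, after fixing a small $s$, choosing $\eta_0$ (hence $\|V_0-\hat V\|_\infty$) small makes $\bar u$ a genuine supersolution for all $t\ge0$.

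Finally I would close the argument by comparison. Given $\epsilon>0$, fix $s$ so small that $\|s(\varphi_0,\psi_0)\|_\infty<\epsilon$ and $s\psi_0<\hat V$ on $\bar\Omega$; since $\varphi_0,\psi_0$ are bounded below by positive constants, there is $\delta>0$ such that $\|(H_{i0},V_{u0},V_{i0})-E_1\|_\infty<\delta$ forces both $(H_{i0},V_{i0})\le s(\varphi_0,\psi_0)$ and the uniform bound $\sup_{t\ge0}\|V(\cdot,t)-\hat V\|_\infty\le\eta_0$ required above. The reduced system is cooperative (as $V_u=V-V_i\ge0$), so the comparison principle for cooperative systems \cite{smith1995monotone}, with subsolution $0$ and supersolution $\bar u$, yields $0\le(H_i(\cdot,t),V_i(\cdot,t))\le s(\varphi_0,\psi_0)$ for all $t\ge0$, whence $\|(H_i,V_i)\|_\infty<\epsilon$; combined with $|V_u-\hat V|\le|V-\hat V|+V_i$ this controls the full deviation $\|(H_i,V_u,V_i)-E_1\|_\infty$ for all time, proving local stability. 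The main obstacle is precisely the degeneracy $\kappa_0=0$: the linearization provides no contraction, so stability must be extracted from the quadratic term, and the delicate part is guaranteeing that the supersolution inequality survives the coupling to the $V$-equation for every $t\ge0$ — which is exactly why the uniform-in-time closeness of $V$ to $\hat V$ furnished by Lemma \ref{lemma_exp} is needed.
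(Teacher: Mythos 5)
Your proof is correct, but it takes a genuinely different route from the paper's. The paper argues functional-analytically: since $\kappa_0=S(A)=0$, the positive semigroup $T(t)$ generated by $A=B+C$ is bounded, so writing $(H_i,V_i)$ by variation of constants and estimating the perturbation terms $\sigma_2(V_u-\hat V)H_i-\mu(V-\hat V)V_i$ through the \emph{exponential} estimate of Lemma \ref{lemma_exp} gives, via Gronwall's inequality, $\max\{\|H_i(\cdot,t)\|_\infty,\|V_i(\cdot,t)\|_\infty\}\le M_2e^{C\delta/b}\delta$; the rate $e^{-bt}$ is essential there, since with only uniform closeness of $V$ to $\hat V$ the Gronwall factor would grow like $e^{C\delta t}$. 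You instead extract stability from the nonlinearity: the stationary supersolution $s(\varphi_0,\psi_0)$ built on the neutral principal eigenfunction works because the quadratic term $s^2\sigma_2\varphi_0\psi_0$ strictly dominates the coupling error of size $s\eta_0\|\sigma_2\varphi_0-\mu\psi_0\|_\infty$, so only plain Lyapunov stability of $\hat V$ (no rate) is needed, and semigroup theory is avoided entirely --- in particular you bypass the paper's implicit reliance on the simplicity of the eigenvalue $\kappa_0=0$ behind the asserted bound $\|T(t)\|\le M_2$. The price is that you lean on the monotone structure, and one detail should be made explicit: the comparison principle requires quasimonotonicity of $F_2=\sigma_2(V-V_i)H_i-\mu V V_i$ along the segments joining the solution to the supersolution, i.e.\ you need $s\psi_0\le V(\cdot,t)$ for all $t\ge 0$, not merely $s\psi_0<\hat V$; this is harmless --- simply add the requirement $\eta_0\le\min_{\bar\Omega}(\hat V-s\psi_0)$ to your choice of $\eta_0$, which your ordering of choices ($\epsilon$, then $s$, then $\eta_0$, then $\delta$) permits. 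It is also worth noting that your mechanism --- the stabilizing sign of $-\sigma_2 V_iH_i$ evaluated at the neutral eigenvector --- is exactly the one the paper exploits later for the global attractivity part of the $R_0=1$ theorem, via the strictly decreasing function $c(t;w_0)$; your argument shows the same mechanism already yields the local stability statement.
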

\begin{proof}
Let $\epsilon>0$ be given. Denote $V=V_u+V_i$. By Lemma \ref{lemma_exp}, there exists $\delta, M_1, b>0$ such that, if $\|V_{u0}+V_{i0}-\hat V\|_\infty<2\delta$, then 
\begin{equation}\label{exp}
\|V-\hat V\|_\infty\le  M_1\|V_{u0}+V_{i0}-\hat V\|_\infty e^{-bt}.
\end{equation}
Suppose that $(H_{i0}, V_{u0}, V_{i0})$ satisfies $\|H_{i0}\|_\infty\le \delta$, $\|V_{u0}-\hat V\|_\infty\le \delta$ and $\|V_{i0}\|_\infty\le \delta$ such that  \eqref{exp} holds.

Since $\kappa_0$ has the same sign with $R_0-1$, we have $\kappa_0=0$. Let $T(t)$ be the positive semigroup generated by $A=B+C$ in $C(\bar\Omega; \mathbb{R}^2)$. Then there exists $M_2>0$ such that $\|T(t)\|\le M_2$ for all $t\ge 0$. By \eqref{main}-\eqref{maini}, we have
\begin{eqnarray*}
\begin{pmatrix}
H_i(\cdot, t)\\
V_i(\cdot, t)
\end{pmatrix}&=& T(t)
\begin{pmatrix}
H_{i0}\\
V_{i0}
\end{pmatrix}
+\int_0^t T(t-s)
\begin{pmatrix}
0\\
\sigma_2(V_u(\cdot, s)-\hat V)H_i(\cdot, s)-\mu (V(\cdot, s)-\hat V)V_i(\cdot, s)
\end{pmatrix}
ds\\
&\le&T(t)
\begin{pmatrix}
H_{i0}\\
V_{i0}
\end{pmatrix}
+\int_0^t T(t-s)
\begin{pmatrix}
0\\
\sigma_2(V(\cdot, s)-\hat V)H_i(\cdot, s)-\mu (V(\cdot, s)-\hat V)V_i(\cdot, s)
\end{pmatrix}
ds
\end{eqnarray*}

Let $u(t)=\max\{\|H_i(\cdot, t)\|_\infty, \|V_i(\cdot, t)\|_\infty\}$. By \eqref{exp}, we have
\begin{eqnarray*}
u(t)&\le& M_2u(0)+ 2M_2\max\{\|\sigma_2\|_\infty, \|\mu\|_\infty\} \int_0^t \|V_u(\cdot, s)+V_i(\cdot, s)-\hat V\|_\infty u(s) ds\\
&\le&M_2\delta+ \delta C \int_0^t  e^{-bs} u(s) ds
\end{eqnarray*}
where $C=4M_1M_2\max\{\|\sigma_2\|_\infty, \|\mu\|_\infty\}$. Then by Gronwall's inequality, 
\begin{equation}\label{ep1}
u(t)=\max\{\|H_i(\cdot, t)\|_\infty, \|V_i(\cdot, t)\|_\infty\}\le M_2 e^{C\delta/b}\delta.
\end{equation}
Moreover, by \eqref{exp}, we have
\begin{equation}\label{ep2}
\|V_u(\cdot, t)-\hat V\|_\infty \le \|V_u(\cdot, t)+V_i(\cdot, t)-\hat V\|_\infty+ \|V_i(\cdot, t)\|_\infty\le 2M_1\delta+M_2e^{C\delta/b}\delta.
\end{equation}
Combining \eqref{ep1}-\eqref{ep2}, we can find $\delta=\delta(\epsilon)>0$ such that 
$$
\|H_i(\cdot, t)\|_\infty\le \epsilon, \|V_u(\cdot, t)-\hat V\|_\infty\le \epsilon,  \text{ and } \|V_i(\cdot, t)\|_\infty\le \epsilon.
$$
Since $\epsilon>0$ is arbitrary, $E_1$ is locally stable. 
\end{proof}

We then prove the global attractivity of $E_1$ when $R_0=1$.
\begin{theorem}
If $R_0=1$, then $E_1$ is globally stable in the sense that it is locally stable and, for any nonnegative initial data $(H_{i0}, V_{u0}, V_{i0})$ with $ V_{u0}+V_{i0}\neq 0$,
$$
\lim_{t\rightarrow\infty} \|(H_i(\cdot, t), V_u(\cdot, t), V_i(\cdot, t))-E_1\|_\infty=0.
$$
\end{theorem}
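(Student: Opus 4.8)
The plan is to combine the local stability already established in Lemma~\ref{lemma_locals} with a global attractivity argument analogous to the subsolution/supersolution strategy used in Theorem~\ref{theorem_Rless}, but now adapted to the borderline case $\kappa_0=0$. Since local stability is in hand, it suffices to prove global attractivity: for any initial data with $V_{u0}+V_{i0}\neq 0$ we must show $(H_i,V_u,V_i)\to E_1=(0,\hat V,0)$. By Lemma~\ref{lemma_V}, the component $V:=V_u+V_i$ already converges to $\hat V$ uniformly, so the entire difficulty reduces to showing $H_i(\cdot,t)\to 0$ and $V_i(\cdot,t)\to 0$; the convergence $V_u\to\hat V$ then follows since $V_u=V-V_i$.

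First I would set up a comparison system for $(H_i,V_i)$. Fix a small $\epsilon>0$. By Lemma~\ref{lemma_V} there is a $t_0$ with $V_u(x,t)+V_i(x,t)<\hat V(x)+\epsilon$ for $t\ge t_0$, so from the first and third equations of \eqref{main} one gets the differential inequality
\begin{equation*}
\left\{
\begin{array}{ll}
\frac{\partial}{\partial t}H_i-\triangledown\cdot\delta_1\triangledown H_i\le -\lambda H_i+\sigma_1 H_u V_i, &x\in\Omega,\ t>t_0,\\
\frac{\partial}{\partial t}V_i-\triangledown\cdot\delta_2\triangledown V_i\le \sigma_2(\hat V+\epsilon)H_i-\mu\hat V V_i, &x\in\Omega,\ t>t_0,
\end{array}
\right.
\end{equation*}
which makes $(H_i,V_i)$ a subsolution of the linear cooperative system whose principal eigenvalue is $\kappa_1^\epsilon:=\kappa_1$ of the perturbed problem with coefficient $\sigma_2(\hat V+\epsilon)$ in place of $\sigma_2\hat V$. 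The obstacle, exactly as flagged in the introduction, is that at $R_0=1$ we have $\kappa_0=0$, so the unperturbed eigenvalue is zero and the naive exponential comparison gives only boundedness, not decay. The resolution is to exploit monotonicity of the principal eigenvalue: because $\kappa_1(\delta,f)$ is strictly increasing in $f$, and because letting $\epsilon\downarrow 0$ recovers $\kappa_0=0$, the perturbed eigenvalue $\kappa_1^\epsilon$ is strictly positive for $\epsilon>0$ but tends to $0$. So a single fixed $\epsilon$ will not produce decay; instead I would run the comparison argument to first conclude that $(H_i,V_i)$ is ultimately bounded by $M e^{\kappa_1^\epsilon t}$ is \emph{not} what we want.

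The correct route is a two-step squeeze. Since $V_i\le V\to\hat V$, one has the genuinely sublinear upper bound $(\hat V-V_i)^+\le\hat V$, and for $t$ large $V_u-\hat V\to 0$; feeding this back, the true driving term in the $V_i$ equation is $\sigma_2 V_u H_i-\mu(V_u+V_i)V_i$, whose linearization at $E_1$ is precisely the operator with principal eigenvalue $\kappa_0=0$ associated to a positive eigenvector $(\varphi_0,\psi_0)$. I would therefore argue via the adjoint eigenfunction: letting $(\varphi_0^*,\psi_0^*)\gg 0$ be the positive eigenvector of the adjoint problem for eigenvalue $0$, I would form the Lyapunov-type functional $L(t)=\int_\Omega(\varphi_0^* H_i+\psi_0^* V_i)\,dx$ and compute $\frac{d}{dt}L(t)$. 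Because $\kappa_0=0$, the linear part contributes zero, and the only surviving contributions come from the genuinely negative nonlinear terms $-\sigma_2 V_i H_i$ (from replacing $(\hat V-V_i)^+$ by $\hat V-V_i$ where $V_i>0$) and $-\mu V_i^2$, together with the error term $F(x,t)=\sigma_2(V_u-\hat V)H_i-\mu(V-\hat V)V_i$ which, by \eqref{exp}/Lemma~\ref{lemma_V}, is integrable in $t$ since $\|V-\hat V\|_\infty$ decays exponentially. This yields $\frac{d}{dt}L\le -c\int_\Omega(\sigma_2 V_iH_i+\mu V_i^2)\,dx+g(t)$ with $\int_0^\infty|g(t)|\,dt<\infty$, forcing $L(t)$ to converge and $\int_0^\infty\int_\Omega V_i^2\,dx\,dt<\infty$; combined with the uniform $C^{1}$ parabolic bounds (the semiflow is compact for $t>0$) this gives $V_i\to 0$, and then $H_i\to 0$ from its own equation. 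The hard part is making the exponential decay of $\|V-\hat V\|_\infty$ (Lemma~\ref{lemma_exp}) carry the error term $g(t)$ to integrability while the neutral linear mode is killed by the nonlinear dissipation; once $V_i\to 0$ and $H_i\to 0$ are secured, $V_u=V-V_i\to\hat V$ is immediate, completing the proof.
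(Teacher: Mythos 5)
Your strategy (local stability from Lemma~\ref{lemma_locals}, then an adjoint-eigenfunction functional $L(t)=\int_\Omega(\varphi_0^*H_i+\psi_0^*V_i)\,dx$ to kill the neutral mode, with the exponential convergence of $V=V_u+V_i$ supplying an integrable error) is genuinely different from the paper's argument, but the central computation as you state it is wrong, and your conclusion does not follow from the corrected version. Write the $V_i$-equation of \eqref{main} exactly as linear part plus remainder: since
\[
\sigma_2V_uH_i-\mu(V_u+V_i)V_i=\bigl[\sigma_2\hat VH_i-\mu\hat VV_i\bigr]+\sigma_2(V_u-\hat V)H_i-\mu(V-\hat V)V_i,
\]
your $F(x,t)=\sigma_2(V_u-\hat V)H_i-\mu(V-\hat V)V_i$ is \emph{already the entire} nonlinear remainder; the terms $-\sigma_2V_iH_i$ and $-\mu V_i^2$ you list as additional "surviving" dissipation have been double-counted — they do not appear alongside $F$. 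Worse, $F$ itself is not integrable in time as claimed: its first piece involves $V_u-\hat V$, which does not decay a priori (only $V-\hat V$ does; $V_u-\hat V=(V-\hat V)-V_i$). The correct exact decomposition is
\[
\sigma_2V_uH_i-\mu(V_u+V_i)V_i=\bigl[\sigma_2\hat VH_i-\mu\hat VV_i\bigr]-\sigma_2V_iH_i+(V-\hat V)(\sigma_2H_i-\mu V_i),
\]
so after pairing with $(\varphi_0^*,\psi_0^*)$ the only genuine dissipation is $-\int_\Omega\psi_0^*\sigma_2V_iH_i\,dx$; there is no $-\mu V_i^2$ term. Hence your functional yields $\int_0^\infty\!\!\int_\Omega V_iH_i\,dx\,dt<\infty$, \emph{not} $\int_0^\infty\!\!\int_\Omega V_i^2\,dx\,dt<\infty$, and the final step collapses: knowing $\int_\Omega V_iH_i\,dx\to0$ only tells you that every point $(h,v)$ of the $\omega$-limit set satisfies $hv\equiv0$, which still permits nontrivial limit points of the form $(h,0)$, $(0,v)$, or pairs with disjoint supports.

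The missing idea needed to close this is an invariance-plus-positivity argument: by the theory of asymptotically autonomous semiflows the $\omega$-limit set of $(H_i,V_i)$ is invariant under the limit semiflow $\tilde\Phi$ of \eqref{main_reduced}, and by the strong positivity in Lemma~\ref{lemma_positive} any nontrivial point of that set is mapped by $\tilde\Phi(1)$ to a point with both components strictly positive on $\bar\Omega$, contradicting $hv\equiv0$; together with a Barbalat-type step (uniform continuity of $t\mapsto\int_\Omega V_iH_i\,dx$ from parabolic estimates and Lemma~\ref{lemma_bound}) this repairs your proof. Note for contrast that the paper avoids these difficulties entirely: it works on the invariant set $\mathbb{M}=\{V_{u0}+V_{i0}=\hat V\}$, where $(H_i,V_i)$ solves \eqref{main_reduced} exactly, and shows by the comparison principle that the sup-type quantity $c(t;w_0)=\inf\{\tilde c:\ H_i(\cdot,t)\le\tilde c\varphi_0,\ V_i(\cdot,t)\le\tilde c\phi_0\}$ is strictly decreasing, then rules out nontrivial $\omega$-limit points because $c$ would have to be both constant and strictly decreasing along them.
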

\begin{proof}
Let 
$$
\mathbb{M}=\{(H_{i0}, V_{u0}, V_{i0})\in C(\bar\Omega; \mathbb{R}^3_+): \  V_{u0}+V_{i0}=\hat V\}.
$$ 
It suffices to show: (a) $E_1$ is a locally stable steady state of \eqref{main}-\eqref{maini}; (b) The stable set (or basin of attraction) of $E_1$ contains $\mathbb{M}$; (c) The $\omega-$limit set of $(H_{i0}, V_{u0}, V_{i0})$ with $ V_{u0}+V_{i0}\neq 0$ is contained in $\mathbb{M}$. 

By Lemma \ref{lemma_locals}, $E_1$ is locally  stable, which gives (a). If $V_{u0}+V_{i0}\neq 0$, we have $V_u(\cdot, t)+V_i(\cdot, t)\rightarrow \hat V$ in $C(\bar\Omega)$ as $t\rightarrow \infty$, which implies (c).   

To prove (b), suppose $(H_{i0}, V_{u0}, V_{i0})\in \mathbb{M}$. Then the solution of \eqref{main}-\eqref{maini} satisfies $V_u(x, t)+V_i(x, t)=\hat V(x)$ for all $x\in\bar\Omega$ and $t\ge 0$. Hence $(H_i(x, t), V_i(x, t))$ is the solution of the limit problem \eqref{main_reduced}.

Since $R_0=1$, we have $\kappa_0=0$. Let $(\varphi_0, \phi_0)$ be a positive eigenvector associated with $\kappa_0$ of the eigenvalue problem \eqref{eigg}. Motivated by \cite{cui2017dynamics, wu2017dynamics}, for any $w_0:=(H_{i0}, V_{i0})$, we define
$$
c(t; w_0):= \inf\{\tilde c\in\mathbb{R}: H_i(\cdot, t)\le \tilde c\varphi_0 \text{ and } V_i(\cdot, t)\le \tilde c\phi_0\}.
$$
Then $c(t; w_0)>0$ for all $t>0$. We now claim that $c(t; w_0)>0$ is strictly decreasing. To see that, fix $t_0>0$, and we define 
$\bar H_i(x, t)= c(t_0; w_0)\varphi_0(x)$ and $\bar V_i(x, t)=c(t_0; w_0)\phi_0(x)$ for all $t\ge t_0$ and $x\in\bar\Omega$. Then $(\bar H_i(x, t), \bar V_i(x, t))$ satisfies
\begin{equation}\label{compR1}
\left\{
\begin{array}{ll}
\frac{\partial}{\partial t}\bar H_i-\triangledown\cdot \delta_1\triangledown \bar H_i=-\lambda \bar H_i+\sigma_1 H_u\bar V_i,\ \ \ &x\in\Omega, t\ge t_0,\\
\frac{\partial}{\partial t}\bar V_i-\triangledown\cdot \delta_2\triangledown \bar V_i> \sigma_2 (\hat V-\bar V_i)^+\bar H_i-\mu\hat V\bar V_i,\ \ \ &x\in\Omega, t\ge t_0,\\
\frac{\partial }{\partial n}\bar H_i=\frac{\partial}{\partial n}\bar V_i=0,\ \ \ &x\in\partial\Omega, t\ge t_0,\\
\bar H_i(\cdot, t_0)\ge H_i(\cdot, t_0),\  \  \bar V_i(\cdot, t_0)\ge V_i(\cdot, t_0).
\end{array}
\right.
\end{equation}
By the comparison principle for cooperative systems, we have $(\bar H_i(x, t), \bar V_i(x, t))\ge (H_i(x, t), V_i(x, t))$ for all $x\in\bar\Omega$ and $t\ge t_0$. By the second equation of \eqref{compR1}, we get 
$$
\frac{\partial}{\partial t}\bar V_i-\triangledown\cdot \delta_2\triangledown \bar V_i> \sigma_2 (\hat V-\bar V_i)^+ H_i-\mu\hat V\bar V_i.
$$
By the comparison principle, $\bar V_i(x, t)> V_i(x, t)$ for all $x\in\bar\Omega$ and $t> t_0$. Then by the first equation of \eqref{compR1}, 
$$
\frac{\partial}{\partial t}\bar H_i-\triangledown\cdot \delta_1\triangledown \bar H_i>-\lambda \bar H_i+\sigma_1 H_u V_i.
$$
By the comparison principle, we have $\bar H_i(x, t)>H_i(x, t)$ for all $x\in\bar\Omega$ and $t>t_0$. Therefore, $c(t_0; w_0)\varphi_0(x)> H_i(x, t)$ and $c(t_0; w_0)\phi_0(x)> V_i(x, t)$ for all $x\in\bar\Omega$ and $t>t_0$. By the definition of $c(t; w_0)$, $c(t_0; w_0)>c(t; w_0)$ for all $t>t_0$. 
Since $t_0\ge 0$ is arbitrary,  $c(t; w_0)$ is strictly decreasing for $t\ge 0$. 

Let $\tilde \Phi(t)$ be the semiflow induced by the solution of the limit problem \eqref{main_reduced}. Let $\omega:=\omega(w_0)$ be the omega limit set of $w_0$. We claim that $\omega=\{(0, 0)\}$. Assume to the contrary that there exists a nontrivial $w_1\in \omega$. Then there exists $\{t_k\}$ with $t_k\rightarrow\infty$ such that 
$\tilde \Phi(t_k)w_0\rightarrow w_1$. Let $c_*=\lim_{t\rightarrow\infty} c(t; w_0)$. We have $c(t; w_1)=c_*$ for all $t\ge 0$. Actually this follows from the fact that $\tilde\Phi(t)w_1=\tilde\Phi(t)\lim_{t_k\rightarrow\infty}\tilde\Phi(t_k)w_0=\lim_{t_k\rightarrow\infty}\tilde\Phi(t+t_k)w_0$. However since $w_1$ is nontrivial, we can repeat the previous arguments to show that $c(t; w_1)$ is strictly decreasing. This is a contraction. Therefore $\omega=\{(0, 0)\}$, and $(H_i(\cdot, t), V_i(\cdot, t))\rightarrow (0, 0)$ in $C(\bar\Omega)$ as $t\rightarrow\infty$. Since $V_u(\cdot, t)+V_i(\cdot, t)=\hat V$, we have $V_u(\cdot, t)\rightarrow \hat V$ in $C(\bar\Omega)$ as $t\rightarrow 0$. This completes the proof. 
\end{proof}

\section{Concluding remarks}
In this paper, we define a basic reproduction number $R_0$ for the model \eqref{main}-\eqref{maini}, and show that it serves as the threshold value for the global dynamics of the model: If $R_0\le 1$, then disease free equilibrium $E_1$ is globally asymptotically stable; if $R_0>1$, the model has a unique endemic equilibrium $E_2$, which is globally asymptotically stable.  

As shown in Theorem \ref{theorem_ode}, the global dynamics of the corresponding ODE model of \eqref{main}-\eqref{maini} is determined by the magnitude of $\sigma_1\sigma_2H_u/\lambda\mu$. This motivates us to define the local basic reproductive number for model \eqref{main}-\eqref{maini}: 
$$
R(x):=R_1(x)R_2(x)=\dfrac{\sigma_1(x)H_u(x)}{\lambda(x)}\dfrac{\sigma_2(x)}{\mu(x)}.
$$
Since $R_0$ is difficult to visualize, it is natural to ask: are there any connections between $R_0$ and $R$?  As the global dynamics of both models are determined by the magnitude of the basic reproduction number, this is equivalent to ask: how the diffusion rates change the dynamics of the model \eqref{main}-\eqref{maini}, and what is the relation between the reaction-diffusion model \eqref{main}-\eqref{maini} and the corresponding reaction system (the model without diffusion)? We will explore these questions in a forthcoming paper. 
Our main ingredient is the formula: 
$$
R_0=r(L_1R_1L_2R_2)
$$ 
with $L_1:=(\lambda-\triangledown\cdot \delta_1\triangledown)^{-1}\lambda$ and $L_2:=  (\mu\hat V-\triangledown\cdot \delta_2\triangledown)^{-1}\mu\hat V$. This formula establishes an interesting connection between $R_0$ and $R$ as we can prove 
$$r(L_1 L_2)=r(L_1)=r(L_2)=1.
$$ 
Consequences of this formula are:
\begin{enumerate}
\item If $R_i(x)$, $i=1, 2$, is constant, then $R_0=R$; 
\item $R_0>1$ if $R_i(x)>1$, $i=1, 2$, for all $x\in\bar\Omega$ and $R_0<1$ if $R_i(x)<1$, $i=1, 2$, for all $x\in\bar\Omega$. 
\end{enumerate}
Furthermore, when the diffusion coefficients $\delta_1$ and $\delta_2$ are constant, we prove
\begin{itemize}
\item $\lim_{(\delta_1, \delta_2)\rightarrow (\infty, \infty)} R_0= \frac{\int_\Omega\lambda R_1 dx}{\int_\Omega\lambda dx}\frac{\int_\Omega\mu R_2dx}{\int_\Omega\mu dx}$;

\item $\lim_{\delta_1\rightarrow 0}\lim_{\delta_2\rightarrow 0} R_0=\lim_{\delta_2\rightarrow 0}\lim_{\delta_1\rightarrow 0} R_0= \max\{R(x): x\in\bar\Omega\}$.
\end{itemize}

Finally, we remark that our approach is applicable to several other reaction-diffusion models (e.g. \cite{lai2014repulsion, lai2016reaction,  ren2017reaction, pankavich2016mathematical}). For example, the reaction-diffusion within-host  model of viral dynamics studied in \cite{ren2017reaction, pankavich2016mathematical} is
\begin{equation}
\left\{
\begin{array}{lll} \label{mainpan}
&\frac{\partial}{\partial t}T-\triangledown\cdot \delta_1(x)\triangledown T=\lambda(x)-\mu T-k_1TV(-k_2TI),\ \ \ &x\in\Omega, t>0,\\
&\frac{\partial}{\partial t}I-\triangledown\cdot \delta_2(x)\triangledown I=k_1TV (+k_2TI)-\mu_iI\ \ \ &x\in\Omega, t>0,\\
&\frac{\partial}{\partial t}V-\triangledown\cdot \delta_3(x)\triangledown V=N(x)I-\mu_vV,\ \ \ &x\in\Omega, t>0,
\end{array}
\right.
\end{equation}
where $T, I$ and $V$ denote the density of healthy cells, infected cells and virions, respectively. If $\delta_1=\delta_2$ and $\mu=\mu_i$, then $E:=T+I$ satisfies 
$$
\frac{\partial}{\partial t} E-\triangledown\cdot \delta_1(x)\triangledown E=\lambda(x)-\mu E.
$$ 
This equation has a unique positive steady state $\hat E$ and $E(\cdot, t)\rightarrow \hat E$ in $C(\bar\Omega)$ as $t\rightarrow\infty$. Therefore \eqref{mainpan} also has a limit system which is monotone:
\begin{equation*}
\left\{
\begin{array}{lll} 
&\frac{\partial}{\partial t}I-\triangledown\cdot \delta_2(x)\triangledown I=k_1(\hat E-I)^+V (+k_2(\hat E-I)^+I)-\mu_iI\ \ \ &x\in\Omega, t>0,\\
&\frac{\partial}{\partial t}V-\triangledown\cdot \delta_3(x)\triangledown V=N(x)I-\mu_vV,\ \ \ &x\in\Omega, t>0.
\end{array}
\right.
\end{equation*}
For the models in \cite{lai2014repulsion, lai2016reaction}, our method is applicable when there are no chemotaxis. The analysis of the basic reproduction number of all these models can also be done similarly.

\section{Appendix}
 Let $H_i(t), V_u(t)$ and $V_i(t)$ be the density of infected hosts, uninfected vectors, and infected vectors at time $t$ respectively. Then the model is
\begin{equation}
\left\{
\begin{array}{ll}
\frac{d}{d t}H_i(t)=-\lambda H_i(t)+\sigma_1 H_u V_i(t),\ \ \   & t>0,\\
\frac{d}{d t}V_u(t)=-\sigma_2V_u(t) H_i(t)+\beta (V_u(t)+V_i(t))-\mu (V_u(t)+V_i(t))V_u(t),\ \ \ & t>0,\\
\frac{d}{d t}V_i(t)=\sigma_2 V_u(t)H_i(t)-\mu (V_u(t)+V_i(t))V_i(t),\ \ \ & t>0.
\end{array}
\label{1.1}
\right.
\end{equation}
with initial value
$$
(H_i(0),V_u(0),V_i(0)) \in M:=\mathbb{R}_+^3.
$$
The basic reproduction number $R_0$ is defined as
$$
R_0:=\frac{\sigma_1\sigma_2H_u}{\lambda\mu}.
$$
The steady states of \eqref{1.1} are $ss_0=(0, 0, 0)$, $ss_1=(0, \beta/\mu, 0)$, and
\begin{eqnarray*}
ss_2&=&\left(  \frac{\beta(H_u\sigma_1\sigma_2-\lambda\mu)}{\lambda\mu\sigma_2}, \frac{\beta\lambda}{H_u\sigma_1\sigma_2}, \frac{\beta(H_u\sigma_1\sigma_2-\lambda\mu)}{H_u\mu\sigma_1\sigma_2} \right)\\
&=&\left(\frac{\beta(R_0-1)}{\sigma_2}, \frac{\beta}{R_0\mu}, \frac{\lambda\beta(R_0-1)}{H_u\sigma_1\sigma_2}\right)\\
&:=& (\hat H_i, \hat V_u, \hat V_i),
\end{eqnarray*}
which exists if and only if $R_0>1$.

If we add the two last equations of \eqref{1.1} then $N(t):=V_u(t)+V_i(t)$ satisfies the logistic equation
\begin{equation}
\frac{d}{dt} N(t)=\beta N(t)-\mu N^2(t).
\label{1.2}
\end{equation}
We decompose the domain $M:=\mathbb{R}_+^3$ into the partition
$$
M=\partial M_0 \cup M_0,
$$
where 
$$
\partial M_0:=\lbrace (H_i,V_u,V_i) \in M: H_i+V_i=0 \text{ or }  V_u+V_i=0  \rbrace
$$
and
$$
M_0:=\lbrace (H_i,V_u,V_i) \in M: H_i+V_i>0 \text{ and }  V_u+V_i>0  \rbrace=M \setminus \partial M_0.
$$
Biologically, we can interpret $\partial M_0$ as the states without vectors or infected individuals. 
The subregions $\partial M_0$ and $M_0$ are both positively invariant by the semiflow generated (\ref{1.1}).
We can also decompose $M$ with respect to the subdomain
$$
\partial M_1:=\lbrace (H_i,V_u,V_i) \in M: V_u+V_i=0  \rbrace
$$
and
$$
M_1:=\lbrace (H_i,V_u,V_i) \in M: V_u+V_i>0  \rbrace.
$$
Since $N(t):=V_u(t)+V_i(t)$ always satisfies the logistic equation \eqref{1.2},  the subregions $\partial M_1$ and $M_1$ are both positively invariant by the semiflow generated \eqref{1.1}. 
\begin{lemma}\label{lemma_M1}
Both $\partial M_1$ and $M_1$ are positively invariant by the semiflow generated \eqref{1.1}. Moreover,
\begin{enumerate}
\item if $(H_i(0),V_u(0),V_i(0)) \in \partial M_1$, then
\begin{equation*}
\lim_{t\rightarrow\infty} (H_i(t), V_u(t), V_i(t))= (0, 0, 0);
\end{equation*}

\item  if $(H_i(0),V_u(0),V_i(0)) \in M_1$, then
\begin{equation*}
\lim_{t\rightarrow\infty} V_u(t)+ V_i(t)= \frac{\beta}{\mu}.
\end{equation*}
\end{enumerate}
\end{lemma}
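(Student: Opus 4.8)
The plan is to exploit the fact, already noted just before the statement, that the sum $N(t):=V_u(t)+V_i(t)$ satisfies the autonomous logistic equation \eqref{1.2} independently of $H_i$. This decoupling reduces the whole lemma to elementary one-dimensional ODE facts. First I would record positive invariance: since $N\equiv 0$ is a solution of \eqref{1.2}, uniqueness of solutions forbids any trajectory from crossing the level $N=0$, so $N(0)=0$ forces $N(t)\equiv 0$ (keeping the orbit in $\partial M_1$), while $N(0)>0$ forces $N(t)>0$ for all $t\ge 0$ (keeping the orbit in $M_1$).

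For assertion (1), starting in $\partial M_1$ means $V_u(0)+V_i(0)=0$; since both components are nonnegative this gives $V_u(0)=V_i(0)=0$, and invariance yields $V_u(t)=V_i(t)=0$ for all $t\ge 0$. Substituting into the first equation of \eqref{1.1} leaves the scalar linear equation $\frac{d}{dt}H_i=-\lambda H_i$, whose solution $H_i(t)=H_i(0)e^{-\lambda t}$ decays to $0$. Hence $(H_i(t),V_u(t),V_i(t))\to(0,0,0)$.

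For assertion (2), starting in $M_1$ gives $N(0)>0$, and the logistic equation \eqref{1.2} has the globally attracting positive equilibrium $\beta/\mu$ on $(0,\infty)$: rewriting $\frac{d}{dt}N=\mu N(\beta/\mu-N)$ shows that $N$ is increasing below $\beta/\mu$ and decreasing above it, so every positive orbit converges monotonically to $\beta/\mu$. Therefore $V_u(t)+V_i(t)\to\beta/\mu$.

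There is no genuine obstacle here; the entire content of the lemma is carried by the decoupling of $N$. The only points deserving a word of care are invoking nonnegativity of $V_u$ and $V_i$ to pass from $V_u(0)+V_i(0)=0$ to the vanishing of each component, and citing uniqueness of solutions to the initial value problem to justify the positive invariance of $\partial M_1$ and $M_1$.
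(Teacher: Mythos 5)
Your proof is correct and follows exactly the route the paper intends: the paper states this lemma without any proof, treating it as an immediate consequence of the observation (made just before the statement) that $N(t)=V_u(t)+V_i(t)$ satisfies the decoupled logistic equation \eqref{1.2}. Your write-up simply supplies the routine details of that argument (uniqueness for invariance, nonnegativity to split $V_u(0)+V_i(0)=0$ into components, linear decay of $H_i$, and monotone convergence of the scalar logistic flow), all of which are sound.
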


If $(H_i(0),V_u(0),V_i(0)) \in M_1$ the long time behavior of \eqref{1.1} is characterized by
\begin{equation}
\left\{
\begin{array}{ll}
\frac{d}{d t}H_i(t)=-\lambda H_i +\sigma_1 H_u V_i,\ \ \   & t>0,\\
\frac{d}{d t}V_i(t)=\sigma_2 (\beta/\mu-V_i)^+H_i-\beta V_i,\ \ \ & t>0,\\
H_i(0)=H_{i0}\ge 0, \ V_i(0)=V_{i0}\ge 0.
\end{array}
\label{reduced}
\right.
\end{equation}

\begin{lemma}\label{lemma_reducedC}
Suppose $R_0>1$. Then \eqref{reduced} has a unique positive steady state $(\hat H_i, \hat V_i)$. Moreover, $(\hat H_i, \hat V_i)$ is locally asymptotically stable, and if $H_{i0}+V_{i0}\neq 0$, then the solution $(H_i, V_i)$ of \eqref{reduced} satisfies 
$$
\lim_{t\rightarrow\infty} (H_i(t), V_i(t))= (\hat H_i, \hat V_i).
$$
\end{lemma}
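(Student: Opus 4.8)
The plan is to treat \eqref{reduced} as a planar cooperative system and to mirror, in this simpler ODE setting, the monotone-dynamics argument used for the reaction--diffusion limit problem \eqref{main_reduced}. Writing $F_1(H_i,V_i)=-\lambda H_i+\sigma_1 H_u V_i$ and $F_2(H_i,V_i)=\sigma_2(\beta/\mu-V_i)^+H_i-\beta V_i$, one checks $\partial F_1/\partial V_i=\sigma_1 H_u\ge 0$ and $\partial F_2/\partial H_i=\sigma_2(\beta/\mu-V_i)^+\ge 0$, so the induced flow is monotone and the comparison principle for cooperative systems applies (as in \cite{smith1995monotone}). Existence and uniqueness of the positive steady state I would obtain directly: setting the right-hand sides to zero gives $H_i=\sigma_1 H_u V_i/\lambda$ from the first equation, and substituting into the second and dividing by $V_i>0$ forces $\beta/\mu-V_i>0$ and yields the explicit value $\hat V_i=(\beta/\mu)(R_0-1)/R_0$, which is positive precisely when $R_0>1$. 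This is the ODE counterpart of Lemmas \ref{lemma_po} and \ref{lemma_unique}, except that here uniqueness is immediate from the explicit formula rather than from a sub-homogeneity argument.

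For local asymptotic stability I would linearise at $(\hat H_i,\hat V_i)$. Since $\hat V_i<\beta/\mu$ the positive part is differentiable there, and the Jacobian has trace $-\lambda-\sigma_2\hat H_i-\beta<0$ and, using the steady-state identity $\sigma_1\sigma_2 H_u(\beta/\mu-\hat V_i)=\lambda\beta$, determinant equal to $\lambda\sigma_2\hat H_i>0$. Hence both eigenvalues have negative real part and $(\hat H_i,\hat V_i)$ is locally asymptotically stable.

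For global attractivity I would reproduce the squeezing argument of Lemma \ref{lemma_reduced}. Choosing $\epsilon>0$ small makes $(\epsilon\hat H_i,\epsilon\hat V_i)$ a subsolution (the first component of the field vanishes by linearity, and the second is nonnegative because $\sigma_2\hat H_i\,\beta/\mu>\beta\hat V_i$ by the steady-state identity, so the inequality persists for small $\epsilon$), while choosing $k$ large makes $(k\hat H_i,k\hat V_i)$ a supersolution (then $k\hat V_i>\beta/\mu$ kills the reaction term and the second component is negative). By monotonicity, $\tilde\Phi(t)(\epsilon\hat H_i,\epsilon\hat V_i)$ increases and $\tilde\Phi(t)(k\hat H_i,k\hat V_i)$ decreases, each to a positive steady state; by uniqueness both limits equal $(\hat H_i,\hat V_i)$. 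A short positivity argument (analogous to Lemma \ref{lemma_positive}) shows that any nontrivial data gives $H_i(t),V_i(t)>0$ for $t>0$, so after some $t_1>0$ the orbit is trapped between $(\epsilon\hat H_i,\epsilon\hat V_i)$ and $(k\hat H_i,k\hat V_i)$, and monotonicity squeezes it to $(\hat H_i,\hat V_i)$.

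I expect the only mildly delicate point to be the non-smoothness of $(\beta/\mu-V_i)^+$: it is globally Lipschitz, so solutions are well defined and the comparison principle still holds, but one must track whether $V_i$ lies below or above $\beta/\mu$ when verifying the sub- and supersolution inequalities. Everything else is routine for a two-dimensional cooperative system, and indeed this appendix is intended mainly to expose, in a transparent finite-dimensional form, the monotonicity-plus-squeezing strategy underlying the PDE proof.
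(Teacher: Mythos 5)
Your proposal is correct and follows essentially the same route as the paper: explicit computation of the unique positive steady state, the cooperative structure of \eqref{reduced}, and the squeezing argument between the scaled sub/supersolutions $(\epsilon\hat H_i,\epsilon\hat V_i)$ and $(k\hat H_i,k\hat V_i)$ via the monotone semiflow (the paper invokes \cite[Proposition 3.2.1]{smith1995monotone} at this step). The only difference is cosmetic: you verify local asymptotic stability by an explicit trace/determinant computation of the Jacobian, whereas the paper lets stability follow from the monotone trapping argument itself; both are valid.
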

\begin{proof}
The uniqueness of the positive steady state $(\hat H_i, \hat V_i)$ can be checked directly when $R_0>1$. Let $D=\mathbb{R}_+^2$. Then $D$ is invariant for \eqref{reduced}. It is not hard to show that the solution of \eqref{reduced} is bounded.  

Let $F_1(H_i, V_i)=-\lambda H_i+\sigma_1 H_u V_i$ and $F_2(H_i, V_i)=\sigma_2 (\beta/\mu-V_i)^+H_i-\beta V_i$.  Then $\partial F_1/\partial V_i \ge 0$ and $\partial F_2/\partial H_i \ge 0$ on $D$. So \eqref{reduced} is cooperative. Let $\tilde \Phi(t): D\rightarrow D$ be the semiflow generated by the solution of \eqref{reduced}. Then $\tilde \Phi(t)$ is monotone. 

If $H_{i0}+V_{i0}\neq 0$, then $H_i(t)>0$ and $V_i(t)>0$ for all $t>0$. So without loss of generality, we may assume $H_{i0}> 0$ and $V_{i0}> 0$. We can choose $\delta$ small such that $F_1(\delta \hat H_i, \delta \hat V_i)\ge 0$, $F_2(\delta \hat H_i, \delta \hat V_i)\ge 0$, $H_{i0}\ge \delta \hat H_i$, and $V_{i0}\ge \delta \hat V_i$. By \cite[Proposition 3.2.1]{smith1995monotone}, $\tilde \Phi(t)(\delta \hat H_i, \delta \hat V_i)$ is nondecreasing for $t\ge 0$ and converges to a positive steady state as $t\rightarrow \infty$. Since $(\hat H_i, \hat V_i)$ is the unique positive steady state, we must have $\tilde \Phi(t)(\delta \hat H_i, \delta \hat V_i)\rightarrow (\hat H_i, \hat V_i)$ as $t\rightarrow \infty$. 

Similarly, we may choose $k>0$ such that  $F_1(k \hat H_i, k \hat V_i)\le 0$, $F_2(k \hat H_i, k \hat V_i)\le 0$, $H_{i0}\le k\hat H_i$, and $V_{i0}\le k \hat V_i$. Then $\tilde \Phi(t)(\delta \hat H_i, \delta \hat V_i)$ is non-increasing for $t\ge 0$ and $\tilde \Phi(t)(k \hat H_i, k \hat V_i)\rightarrow (\hat H_i, \hat V_i)$ as $t\rightarrow \infty$. By the monotonicity of $\tilde\Phi(t)$, we have $\tilde\Phi(t)(\delta\hat H_i, \delta \hat V_i)\le \tilde\Phi(t)(H_{i0}, V_{i0})\le \tilde\Phi(t)(k\hat H_i, k \hat V_i)$ for $t\ge 0$. It then follows that $\tilde\Phi(t)(H_{i0}, V_{i0}) \rightarrow (\hat H_i, \hat V_i)$ as $t\rightarrow \infty$.
\end{proof}

 We now present a uniform persistence result.
\begin{lemma}\label{lemma_persistence2} If $R_0>1$, then the semiflow generated by \eqref{1.1} is uniformly persistent with respect to $(M_0, \partial M_0)$ in the sense that there exists $\epsilon>0$ such that, for any $(H_i(0),V_u(0),V_i(0)) \in M_0$, we have
\begin{equation}
\liminf_{t\rightarrow\infty} \inf_{w\in \partial M_0} |(H_i(t), V_u(t), V_i(t))- w|\ge \epsilon.
\end{equation}
\end{lemma}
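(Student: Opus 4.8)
The plan is to apply the uniform persistence theory of \cite{hale1989persistence} (together with \cite{zhao2013dynamical}), following the same four-step scheme used in the proof of Lemma \ref{lemma_persistence} but in the simpler ODE setting. First I would record point dissipativity: since $N(t)=V_u(t)+V_i(t)$ obeys the logistic equation \eqref{1.2}, $\limsup_{t\to\infty}N(t)\le\beta/\mu$, and then the $H_i$-equation of \eqref{1.1} gives $\limsup_{t\to\infty}H_i(t)\le \sigma_1 H_u\beta/(\lambda\mu)$ by comparison. Hence all orbits are ultimately bounded and the semiflow admits a compact global attractor.

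Next I would establish the invariance structure. Both $M_0$ and $\partial M_0$ are positively invariant (the boundary case is noted above; the interior case follows by checking positivity of each component exactly as in Step 1 of Lemma \ref{lemma_persistence}). On $\partial M_0$ I would characterize the limiting behavior: using Lemma \ref{lemma_M1} and a direct analysis of the cases $H_i+V_i=0$ and $V_u+V_i=0$, every orbit in $\partial M_0$ has $\omega$-limit set equal to $\{ss_0\}$ or $\{ss_1\}$, so the global attractor of the boundary flow covers $\{ss_0,ss_1\}$. I would then verify that $\{ss_0\}\cup\{ss_1\}$ is acyclic: the only chain to exclude is $ss_1\to ss_0$, and this cannot occur because a complete orbit in $W^u(ss_1)\cap W^s(ss_0)$ would have $N\to\beta/\mu$ as $t\to-\infty$, forcing $N\equiv\beta/\mu$ along the scalar logistic flow and contradicting $N\to 0$ as $t\to+\infty$.

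The crux is to show that each $ss_j$ is a uniform weak repeller for $M_0$, i.e. $W^s(ss_j)\cap M_0=\varnothing$. For $ss_0$ this is immediate, since any orbit from $M_0\subset M_1$ has $N(t)\to\beta/\mu>0$ and therefore stays bounded away from the origin. For $ss_1$ this is where $R_0>1$ enters. Linearizing the $(H_i,V_i)$-subsystem at $ss_1=(0,\beta/\mu,0)$ yields the Metzler matrix
$$
J=\begin{pmatrix}-\lambda & \sigma_1 H_u\\ \sigma_2\beta/\mu & -\beta\end{pmatrix},\qquad \det J=\lambda\beta(1-R_0)<0,
$$
so $J$ has a positive dominant eigenvalue with a positive eigenvector. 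Arguing by contradiction, if an orbit in $M_0$ satisfied $\limsup_{t\to\infty}|(H_i,V_u,V_i)(t)-ss_1|<\epsilon_0$, then for large $t$ one has $V_u\ge\beta/\mu-\epsilon_0$ and $V_i\le\epsilon_0$, so $(H_i,V_i)$ dominates, in the cooperative sense, the solution of a linear system whose matrix is a small perturbation of $J$ and still has a positive dominant eigenvalue. Since $H_i(t),V_i(t)>0$ for $t>0$ in $M_0$, the comparison data can be taken strictly below $(H_i,V_i)(t_0)$ along the positive eigenvector, and integration then forces exponential growth, contradicting boundedness. This gives $W^s(ss_1)\cap M_0=\varnothing$.

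Finally, with point dissipativity, the compact global attractor, the acyclic covering $\{ss_0,ss_1\}$, and each $ss_j$ a uniform weak repeller disjoint from $M_0$, \cite[Theorem 4.1]{hale1989persistence} yields uniform persistence with respect to $(M_0,\partial M_0)$, i.e. the desired $\epsilon>0$. The main obstacle is the weak-repeller property of $ss_1$: it is the only step that uses the threshold $R_0>1$, and it requires setting up the cooperative comparison against a linear flow with positive growth rate governed by $J$.
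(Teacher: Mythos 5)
Your proposal is correct and follows essentially the same route as the paper: point dissipativity via the logistic equation for $N=V_u+V_i$, invariance of $M_0$ and $\partial M_0$ with boundary dynamics collapsing to $\{ss_0, ss_1\}$, acyclicity of this covering, the weak-repeller property of $ss_1$ obtained by cooperative comparison with a linear system whose matrix is a small perturbation of the Metzler matrix $J$ (whose negative determinant under $R_0>1$ yields a positive Perron--Frobenius eigenvalue and eigenvector), and finally \cite[Theorem 4.1]{hale1989persistence}. The only cosmetic difference is your acyclicity argument (backward-in-time rigidity of the scalar logistic flow rather than the paper's observation that $W^s(ss_0)=\partial M_1$ forces $V_u\equiv V_i\equiv 0$ on the complete orbit), and both are valid.
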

\begin{proof}
We apply \cite[Theorem 4.1]{hale1989persistence} to prove this result. Let $\Phi(t): \mathbb{R}^3_+\rightarrow \mathbb{R}^3_+$ be the semiflow generated by \eqref{1.1}, i.e. $\Phi(t)w_0=(H_i(t), V_u(t), V_i(t))$ for $t\ge 0$, where $(H_i(t), V_u(t), V_i(t))$ is the solution of \eqref{1.1} with initial condition $w_0=(H_i(0),V_u(0),V_i(0))\in \mathbb{R}_+^3$.

 The semiflow $\Phi(t)$ is point dissipative in the sense that there exists $M>0$ such that $\limsup_{t\rightarrow\infty}\|\Phi(t)w_0\|\le M$ for any $w_0\in \mathbb{R}_+^3$. Actually, Lemma \ref{lemma_M1} implies that $\limsup_{t\rightarrow\infty} V_u(t)\le \beta/\mu$ and $\limsup_{t\rightarrow\infty} V_i(t)\le \beta/\mu$. By the first equation of \eqref{1.1}, we have $\limsup_{t\rightarrow\infty} H_i(t)\le \sigma_1\beta H_u/\mu\lambda$.

We note that $M_0$ and $\partial M_0$ are both invariant with respect to $\Phi(t)$. Moreover, the semiflow $\Phi_\partial (t):=\Phi(t)|_{\partial M_0}$, i.e. the restriction of $\Phi(t)$ on $\partial M_0$, admits a compact global attractor $A_\partial$. If $w_0=(H_i(0),V_u(0),V_i(0))\in \partial M_0$, then the $\omega-$limit set of $w_0$ is $\omega(w_0)=\{ss_0\}$ if $w_0\in M_1$ and $\omega(w_0)=\{ss_1\}$ if $w_0\in M_0\setminus M_1$. Hence we have
$$
\tilde A_\partial:= \cup_{w_0\in A_\partial} \omega(w_0)=\{ss_0\} \cup \{ss_1\}.
$$
This covering is acyclic since $\{ss_1\}\not\rightarrow\{ss_0\}$, i.e. $W^u(ss_1)\cap W^s(ss_0)=\varnothing$. To see this, suppose $w_0=(H_i(0),V_u(0),V_i(0))\in W^u(ss_1)\cap W^s(ss_0)$. By Lemma \ref{lemma_M1}, we have $w_0\in \partial M_1$. Let $(H_i(t),V_u(t),V_i(t))$ be the complete orbit through $w_0$, then $V_u(t)=V_i(t)=0$ for $t\in \mathbb{R}$. So $w_0\not\in W^u(ss_1)$, which is a contradiction.

We then show that $W^s(ss_0)\cap M_0=\varnothing$ and $W^s(ss_1)\cap M_0=\varnothing$. By Lemma \ref{lemma_M1}, $W^s(ss_0)=\partial M_1\subseteq \partial M_0$, and hence $W^s(ss_0)\cap M_0=\varnothing$. To see $W^s(ss_1)\cap M_0=\varnothing$, it suffices to prove that there exists $\epsilon >0$ such that, for any $w_0=(H_i(0),V_u(0),V_i(0))\in M_0$, the following inequality holds
\begin{equation}\label{weak}
\limsup_{t\rightarrow\infty} |\Phi(t)w_0-ss_1|\ge \epsilon.
\end{equation}
Assume to the contrary that \eqref{weak} does not hold. Let $\epsilon_0>0$ be given. Then there exists $w_0\in M_0$ such that 
\begin{equation*}
\limsup_{t\rightarrow\infty} |\Phi(t)w_0-ss_1|< \epsilon_0.
\end{equation*}
So there exists $t_0>0$ such that $\beta/\mu-\epsilon_0\le V_u(t)\le \beta/\mu+\epsilon_0$ and $V_i(t)\le \epsilon_0$ for $t\ge t_0$.

 By \eqref{1.1}, we have
\begin{equation}
\left\{
\begin{array}{ll}
\frac{d}{d t}H_i(t)=-\lambda H_i+\sigma_1 H_u V_i,\ \ \   & t>t_0,\\
\frac{d}{d t}V_i(t)\ge \sigma_2 (\frac{\beta}{\mu}-\epsilon_0)H_i-\mu (\frac{\beta}{\mu}+2\epsilon_0)V_i,\ \ \ & t>t_0.
\end{array}
\label{comparison0}
\right.
\end{equation}
The matrix associated with the right hand side of \eqref{comparison0} is
$$
A_{\epsilon_0}: =
\begin{bmatrix}
-\lambda & \sigma_1H_u\\
\sigma_2(\frac{\beta}{\mu}-\epsilon_0) & -\mu(\frac{\beta}{\mu}+2\epsilon_0)
\end{bmatrix},
$$
whose eigenvalues $\lambda_1$ and $\lambda_2$ satisfy that $\lambda_1+\lambda_2=-\lambda-\mu(\beta/\mu+2\epsilon_0)<0$ and  $\lambda_1\lambda_2=\lambda\mu(\beta/\mu+2\epsilon_0)-\sigma_1\sigma_2H_u(\beta/\mu-\epsilon_0)$. Since $R_0>1$, we can choose $\epsilon_0$ small such that $\lambda_1\lambda_2<0$. Hence either $\lambda_1>0>\lambda_2$ or $\lambda_2>0>\lambda_1$. Without loss of generality, suppose $\lambda_1>0>\lambda_2$. Then by the Perron-Frobenius theorem, there is an eigenvector $(\phi, \psi)$ associated with $\lambda_1$ such that $\phi>0$ and $\psi>0$.

Let $(\tilde H_i(t), \tilde V_i(t))$ be the solution of the following problem
\begin{equation}
\left\{
\begin{array}{ll}
\frac{d}{d t}\tilde H_i(t)=-\lambda\tilde H_i(t)+\sigma_1 H_u\tilde V_i(t),\ \ \   & t>t_0,\\
\frac{d}{d t}\tilde V_i(t)= \sigma_2 (\frac{\beta}{\mu}-\epsilon_0)\tilde H_i(t)-\mu (\frac{\beta}{\mu}+2\epsilon_0)\tilde V_i(t),\ \ \ & t>t_0,\\
\tilde H_i(t_0)=  \delta\phi,  \tilde V_i(t_0)= \delta \psi,
\end{array}
\label{comparison1}
\right.
\end{equation}
where $\delta$ is small such that $H_i(t_0)\ge \tilde H_i(t_0)$ and $V_i(t_0)\ge \tilde V_i(t_0)$. By \eqref{comparison0} and the comparison principle for cooperative systems, we have $(H_i(t), V_i(t))\ge (\tilde H_i(t), \tilde V_i(t))$ for $t\ge t_0$. We can check that the solution of \eqref{comparison1} is $(\tilde H_i(t), \tilde V_i(t))= (\delta\phi e^{\lambda_1 (t-t_0)}, \delta\psi e^{\lambda_1 (t-t_0)})$. It then follows from $\lambda_1>0$ that $\lim_{t\rightarrow\infty} H_i(t)=\infty$ and $\lim_{t\rightarrow\infty} V_i(t)=\infty$, which contradicts the boundedness of the solution.

Our conclusion now just follows from \cite[Theorem 4.1]{hale1989persistence}.
\end{proof}

We now present the result about the global dynamics of \eqref{1.1}.
\begin{theorem} \label{theorem_ode} The following statements hold.
\begin{enumerate}
\item $ss_0$ is unstable;  If $(H_i(0),V_u(0),V_i(0)) \in \partial M_1$, then
$$
\lim_{t\rightarrow\infty} (H_i(t), V_u(t), V_i(t))= ss_0;
$$

\item Suppose $R_0<1$. Then $ss_1$ is globally asymptotically stable, i.e. $ss_1$ is locally asymptotically stable and if $(H_i(0),V_u(0),V_i(0)) \in M_1$, then
$$
\lim_{t\rightarrow\infty} (H_i(t), V_u(t), V_i(t))= ss_1;
$$

\item Suppose $R_0>1$. Then $ss_1$ is unstable, and if  $(H_i(0),V_u(0),V_i(0)) \in \partial M_0\setminus \partial M_1$, then
$$
\lim_{t\rightarrow\infty} (H_i(t), V_u(t), V_i(t))= ss_1. 
$$
Moreover, $ss_2$ is globally asymptotically stable in the sense that $ss_2$ is locally asymptotically stable and  for any $(H_i(0),V_u(0),V_i(0)) \in M_0$,
$$
\lim_{t\rightarrow\infty} (H_i(t), V_u(t), V_i(t)) = ss_2.
$$
\end{enumerate}
\end{theorem}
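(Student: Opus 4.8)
The plan is to dispatch the three statements in turn, using the decomposition $M=\partial M_0\cup M_0$ together with the reduction to the planar limit system \eqref{reduced}, thereby mirroring the reaction--diffusion argument in the simpler ODE setting. The essential inputs are Lemma \ref{lemma_M1} (dynamics on the invariant boundary pieces), Lemma \ref{lemma_reducedC} (global stability of $(\hat H_i,\hat V_i)$ for the limit system when $R_0>1$), and Lemma \ref{lemma_persistence2} (uniform persistence when $R_0>1$). For statement (1), the instability of $ss_0$ follows most directly from the logistic equation \eqref{1.2} for $N=V_u+V_i$: near $N=0$ one has $\dot N\approx\beta N>0$, so orbits are pushed away from the origin. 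The convergence claim on $\partial M_1$ is immediate from Lemma \ref{lemma_M1}(1), since there $V_u\equiv V_i\equiv 0$ and $H_i$ obeys $\dot H_i=-\lambda H_i$.

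For statement (2) ($R_0<1$), I would first compute the Jacobian of \eqref{1.1} at $ss_1=(0,\beta/\mu,0)$, which is block triangular: it has a decoupled eigenvalue $-\beta$ together with the $2\times2$ block $\bigl(\begin{smallmatrix}-\lambda & \sigma_1 H_u\\ \sigma_2\beta/\mu & -\beta\end{smallmatrix}\bigr)$ governing $(H_i,V_i)$, whose trace is $-\lambda-\beta<0$ and whose determinant equals $\beta\lambda(1-R_0)>0$; by Routh--Hurwitz $ss_1$ is locally asymptotically stable. For global attractivity on $M_1$, I would use $N(t)\to\beta/\mu$ from Lemma \ref{lemma_M1}(2): for large $t$ the pair $(H_i,V_i)$ is dominated by the solution of a linear cooperative system whose coefficient matrix is a small perturbation of the block above, still Hurwitz when $R_0<1$, so the comparison principle forces $H_i,V_i\to0$, and then $V_u=N-V_i\to\beta/\mu$.

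For statement (3) ($R_0>1$), the instability of $ss_1$ comes from the same $2\times2$ block, whose determinant $\beta\lambda(1-R_0)$ is now negative, producing a positive eigenvalue. Convergence on $\partial M_0\setminus\partial M_1$ is read off Lemma \ref{lemma_M1}: there $H_i\equiv V_i\equiv0$ while $V_u>0$ solves the logistic equation, so the orbit tends to $ss_1$. The heart of the theorem is the global stability of $ss_2$ on $M_0$. Since $N(t)\to\beta/\mu$, the $(H_i,V_i)$-equations form an asymptotically autonomous system with limit system \eqref{reduced}; uniform persistence (Lemma \ref{lemma_persistence2}) guarantees $\liminf_{t\to\infty}(H_i(t)+V_i(t))>0$, so the $\omega$-limit set of $(H_i,V_i)$ meets $W=\{H_i+V_i\neq0\}$. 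By Lemma \ref{lemma_reducedC}, $(\hat H_i,\hat V_i)$ attracts all of $W$ under the limit semiflow, so the theory of asymptotically autonomous semiflows (\cite[Theorem 4.1]{thieme1992convergence}) yields $(H_i,V_i)\to(\hat H_i,\hat V_i)$, and hence $V_u=N-V_i\to\beta/\mu-\hat V_i=\hat V_u$. Local asymptotic stability of $ss_2$ I would confirm by a direct Routh--Hurwitz check on the $3\times3$ Jacobian at $ss_2$.

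The main obstacle is precisely the step that makes the PDE proof delicate as well: passing from the limit system \eqref{reduced} back to the full system \eqref{1.1}. The limit system carries the spurious attractor $(0,0)$ on $\partial W$, so convergence to $(\hat H_i,\hat V_i)$ can be concluded only after persistence rules out the $\omega$-limit set collapsing onto the boundary. Thus securing the uniform persistence estimate of Lemma \ref{lemma_persistence2}---the acyclicity of the covering $\{ss_0\}\cup\{ss_1\}$ and the repelling estimate near $ss_1$ that it encodes---and then correctly invoking the asymptotically autonomous convergence theorem is where the real work lies, whereas the boundary dynamics and the linearized stability computations are routine.
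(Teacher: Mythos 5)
Your proposal is correct and follows essentially the same route as the paper: for the central claim (global attractivity of $ss_2$ on $M_0$) you use exactly the paper's chain of Lemma \ref{lemma_M1} ($N\to\beta/\mu$), Lemma \ref{lemma_persistence2} (persistence keeps the $\omega$-limit set off the boundary attractor $(0,0)$ of the limit system), Lemma \ref{lemma_reducedC}, and the asymptotically autonomous convergence theorem of \cite{thieme1992convergence}, then recover $V_u=N-V_i$. The remaining items (instability of $ss_0$ and $ss_1$, local stability of $ss_1$ and $ss_2$ via the block-triangular Jacobians, boundary convergence) are parts the paper delegates to \cite{fitzgibbon2017outbreak} and Lemma \ref{lemma_M1}, and your routine linearization and comparison arguments for them are sound.
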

\begin{proof}
We only  prove the second convergence result in part 3 (see \cite{fitzgibbon2017outbreak} and Lemma \ref{lemma_M1} for the other parts). Since the solution of \eqref{reduced} is bounded, the omega limit set of the solution of \eqref{1.1} exists. 

 Suppose $(H_i(0),V_u(0),V_i(0)) \in M_0$. Then the solution $(H_i(t), V_u(t), V_i(t))$ of \eqref{1.1} satisfies that $H_i(t), V_u(t), V_i(t)>0$ for all $t>0$. Since $V_u(0)+V_i(0)\neq 0$, we have $V_u(\cdot, t)+V_i(\cdot, t)\rightarrow  \beta/\mu$ as $t\rightarrow \infty$. So the limit system of 
 \begin{equation*}
\left\{
\begin{array}{ll}
\frac{d}{d t}H_i(t)=-\lambda H_i+\sigma_1 H_u V_i,\ \ \   & t>0,\\
\frac{d}{d t}V_i(t)=\sigma_2 V_uH_i-\mu (V_u+V_i)V_i,\ \ \ & t>0,
\end{array}
\right.
\end{equation*}
is \eqref{reduced}. By Lemma \ref{lemma_persistence2}, there exists $\epsilon>0$ such that 
$$
\liminf_{t\rightarrow\infty} |H_i(t)|+|V_i(t)|\ge \epsilon. 
$$
Hence the omega limit set of \eqref{1.2} is contained in $W:= \{(H_{i0}, V_{i0})\in R^2_+:  \ H_{i0}+V_{i0}\neq 0\}$. By Lemma \ref{lemma_reducedC}, $W$ is the stable set of the stable equilibrium $(\hat H_i, \hat V_i)$ of \eqref{reduced}. By the theory of  asymptotic autonomous systems, we have $H_i(t)\rightarrow \hat H_i$ and $V_i(t)\rightarrow \hat V_i$ as $t\rightarrow\infty$. Moreover since $V_u(\cdot, t)+V_i(\cdot, t)\rightarrow  \beta/\mu=\hat V_u+\hat V_i$, we have $V_u(t)\rightarrow \hat V_u$ as $t\rightarrow\infty$. 
\end{proof}

\end{document}